\newtheorem{theorem}{Theorem}
\newtheorem{lemma}[theorem]{Lemma}
\newtheorem{proposition}[theorem]{Proposition}
\newtheorem{assumption}{Assumption}
\newtheorem{definition}{Definition}
\newtheorem{remark}{Remark}
\newtheorem{example}{Example}
\numberwithin{equation}{section}
\newcommand{\angstrom}{\mbox{\normalfont\AA}}
\def\N{\mathbb{N}}
\def\Z{\mathbb{Z}}
\def\R{\mathbb{R}}
\def\bbC{\mathbb{C}}
\def\bbS{\mathbb{S}}
\def\br{{\bm r}}
\def\bk{{\bm k}}
\def\bl{{\bm l}}
\def\bbm{{\bm m}}
\def\bz{{\bm z}}
\def\bmu{{\bm \mu}}
\def\bn{{\bm n}}
\def\bL{{\bm L}}
\def\calB{\mathcal{B}}
\def\calU{\mathcal{U}}
\def\calP{\mathcal{P}}
\def\calN{\mathcal{N}}
\def\calR{\mathcal{R}}
\def\calM{\mathcal{M}}
\def\calE{\mathcal{E}}
\def\calF{\mathcal{F}}
\def\calC{\mathcal{C}}
\def\calL{\mathcal{L}}
\def\intD{\bar{D}}
\def\diagD{{\bm D}}
\def\Ib{I_b}
\def\Asym{\mathcal{A}}
\def\rcut{r_{\rm cut}}
\def\ro{{r_0}}
\def\fcut{f_{\rm cut}}
\def\rnn{r_{\rm nn}}
\def\<{\langle}
\def\>{\rangle}
\def\b{\big}
\def\Spec{{\bf Spec}}
\def\SpecV{{\bf Spec}^V}
\newcommand{\clospan}[2]{\overline{\operatorname{span}}_{#2} #1}
\newcommand\blfootnote[1]{%
  \begingroup
  \renewcommand\thefootnote{}\footnote{#1}%
  \addtocounter{footnote}{-1}%
  \endgroup
}
\newcommand{\revi}[1]{{ #1}}
\newcommand{\revii}[1]{{ #1}}
\title{Atomic Cluster Expansion: Completeness, Efficiency and Stability}
\author{Genevieve Dusson}
\address{Laboratoire de Math\'ematiques de Besan\c{c}on, UMR CNRS 6623,
Universit\'e Bourgogne Franche-Comt\'e,
16 route de Gray,
25030 Besan\c{c}on, France}
\email{genevieve.dusson@math.cnrs.fr}
\author{Markus Bachmayr}
\address{Johannes Gutenberg-Universitat Mainz, Institut f\"ur Mathematik, Staudingerweg 9
55128 Mainz, Germany}
\author{Gabor Csanyi}
\address{Engineering Laboratory,
University of Cambridge,
Trumpington Street,
Cambridge, CB2 1PZ,
United Kingdom}
\author{Ralf Drautz}
\address{ICAMS, Ruhr-Universit\"{a}t Bochum, Universit\"{a}tsstr. 150, 44801 Bochum, Germany}
\email{ralf.drautz@rub.de}
\author{Simon Etter}
\address{Department of Mathematics
National University of Singapore}
\email{ettersi@nus.edu.sg}
\author{Cas van der Oord}
\address{Engineering Laboratory,
University of Cambridge,
Trumpington Street,
Cambridge, CB2 1PZ,
United Kingdom}
\author{Christoph Ortner}
\address{Mathematics Institute, University of Warwick, Coventry CV4 7AL, United Kingdom}
\email{c.ortner@warwick.ac.uk}
\date{\today}
\begin{document}

\begin{abstract}
    The {\em Atomic Cluster Expansion} (Drautz, Phys. Rev. B 99, 2019) provides a framework to systematically derive polynomial basis functions
    for approximating isometry and permutation invariant functions, particularly
    with an eye to modelling properties of atomistic systems. 
    \revi{
    Our presentation extends the derivation by proposing a precomputation algorithm that yields immediate guarantees that a complete basis is obtained.
    }
    We provide a fast recursive algorithm for efficient evaluation and illustrate its performance in numerical tests. Finally, we discuss generalisations and open challenges, particularly from a numerical stability perspective, around basis optimisation and parameter estimation, paving the way towards a comprehensive analysis of the
    convergence to a high-fidelity reference model.
  \end{abstract}

\maketitle

\blfootnote{
    {\it Acknowledgements:} This work was partially supported by the French ``Investissements d'Avenir" program, project ISITE-BFC (contract ANR-15-IDEX-0003). CO acknowledges funding by the Leverhulme trust, RPG-2017-191.
}

\setcounter{tocdepth}{1}

\section{Introduction}
Many functions and functionals of interest for different scientific  domains exhibit symmetries.
In this research note, we are targeting the approximation of functions that are invariant under isometry and permutations of its variables.
This is in particular a feature of many-particle models in physics, chemistry and materials science, e.g. potential energy \revii{surfaces and landscapes}, or Hamiltonians. There has been a long history of constructing accurate approximations of such functions, based on empirical modelling combined with parameter estimation and more recently machine learning techniques. While the proposed representations and fitting procedures can be very different, one characteristic usually remains: the symmetries of the function of interest are included in the representation, hence are exactly preserved by the approximation.

We are particularly interested in the approximation of \revii{the potential energy of materials and molecules}, for which several methods relying on permutation and isometry invariant representations have been proposed~\cite{Braams2009-wi, Behler2007-ng, Bartok2010-mv, Bartok13, Thompson2015-af, Eickenberg-17, Shapeev2016-pd, Drautz2019-er, hansen-2013, FCHL-18, vonl+15ijqc,deepmd-18}.
Many of these representations  employ spherical harmonics, which are indeed natural objects to describe rotations.
This is the case for the Smooth Overlap of Atomic Positions (SOAP)~\cite{Bartok13}, where a rotation- and permutation-invariant environment descriptor is constructed as the power spectrum of a spherical harmonics expansion of the atomic density, and then is employed in a Gaussian process regression (GAP)~\cite{Bartok2010-mv}.
The Spectral Neighbor Analysis Potential (SNAP)~\cite{Thompson2015-af} builds on a similar (extended) descriptor called the {\em bispectrum}~\cite{Bartok2010-mv,Bartok13}, but employs a linear regression scheme instead of a Gaussian process framework.

An alternative approach is to directly generate a basis spanning an approximation space that is invariant under rotation and permutation.
For example, the Permutation-Invariant Potentials (PIPs)~\cite{Braams2009-wi} and its extension to materials, ``atomic PIPs'' (aPIPs) ~\cite{2019-regapips1}, rely on computational invariant theory to construct a polynomial basis that is invariant under permutations and rotations.
Another very successful construction of interatomic potentials via symmetric polynomials are the Moment Tensor Potentials (MTPs) by Shapeev~\cite{Shapeev2016-pd}. The latter relies on tensor contractions to obtain isometry-invariance and the density projection pioneered in \cite{Behler2007-ng,Bartok2010-mv} to obtain permutation invariance.
Finally, the construction which we take as our starting point in this article is called Atomic Cluster Expansion (ACE) proposed by Drautz~\cite{Drautz2019-er}. This method reformulates and significantly extends the SOAP construction
to obtain a complete set of invariant polynomials where the angular component is described by spherical harmonics.

Our present work serves multiple purposes: first, it serves as an introduction, for a numerical analysis audience, to this emerging field, emphasizing a particularly \revii{elegant, general and systematic} approach based on symmetric polynomials exploiting the properties of spherical harmonics. (See also \S~\ref{app:othersym} for an example of the non-spherical case.)
\revi{Secondly, we present a more detailed derivation of~\cite{Drautz2019-er}, to critically examine implicit underlying assumptions, and to clarify why the ACE construction indeed generates a spanning set. We further provide a numerical algorithm that can be run offline to finalise the construction and obtain a {\em complete} basis. 
}
\revii{Shapeev~\cite{Shapeev2016-pd} has also shown that the MTP construction yields a spanning set, but obtaining a basis {\em offline} in that setting is more difficult. Instead, he removes linearly dependent basis functions through an $l_0$ sparsification (see Appendix~\ref{sec:mtps}). Both Shapeev's work and our current work serve as a starting point for a rigorous approximation error analysis that we are pursuing in a separate work to explain the exceptional performance of these methods.
}

We then discuss a wide range of modifications, extensions and optimisations, and highlight related open challenges.
For example, in \S~\ref{sec:graph_representation} we propose an efficient {\em recursive representation} of the ACE basis in terms of a directed acyclic graph which has a significantly reduced evaluation cost. 
In \S~\ref{sec:reg} we discuss the inverse problem associated with estimating the model parameters, briefly examining its well-posedness (or lack thereof) and the need for sophisticated regularisation techniques, all aimed towards the construction of {\em stable} regression schemes. 
We illustrate computational aspects of these issues numerically; in particular, we present convergence tests in \S~\ref{sec:convergence_tests} for two training sets (Silicon and Tungsten), demonstrating the systematic convergence, low computational cost of the method, and the speed-up afforded by the recursive representation.
In \S~\ref{sec:extensions} we review several extensions and open problems including natural generalisations to {\em nonlinear} regression schemes; ill-conditioning of the original ACE construction and how this might be reversed through an explicit orthogonalisation; regularisation; and the application of the cluster expansion method to different symmetries.
Finally, to provide some broader context we briefly review several closely related schemes in Appendix~\ref{app:otherbases}.

To conclude this introduction we emphasize that our note should primarily be seen as a first step towards a numerical analysis of symmetric polynomials in the context of fitting interatomic potentials or more generally properties of molecular systems. Applications of these methods to the development of practical interatomic potentials will be pursued in separate works.

\subsection{Notation}
We briefly summarize key notation used throughout this paper:
\begin{itemize}
    \item $\N = \{0, 1, \dots\}$ : natural numbers, beginning with zero
    \item $\clospan{B}{X}$ : closure of ${\rm span} B$ with respect to the norm $\|\cdot\|_X$
    \item $R = ( \br_j )_{j = 1}^J$ : collection of, possibly relative, particle positions, $\br_j \in \R^3$
    \item $\br = r \hat{\br}$, i.e., $r = |\br|$ and $\hat\br = \br/r$
    \item $B_s = \{ \br \in \R^3 \,\b|\, | \br | < s \}$, $B_s^{\rm c} = \R^3 \setminus B_s$
    \item $\calP = \{P_0, P_1, \dots\}$ : radial basis functions
    \item $Y_l^m$ : complex spherical harmonics
    \item $\bbS^{d}$ : unit sphere in $d+1$ dimensions
    \item $\delta_{a} := \delta_{a0}$ \revi{is a Kronecker delta}, e.g., used in the form $\delta_{a+b} = \delta_{a,-b}$.
    \item If $a$ is a vector indexed by an index set $I$ then we denote its entries by either $a_i$ or $[a]_i, i \in I$. We use the latter notation e.g. when $a$ is given by a more complex expression. Similarly, for a matrix $A$ indexed by $I$ we will denote its entries by $A_{ij} = [A]_{ij}, i, j \in I$. The index set need not be a set of integers.
    \item If ${\bf a} = (a_1, a_2, \ldots, a_N)$ is a tuple and $\sigma\in S_N$ is a permutation, we denote by $\sigma({\bf a})$ or $\sigma {\bf a}$ the tuple $(a_{\sigma(1)}, \ldots, a_{\sigma(N)})$.
    \item We will sometimes use the labels `PI' for {\em permutation invariant}, `RI' for {\em rotation invariant} and `RPI' for {\em rotation and permutation invariant}.
    \item If $A$ is a set then $\# A$ denotes the number of set elements (possibly infinite).
\end{itemize}

\section{Potential Energy Surfaces}
\label{sec:pes}
We are concerned with the approximation of atomistic potential energy surfaces (PES). For simplicity of presentation, we only consider finite single-species clusters but extensions to infinite and multi-species configurations are straightforward; see \revi{Section}~\ref{sec:app:multi}.

The set of {\em admissible finite configurations} is given by
\[
  \calR := \bigcup_{J = 0}^\infty \calR_J, \qquad
  \calR_J := \b\{ \{\br_1, \dots, \br_J\} \subset \R^3  \b\};
\]
i.e., a configuration $R \in \calR_J$ is a set consisting of $J$ particle positions.

A potential energy surface is a mapping
\[
   E : \calR \to \R
\]
which is invariant under isometries (permutation invariance is already implicit in identifying configurations as sets)\revii{, that is, for an isometry $Q \in O(3)$, for any $(\br_1, \dots, \br_J) \in \R^{3J}$,
\[
E(\{Q\br_1, \dots, Q\br_J\}) = E(\{\br_1, \dots, \br_J\})
\]
} %
and observes a certain
locality of interaction\revii{, in the sense that the energy is additive for configurations composed of far away subsystems.
In fact, we will make this statement more precise (and even more restrictive) in Assumption~\ref{as:VN} just below.}

All {\em interatomic potential} models make various (often {\it ad hoc})
assumptions on the PES regarding low-rank structures and locality of
interactions. In general, one aims to represent a complex fully many-body PES
$E$ (exactly or approximately) as a combination of ``simple'' components, e.g.,
low-dimensional or low-rank. Here, we shall {\em assume} that $E$ can be written
in the form of a body-order expansion,
\begin{equation} \label{eq:cluster expansion}
\begin{split}
   E(\revii{\{\br_1, \dots, \br_J\}}) &= \sum_{\iota = 1}^J E_\iota\b(\{\br_j-\br_\iota\}_{j\neq\iota}\b), \\
   E_\iota\big(\{\br_{\iota j}\}_{j = 1}^{J-1}\big) &= \revii{V_0} + \sum_{N = 1}^\calN
         \sum_{j_1 < j_2 < \dots < j_N} V_N\b(\br_{\iota j_1}, \dots, \br_{\iota j_N}\b),
 \end{split}
\end{equation}
\revii{with $\br_{\iota j} = \br_j-\br_\iota$, $V_0 \in\R$ and $\calN \in \N$ being the maximal order of interaction. 
The functions $E_\iota$ which depend on a center atom $\br_\iota$ and its environment are usually called site energy functions, and the $V_N$ functions which depend on $N$ differences to a center atom $\iota$ are called $N$-body functions. }
We will discuss uniqueness (or lack thereof) of such a representation in \S~\ref{sec:regularisation}. 

Because the body-order expansion is applied to \revii{the site energy which is} an atom-centered property we call it an {\em atomic body-order expansion.} The concrete representation proposed in \cite{Drautz2019-er} uses symmetrized correlations to express terms in \eqref{eq:cluster expansion} and, motivated by the cluster expansion formalism in alloy theory \cite{Sanchez1984-iw}, is called {\em atomic cluster expansion} (ACE). We will also use the ACE name for subsequent modifications that build on \cite{Drautz2019-er}, such as those proposed in \S~\ref{sec:extensions}.

We will call the parameter $N$ in a term of the form $V_N(\br_{\iota j_1}, \dots, \br_{\iota j_N})$ the {\em interaction order}, or simply {\em order}, of that term. It can be identified as a {\em body-order} if an $N+1$-particle function involving $N$ neighbours of a centre-atom is interpreted as having body-order $N$.

\revii{Note that the arguments in $V_N$ are taken as (vectorial) differences with respect to a center atom, but for simplicity, we still denote them by $R = (\br_1, \dots, \br_N) \in \R^{3N}$.
Moreover,} we treat the $V_N$ as functions on $\R^{3N}$ rather than on
configurations, which makes it easier to reason about them, e.g., their
regularity. On the other hand we then need to place assumptions on  $V_N$ to
retain important physical symmetries. \revii{For} $R = (\br_1, \dots, \br_N) \in
\R^{3N}$, $Q \in \R^{3 \times 3}$ and $\sigma \in S_N$ a permutation, then we will write
\[
    QR := (Q \br_j)_{j = 1}^J, \qquad \text{and} \qquad
    \sigma R := (\br_{\sigma j})_{j = 1}^J.
\]
\begin{assumption}\label{as:VN}
We assume throughout that the $V_N$ satisfy the following conditions:
\begin{enumerate} \renewcommand{\labelenumi}{(\roman{enumi})}
  \item {\em Regularity: } $V_N \in C^{t}( \R^{3N} \setminus \{0\} )$; where  $t \geq 1$. The regularity parameter $t$ is fixed throughout the paper.
  \smallskip
  \item {\em Locality: } There exists $\rcut > 0$ such that
      $V_N(R) = 0$ if $\revii{\max_{1\le j \le N}} \revii{|\br_j|} \geq \rcut$;
  \smallskip
  \item {\em Isometry Invariance: }  $V_N(QR) = V_N(R)$ for all $Q \in {\rm O}(3)$; and
  \smallskip
  \item {\em Permutation Invariance: } $V_N(\sigma R) = V_N(R)$ for all $\sigma \in S_N$.
\end{enumerate}
\end{assumption}

\begin{remark}
A rigorous justification of these assumptions requires in particular quantifying the error committed by truncation of the interaction range and order and goes well beyond the scope of this work. We can only briefly comment on each assumption in turn:

{\it (i) Regularity} and {\it (ii) Locality} has been explored for some simple electronic structure models in \cite{2015-qmtb1,2019-tbloc0T,Drautz06}.
For more sophisticated models it is not clear whether, or how, these assumptions may fail. However, it is likely that $C^{1}$ or higher regularity are only achievable in configurations where no two particles collide. This requires a weakening of~(i) and leads to substantial additional difficulties for approximation theory.

The symmetries {\it (iii, iv)} are entirely natural and likely universal.

Regarding the truncation of the interaction order, since we place no bounds on the maximal order $\calN$ it is reasonable to assume that a broad class of PESs can be represented within this assumption to within arbitrary accuracy.
Indeed, the expansion \eqref{eq:cluster expansion} is an implicit ingredient in many regression schemes for potential energy surfaces~\cite{Bartok2010-mv,Shapeev2016-pd,Thompson2015-af,Braams2009-wi}.
It was first suggested in~\cite{Glielmo2018-lk} that it may be advantageous to directly target ``learning'' the $N$-body terms $V_N$.
\end{remark}

In the remainder of this paper we will present a framework, building on
\cite{Drautz2019-er, Bartok2010-mv, Shapeev2016-pd}, how to construct a broad
class of polynomial approximations capable of representing any order-$N$
potential $V_N$, while respecting the symmetry and cutoff requirements, and
which moreover can be efficiently evaluated even at high interaction order $\calN$.
Note that the cost of evaluating \revii{a site energy $E_\iota$} scales naively as $\binom{J}{\calN}$,
but it was shown in \cite{Drautz2019-er, Bartok2010-mv, Shapeev2016-pd} how this
cost can be significantly reduced.

\subsection{Potential spaces}
Since $N$-body potentials typically have a singularity at $r = 0$, we will not study convergence up to collision but instead specify some minimal distance $\ro > 0$, which will be fixed throughout this paper and only consider interatomic neighbourhoods $\{\br_1, \dots, \br_J\}$ for which $r_j \geq \ro$ for all $j$. Thus, given $\ro > 0$, we define the domains
\[
    \Omega_\ro := \{ \br \in \R^3 \,|\, |\br| > \ro \},
    \quad \text{and} \quad
    \Omega_\ro^N =
    \big\{ (\br_1, \dots, \br_N) \in \R^{3N} \, |\,
            \min r_j > \ro \big\}.
\]
The associated function spaces, motivated also by Assumption~\ref{as:VN}, are
\begin{align*}
    C^{s}_{\rcut}(\ro, \infty)
    &:= \big\{ f \in C^{s}(\ro, \infty) \,|\, f = 0 \text{ in } [\rcut, \infty) \big\},
    \\
    C^{s}_{\rcut}(\Omega_\ro^N) &:=
    \big\{ f \in C^{s}(\Omega_\ro^N) \,|\, f(\br_1, \dots, \br_N) = 0 \text{ if } r_j \geq \rcut \text{ for any } j \big\},
\end{align*}
equipped with the standard norms $\| \cdot \|_{C^{s}(\Omega_\ro^N)}, s \geq 0$. When it is safe from the context to do so, we will drop the domains from the definitions of the spaces and norms.

\subsection{Approximation by tensor products}
\label{sec:pes:approx}
Informally for now, we approximate a multi-variate function
$
    f : \R^{3N} \to \R
$
(in particular $V_N$) using a tensor product basis
\[
    \phi_{\bn\bl\bbm}
    \b( \revi{\br_1, \br_2, \ldots, \br_N})
    := \prod_{j = 1}^N \phi_{n_j l_j m_j}(\br_j),
    \qquad
    \phi_{nlm}(\br) := P_{n}(r) Y_{l}^{m}(\hat\br),
\]
\revii{with $\bn = (n_1, n_2, \dots, n_N)$, and similarly for $\bl$ and $\bbm$, }
where $P_n, n = 0, 1, 2, \dots$ are the {\em radial basis functions} defined below, while $Y_l^m, l = 0, 1, 2, \dots; m = -l, \dots, l$ are the standard complex spherical harmonics; see Appendix~\ref{sec:app:SH} for a brief introduction. In particular, for $\bl \in \N^N$, we have the restriction
\[
    \bbm \in \calM_\bl :=
    \big\{
        \bmu \in \Z^N \,|\, -l_\alpha \leq \mu_\alpha \leq l_\alpha
    \big\}.
\]
The choice of spherical harmonics for the angular component is crucial and will later enable us to explicitly incorporate ${\rm O}(3)$-symmetry into the approximation.

On the other hand, there is significant freedom in the choice of radial basis, which we denote by
\[
    \mathcal{P} := \big\{ P_n(r) \,\b|\, n = 1, 2, \dots \big\}.
\]
We will make the following standing assumption.

\begin{assumption} \label{as:P}
   The set $\calP$ is a linearly independent subset of
   $C^{t}_{\rcut}([\ro,\infty))$,
   where $t$ is the parameter from Assumption~\ref{as:VN}. Moreover,
   \[
      \clospan{\calP}{C^t}
      \supset
      \big\{ f \in C^{\infty}([r_0, \infty)) \,\b|\, f = 0 \text{ \rm in }
            [\rcut, \infty) \big\},
   \]
\end{assumption}

Intuitively, Assumption~\ref{as:P} states that due to rotation invariance, any pair potential $V_1(\br) = V_1(r)$ may be approximated to within arbitrary accuracy from $\operatorname{ span} \calP$. As a consequence we obtain that any $V_N$ satisfying the requirements of Assumption~\ref{as:VN} can be approximated using the tensor product basis
\[
    \Phi_N :=
    \big\{
        \phi_{\bn\bl\bbm} \,\big|\, \bn, \bl \in \N^N, \bbm \in \calM_\bl
    \big\}.
\]

\begin{proposition} \label{th:approx_nonsym}
    Suppose that $V_N$ satisfies Assumption~\ref{as:VN}(i, ii),
    and $\calP$ satisfies Assumption~\ref{as:P}; then,
    $V_N|_{\Omega_{\ro}^N} \in \clospan{\Phi_N}{C^t}$.
\end{proposition}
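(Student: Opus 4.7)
The plan is to exploit the tensor-product structure of $\Phi_N$ by decoupling the approximation into an angular part, handled by spherical harmonics on each sphere $\bbS^2$, and a radial part, handled by the basis $\calP$. Concretely, I would expand $V_N$ jointly in the angular variables $\hat\br_1,\ldots,\hat\br_N$ with coefficients depending on the radii $r_j$, and then approximate each radial coefficient function by sums of tensor products of elements of $\calP$ via Assumption~\ref{as:P}.

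As a preparatory step I would replace $V_N$ by a $C^\infty$ approximant $g$ with compact support inside $\Omega_\ro^N \cap \{\max_j r_j < \rcut\}$ and $\|V_N - g\|_{C^t} < \epsilon$, which is possible by standard mollification combined with a small inward shift of the support in the $\ro$-direction. For such a smooth $g$, the tensor-product spherical harmonic expansion
\begin{equation*}
  g(\br_1,\ldots,\br_N) = \sum_{\bl \in \N^N}\sum_{\bbm \in \calM_\bl} g_{\bl\bbm}(r_1,\ldots,r_N)\prod_{j=1}^N Y_{l_j}^{m_j}(\hat\br_j),
\end{equation*}
with $g_{\bl\bbm}(r_1,\ldots,r_N) = \int_{(\bbS^2)^N} g\,\prod_j \overline{Y_{l_j}^{m_j}(\hat\br_j)}$, converges in $C^t$. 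Indeed, repeated integration by parts against the angular Laplace--Beltrami operator on each sphere yields super-polynomial decay of $g_{\bl\bbm}$ in $|\bl|$, which dominates the polynomial-in-$|\bl|$ growth of the $C^t$-norms of the $Y_l^m$. Truncating the sum in $\bl$ therefore gives an arbitrarily accurate $C^t$ approximation of $g$.

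For the radial step, each coefficient $g_{\bl\bbm}$ is a smooth function on $[\ro,\infty)^N$ vanishing outside a compact subset of $(\ro,\rcut)^N$. A multivariate Weierstrass approximation on the cube $[\ro,\rcut]^N$ followed by multiplication by a fixed smooth cutoff in each radial variable expresses $g_{\bl\bbm}$ as a $C^t$-limit of finite sums of tensor products $\sum_k \prod_j h_{jk}(r_j)$, with every $h_{jk}$ smooth and supported strictly inside $[\ro,\rcut)$. Assumption~\ref{as:P} then allows each univariate factor $h_{jk}$ to be approximated in $C^t$ by a linear combination of $P_n$'s; standard product estimates in $C^t$ turn these univariate approximations into an approximation of $\prod_j h_{jk}(r_j)$ and hence of $g_{\bl\bbm}$. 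Combining with the angular truncation assembles a finite linear combination of basis functions in $\Phi_N$ that approximates $V_N$ in $C^t$-norm to within the prescribed $\epsilon$.

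The main technical obstacle is securing $C^t$ (rather than merely $L^2$ or uniform) convergence of the spherical harmonic partial sums: this is the reason the preliminary mollification is essential, since one needs enough smoothness of $g$ to outweigh the polynomial growth of derivatives of $Y_l^m$ in $|\bl|$. A secondary subtlety is that the tensor-product density argument in the radial variables must produce factors supported strictly inside $[\ro,\rcut)$ so that Assumption~\ref{as:P} is applicable; this is achieved by multiplying the polynomial approximants by a fixed smooth bump in each variable, chosen once and for all to dominate the support of $V_N$.
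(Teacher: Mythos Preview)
Your strategy is sound and genuinely different from the paper's, but one step needs correction. You propose to shift the support inward from the $\ro$-boundary so that the mollified $g$ has compact support inside $\Omega_\ro^N$. This cannot work: $V_N$ need not vanish as $r_j \downarrow \ro$, so no function vanishing there can approximate it in $C^t(\Omega_\ro^N)$. The fix is easy and the rest of your argument is unaffected: drop the inward shift, extend $V_N$ slightly past $\ro$ (e.g.\ by Whitney), and mollify there to get $g \in C^\infty(\overline{\Omega}_\ro^N)$ vanishing only near $\rcut$. Assumption~\ref{as:P} requires the univariate radial factors to vanish on $[\rcut,\infty)$ but imposes nothing at $\ro$, so your radial cutoff $\kappa$ should simply be $1$ on $[\ro,\rcut-\delta]$ and $0$ on $[\rcut,\infty)$; then each $h_{jk}$ lies in the class covered by Assumption~\ref{as:P}.

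The paper takes a different and somewhat more elementary route. It first reduces to $N=1$, then approximates in \emph{Cartesian} coordinates by Bernstein polynomials (which gives $C^t$ convergence directly, after a Whitney extension to a cube). A polynomial $p(\br)$ admits an \emph{exact finite} decomposition $p(\br)=\sum_l r^l H_l(\hat\br)$ with each $H_l|_{\bbS^2}$ in ${\rm span}\{Y_l^m : |m|\le l\}$, so the spherical-harmonic part is obtained for free, with no analysis of series convergence or coefficient decay. After multiplying by a radial cutoff and invoking Assumption~\ref{as:P}, density of $\Phi_1$ follows; general $N$ is then handled by tensor-product density of $\bigotimes^N Y$ in $C^t(\overline{\Omega}_\ro^N)$. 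Your approach treats all $N$ at once and separates angular from radial approximation cleanly, at the price of the smoothness bookkeeping needed to push $C^t$ convergence of the truncated $Y_\bl^\bbm$ expansion; the paper trades that analysis for a purely algebraic observation about polynomials on the sphere and a reduction to one particle.
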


\begin{proof}
    We first show that $\Phi_1 = \{ \phi_{nlm} \}$ is dense with respect to the $C^t(\overline{\Omega}_{\ro})$-norm in
    \[ Y = \{ f \in C^{t}(\overline{\Omega}_{\ro}) \,\b|\, f = 0 \text{ in }
            B_{\rcut}^{\rm c} \big\} .  \]
    \revii{Denoting by $\operatorname{supp} f$ the support of a given function $f$, }
    it suffices to show that any $v \in \{ f \in C^\infty(\overline{\Omega}_{\ro}) \,\b|\, \operatorname{supp} f \subset B_{\rcut} \}$ can be approximated arbitrarily well by linear combinations of the basis functions, since this subset is dense in $Y$. Let $\kappa \in C^\infty(\overline{\Omega}_{\ro})$ be a radial function taking values in $[0,1]$ such that $\kappa = 1$ on $\operatorname{supp} v$ (that is, $\kappa  v = v$) and $\operatorname{supp} \kappa \subset \overline{B}_{\rcut}$.

    Let $\revii{\Ib} = [-b, b]^3$ with $b > \rcut$.
    By Whitney's extension theorem\revii{~\cite{Whitney1934-mf}}, there exists $\tilde v \in C^t(\revii{\Ib})$ such that $\tilde v|_{\revii{\Ib} \cap \overline{\Omega}_{\ro}} = v$. Note that approximation of $\tilde v$ by Bernstein polynomials on \revii{$\Ib$}, as in the corresponding proof of the Weierstrass approximation theorem, yields simultaneous approximation of $\tilde v$ and its continuous partial derivatives (see, e.g., \cite[Sec.~VI.6.3]{Davis}). Thus for any $\varepsilon>0$, we obtain a polynomial $p_\varepsilon$ such that
    \[
     \| v - p_\varepsilon \|_{C^t(\revii{\Ib} \cap \overline{\Omega}_{\ro})}
     \leq \| \tilde{v} - p_\varepsilon \|_{C^t(\revii{\Ib})}
     \leq \frac{\varepsilon}{ \| \kappa \|_{C^t}}.
     \]
     We can write $p_\varepsilon$ in the form
    \[
      p_\varepsilon(\br) = \sum_l r^l H_l(\hat\br),
    \]
    where $H_l$ is a homogeneous polynomial of degree $l$ for $l\geq 0$. Since $H_l|_{\mathbb{S}^2} \in \operatorname{span}\{ Y_l^m \colon m=-l,\ldots,l\}$ by \cite[Cor.~IV.2.2]{SteinWeiss}, we have univariate polynomials $q_l$ of degree $l$ such that
    \[
        p_\varepsilon(\br) = \sum_{l, m}  q_l(r) Y^m_l(\hat\br).
    \]
    Note next that one has the rough estimate
    \[
      \| v - \kappa p_\varepsilon \|_{C^t(\revii{\Ib} \cap \overline{\Omega}_{\ro})} = \| \kappa (v - p_\varepsilon) \|_{C^t( \revii{\Ib} \cap \overline{\Omega}_{\ro})} \leq \| \kappa \|_{C^t} \| v - p_\varepsilon \|_{C^t(\revii{\Ib}\cap \overline{\Omega}_{\ro})}  \leq \varepsilon.
    \]
    Thus finite sums of the form
    \[
     \kappa(r) p_\varepsilon(\br) = \sum_{l, m}  \kappa(r) \, q_l(r)\, Y^m_l(\hat\br),
    \]
    extended to $\mathbb{R}^3 \setminus \revii{\Ib}$ by zero, are dense in $Y$.
    By Assumption \ref{as:P}, the factors $\kappa  q_l \in C^\infty([\ro,\infty))$, which vanish on $[\rcut,\infty)$, can in turn be approximated to any accuracy in $C^t(\overline{\Omega}_\ro)$-norm. Altogether, this shows that $\Phi_1 = \{ \phi_{nlm} \}$ is dense in $Y$.

    As a consequence, the product basis $\Phi_N =\{ \phi_{\bn\bl\bbm} \}$ is dense in $\bigotimes^N_{i=1} Y$. This product space, which is a subspace of $C^t_{\mathrm{mix}} (\overline{\Omega}_{\ro}^N) = \bigotimes_{i=1}^N C^t(\overline{\Omega}_{\ro})$ (see, e.g., \cite[Ex.~4.80]{Hac12}), is in turn dense in $C^t(\overline{\Omega}_\ro^N)$. Since $V_N|_{\Omega_{\ro}^N} \in C^t(\Omega_\ro^N)$ by our assumptions, we arrive at the assertion.
\end{proof}

\section{Symmetric Polynomials}
\label{sec:symm}
We have seen in Proposition~\ref{th:approx_nonsym} that we can approximate an $N$-body function $V_N$ from the tensor product space ${\rm span}\{\phi_{\bn\bl\bbm}\}$. We will now show that (1) if $V_N$ satisfies the symmetries of Assumption~\ref{as:VN}(iii, iv) then the approximant may be assumed to inherit these symmetries without loss of accuracy; and (2) we will modify a construction from \cite{Drautz2019-er} to construct an explicit basis that respects these symmetries.

\revi{To motivate our presentation, which is somewhat different from \cite{Drautz2019-er}, we observe that there is a very general strategy to construct a basis respecting the desired symmetries at least in principle:
\begin{enumerate}[wide]
    \item Define the normalised Haar measure $H$ on the compact group \revii{ $G := \<S_N \cup {\rm O}(3)\>$} obtained by joining the permutation and isometry groups.
    \item Compute the symmetrised functions
    \[
        \phi_{\bn\bl\bbm}^{\rm sym}(R) := \int_{g \in G} \phi_{\bn\bl\bbm}(gR) \, H(dg)
    \]
    which may now be linearly dependent.
    \item Construct a basis of
    \[
        {\rm span}\big\{ \phi_{\bn\bl\bbm}^{\rm sym} \,|\,
                        \bn, \bl \in \N^N, \bbm \in \calM_\bl \big\}.
    \]
\end{enumerate}
Note that the symmetrised functions in (2) can be relatively easily constructed analytically while a basis of these symmetrised functions (that is, step (3)), is more difficult to obtain. In fact, this last part of the basis construction will be done numerically.
In what follows we will detail this construction when the symmetry group includes permutations, and isometries (cf. Assumption~\ref{as:VN}).
}
We will see that this leads to an explicit but computationally inefficient basis. We will then revisit a technique employed in \cite{Bartok2010-mv,Shapeev2016-pd,Drautz2019-er} to transform this basis into one that is {\em computationally efficient}. In preparation for this, we make the following observation:

Let $G$ be a group acting on $\R^d$, $\Omega \subset \R^d$ invariant under $G$\revi{,} and $X$ a subspace of functions $f : \Omega \to \R$ such that $f \circ g = f$ for all $g \in G$. Further, let a norm $\|\cdot\| : X \to \R$ be invariant under $G$, i.e.,
\[
    \|f\| = \|f \circ g\| \qquad \text{for all } g \in G.
\]
Next, let $\tilde{f} \in  X$ be some approximation to $f$, not necessarily  respecting the $G$ symmetries, and let
\[
    \tilde{f}^{\rm sym} := \int_G \tilde{f} \circ g \, H(dg)
\]
be the relevant symmetrised function then, since $f$ is invariant under $g \in G$ we have
\begin{align}
    \notag
    \big\| f - \tilde{f}^{\rm sym} \big\|
    &=
    \bigg\| \int_G (f - \tilde{f}) \circ g \, H(dg) \bigg\| \\
    \notag
    &\leq \int_G \| (f - \tilde{f}) \circ g \| \,H(dg) \\
    \label{eq:symmetrised approximation}
    &= \int_G \| f - \tilde{f} \| H(dg) =  \| f - \tilde{f} \|.
\end{align}
That is, the approximation error committed  by  $\tilde{f}^{\rm sym}$ is no larger than that of $\tilde{f}$. In the following sections we will use this observation implicitly several times.

\subsection{Permutation invariance}
\label{sec:symm:perm}
In addition to the requirements of regularity and locality encoded in $V_N \in C^t_{\rcut}(\Omega_\ro^N)$, let us now also assume that $V_N$ is permutation-invariant, i.e., that it satisfies Assumption~\ref{as:VN}(i, ii, iv).  Let $\tilde{V}_{N} \in {\rm span}\Phi_N$ be an approximation to $V_N$, and denote the symmetrised approximation by
\[
    \tilde{V}_N^{\rm perm}(R)
    :=
    \frac{1}{N!} \sum_{\sigma \in S_N} \tilde{V}_N(\sigma R).
\]
which we know from \eqref{eq:symmetrised approximation} is {\em at least as
accurate} as the original approximation $\tilde{V}_N$. We therefore assume from
now on that $\tilde{V}_N = \tilde{V}_N^{\rm perm}$, i.e., it is already
permutation symmetric.

Writing
\[
    \tilde{V}_N = \sum_{\bn, \bl, \bbm} c_{\bn\bl\bbm} \phi_{\bn\bl\bbm},
\]
then the linear independence of the $\phi_{\bn\bl\bbm}$ and the permutation symmetry $\tilde{V}_N = \tilde{V}_N \circ \sigma$ implies that
$
    c_{\bn\bl\bbm} = c_{\sigma\bn,\sigma\bl,\sigma\bbm}.
$
We can therefore alternatively write
\begin{equation} \label{eq:tilVN_perminv}
    \tilde{V}_N
    =
    \sum_{(\bn, \bl, \bbm)~{\rm ordered}}
    c_{\bn\bl\bbm} \sum_{\sigma \in S_N} \phi_{\bn\bl\bbm} \circ \sigma,
\end{equation}
with possibly {\em different} coefficients $c_{\bn\bl\bbm}$. By $\sum_{(\bn, \bl, \bbm)~{\rm ordered}}$ we mean that we sum over all {\em ordered} triples of tuples $(\bn, \bl, \bbm)$, according to the following definition, which we adopt throughout.

\begin{definition}
    We say that a tuple $({\bm a}^{(p)})_{p=1}^P \in (\Z^N)^P$ is {\em ordered} if the vector of tuples
    \[
        \Big( (a^{(p)}_1)_{p  = 1}^P, (a^{(p)}_2)_{p  = 1}^P,
             \dots, (a^{(p)}_N)_{p  = 1}^P  \Big)
    \]
    is lexicographically ordered.
\end{definition}

We emphasize  that any total ordering convention can be used, but we have found lexicographical ordering particularly convenient and intuitive.

\subsection{Invariance under point reflections}
\label{sec:symm:refl}
Next, we add point reflection symmetry to our approximation; that is, we assume that the potential $V_N$ satisfies
\[
    V_N(R) = V_N(-R), \quad \text{or, equivalently,} \quad
    V_N = V_N \circ J,
\]
where $J R = - R$. Combined rotation and point reflection symmetry are equivalent to ${\rm O}(3)$, i.e., symmetry under all isometries. Treating the point reflection case separately allows us an elementary demonstration how imposing invariance under isometries on the approximation can further reduce the number of admissible basis functions. We assume again without loss of accuracy that our approximation $\tilde{V}_N$ inherits the reflection symmetry.

Recalling from \eqref{eq:app:inv_Ylm} that $Y_l^m \circ J = (-1)^l Y_l^m$, which implies $\phi_{nlm} \circ J = (-1)^l \phi_{nlm}$  we can therefore write
\begin{align*}
    \tilde{V}_N(R)
    &=
    \sum_{\bn, \bl, \bbm} c_{\bn\bl\bbm} {\textstyle \frac12} \big(\phi_{\bn\bl\bbm} + \phi_{\bn\bl\bbm} \circ J \big)
    \\
    &=
    \sum_{\bn, \bl, \bbm} c_{\bn\bl\bbm} {\textstyle \frac12}(1 + (-1)^{\sum \bl}) \phi_{\bn\bl\bbm}.
\end{align*}
Thus, all basis functions $\phi_{\bn\bl\bbm}$ for which $\sum \bl$ is odd, vanish under this operation. That is, we only need to retain $(\bn, \bl, \bbm)$ tuples for which $\sum \bl$ is even. The resulting basis functions already respect reflection symmetry.

In summary, we have so far shown that we can approximate $V_N$ by symmetrised tensor products of the form
\begin{equation} \label{eq:start_rot}
    \tilde{V}_N
    =
    \sum_{\substack{(\bn, \bl, \bbm) {\rm ordered} \\
                    \sum \bl \text{ even}}}
    c_{\bn\bl\bbm} \sum_{\sigma \in S_N} \phi_{\bn\bl\bbm} \circ \sigma.
\end{equation}

\subsection{Rotation invariance: numerical construction}
\label{sec:symm:rot}
Finally, suppose that $V_N$ satisfies all the conditions of Assumption~\ref{as:VN}, including now also the rotation invariance, inherited by $\tilde{V}_N$. We may therefore rewrite it as
\[
    \tilde{V}_N
    =
    \sum_{\substack{(\bn, \bl, \bbm) {\rm ordered} \\
                    \sum \bl \text{ even}}}
    c_{\bn\bl\bbm} \sum_{\sigma \in S_N} \int_{\rm SO(3)}
    \big(\phi_{\bn\bl\bbm} \circ \sigma\big)(Q R) \,dQ,
\]
where the Haar integral $\int_{\rm SO(3)}$ is defined in Appendix~\ref{sec:app:SH}.

For a practical implementation it is necessary that we evaluate the integral over ${\rm SO}(3)$ explicitly. This is the key step where the spherical harmonics enter. Recall first that
\[
    \phi_{\bn\bl\bbm}(\br_1, \dots, \br_N)
    = \prod_{\alpha = 1}^N P_{n_\alpha}(r_\alpha) Y_{l_\alpha}^{m_\alpha}(\hat\br_\alpha),
\]
that is we only need to perform the integration over products of
$Y_{l_\alpha}^{m_\alpha}$ but can ignore the radial components, which are  already rotation-invariant.

\revii{Products of spherical harmonics arise in the coupling of angular momenta in the quantum mechanics literature, where they have been discussed extensively \cite{Drautz2019-er, Yutsis1962-yc, Brink68,Varshalovich88}. The product of several spherical harmonics can be expressed as linear combinations of single spherical harmonics, with the help of the generalized Clebsch--Gordan coefficients; see Appendix~\ref{sec:app:SH} for a brief summary.
Thus, rotationally invariant basis functions are formed by specific linear combinations of these products of spherical harmonics, 
which, when expressed as linear combinations of single spherical harmonics $Y_l^m$ involve only the rotationally-invariant spherical harmonic $Y_0^0$. 
In other words, the multiple angular momenta need to couple to zero total angular momentum.}

{We will first present a semi-numerical construction \revii{of a rotation-invariant basis}, which is an immediate consequence of properties of the spherical harmonics, and which is straightforward to implement and to generalise.
In the next section, we will then review an explicit algebraic construction based on reductions of the rotation group \cite{Yutsis1962-yc}, which is also the basis of~\cite{Drautz2019-er}. Both approaches are suitable for a practical implementation.}

Fix $\bl \in \N^N, \bbm \in \calM_\bl$ and \revii{$\hat R = (\hat\br_1, \dots, \hat\br_N)$}. The representation of rotated spherical harmonics in terms of the Wigner D-matrices,
\begin{equation}
    \label{eq:multi-wignerD-SH}
     Y_l^m(Q \revii{\hat\br}) = \sum_{\mu = -l}^l D^{l}_{\mu m}(Q) Y_l^\mu(\revii{\hat\br})
    \qquad \forall \revii{\hat\br \in \bbS^2}, Q \in {\rm SO}(3)
\end{equation}
(see \eqref{eq:app:wigner_D_matrix} for more details) immediately yields
\begin{align} 
    \notag
    Y_{\bl}^{\bbm}(Q \hat R) \;
    &= \sum_{\bmu \in \calM_\bl}
    \revii{D^\bl_{\bmu\bbm}(Q)} Y_\bl^\bmu(\hat R)
    \qquad \text{for all $Q \in SO(3)$,} \quad \text{where} \\
    \label{eq:def_Dmatrix}
\revii{D^\bl_{\bmu\bbm}(Q)} &= \prod_{\alpha = 1}^N  D_{\mu_\alpha m_\alpha}^{l_\alpha}(Q).
\end{align}
\revii{Note that with this definition of $D^\bl_{\bmu\bbm}(Q)$, one has to consider $(D^\bl_{\bmu\bbm}(Q))^T$ to mimic a matrix-vector product in~\eqref{eq:multi-wignerD-SH}.
}
Integrating with respect to $Q$ yields a rotation-invariant {\em  spanning set} $\{  b_{\bl\bbm}\}$ defined by
\begin{align}
    \notag
    b_{\bl\bbm}(\hat R) &:=
    \sum_{\bmu \in \calM_\bl} \revii{\intD^\bl_{\bmu\bbm}} Y_{\bl}^{\bmu}(\hat R),
    \quad \text{where} \\
    \label{eq:defn_intD}
    \revii{\intD^\bl_{\bmu\bbm}} &= \int_{\rm SO(3)} \revii{D^\bl_{\bmu\bbm}(Q)} \,  dQ.
\end{align}
\revii{These integrated coefficients $\revii{\intD^\bl_{\bmu\bbm}}$ can be efficiently computed via the recursion formula~\eqref{eq:Drecursion_appendix} involving Clebsch--Gordan coefficients presented in Appendix~\ref{sec:app:Dl_recursion}.
}

The following Lemma shows how to convert this spanning set into a {\em basis}.

\begin{lemma}
    \label{th:symm:orth_RI_basis}
    Suppose that $\tilde\calU^\bl_i = \revii{(\tilde\calU^\bl_{\bmu i}})_{\bmu \in \calM_\bl}$, $i = 1, \dots, \tilde{n}_\bl$ \revi{with $\tilde{n}_\bl = {\rm rank} \intD^\bl$} are orthonormal, i.e., $(\tilde\calU^\bl_i)^* \tilde\calU^\bl_{i'} = \delta_{ii'}$ and
    $\revii{ {\rm range} [\tilde\calU^\bl] = {\rm range} [\intD^\bl]}$, then the functions
    \[
        b_{\bl i} \revii{(\hat R)} := \sum_{\bmu \in \calM_\bl} \revii{ \tilde\calU^\bl_{\bmu i}} Y_\bl^\bmu(\revii{\hat R}),
        \qquad i = 1, \dots, \tilde{n}_\bl
    \]
    form an orthonormal basis of ${\rm span} \{ b_{\bl\bbm} \,|\, \bbm \in \calM_\bl \}$.

    In particular, we have that
    \[
        \big\{ b_{\bl i} \,\big|\, \bl \in \N^N, i = 1, \dots, \tilde{n}_\bl \big\}
    \]
    is an orthonormal basis of $\{ f \in L^2( (\bbS^2)^N) \,|\, f \text{ is rotation-invariant} \}$.
\end{lemma}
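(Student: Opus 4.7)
The plan is to split the proof into two parts: within a fixed $\bl$ block the argument is pure linear algebra, and across different blocks one uses that integration against the Haar measure on $\mathrm{SO}(3)$ defines an orthogonal projection of $L^2((\bbS^2)^N)$ onto its rotation-invariant subspace $\mathcal{H}$.

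For fixed $\bl \in \N^N$, I would identify each element of $\operatorname{span}\{Y_\bl^\bmu : \bmu \in \calM_\bl\}$ with its coefficient vector in $\bbC^{\calM_\bl}$; since the tensor-product spherical harmonics are $L^2$-orthonormal, this identification is an isometry of inner-product spaces. Under it, $b_{\bl\bbm}$ corresponds to the $\bbm$-th column of $\intD^\bl$ and $b_{\bl i}$ corresponds to $\tilde\calU^\bl_i$, so the hypothesis $\operatorname{range}[\tilde\calU^\bl] = \operatorname{range}[\intD^\bl]$, combined with orthonormality of the $\tilde\calU^\bl_i$, immediately yields the first conclusion of the lemma.

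For the global statement I would introduce the averaging operator
\[
    (Pf)(\hat R) := \int_{\mathrm{SO}(3)} f(Q\hat R)\, dQ,
\]
and verify the standard facts that $P$ is a bounded self-adjoint idempotent on $L^2((\bbS^2)^N)$ whose range is exactly $\mathcal{H}$ (using left-invariance of the Haar measure together with Cauchy--Schwarz). Inserting the Wigner expansion \eqref{eq:multi-wignerD-SH} into the definition of $P$ and recalling \eqref{eq:defn_intD} yields the key identity $P Y_\bl^\bbm = b_{\bl\bbm}$; in particular every $b_{\bl i}$ lies in $\mathcal{H}$. Orthonormality of the full collection then follows from the fixed-$\bl$ orthonormality of the previous paragraph together with the observation that $\operatorname{span}\{Y_\bl^\bmu\}_\bmu \perp \operatorname{span}\{Y_{\bl'}^{\bmu'}\}_{\bmu'}$ whenever $\bl \neq \bl'$.

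For completeness, given $f \in \mathcal{H}$ I would expand $f = \sum_{\bl, \bbm} c_{\bl\bbm} Y_\bl^\bbm$ in $L^2$, apply $P$ using $Pf = f$ and $L^2$-boundedness, and obtain
\[
    f = P f = \sum_{\bl,\bbm} c_{\bl\bbm}\, P Y_\bl^\bbm = \sum_{\bl,\bbm} c_{\bl\bbm}\, b_{\bl\bbm},
\]
whose inner $\bbm$-sum at each fixed $\bl$ lies in $\operatorname{span}\{b_{\bl i}\}_i$ by the first part. The main obstacle I anticipate is this last step, specifically the justification of interchanging $P$ with the convergent $L^2$ series and the verification that $P$ indeed orthogonally projects onto $\mathcal{H}$; both are standard but constitute the only genuinely analytic ingredients in an otherwise algebraic argument.
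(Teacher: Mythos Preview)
Your proposal is correct and, for the first assertion, essentially identical to the paper's proof: the paper also uses orthonormality of the $Y_\bl^\bmu$ to transfer $(\tilde\calU^\bl_i)^*\tilde\calU^\bl_{i'}=\delta_{ii'}$ to $\langle b_{\bl i},b_{\bl i'}\rangle=\delta_{ii'}$, and then invokes the range condition to write each $b_{\bl\bbm}$ in $\operatorname{span}\{b_{\bl i}\}$. For the global statement the paper simply declares it ``an immediate consequence,'' whereas you spell out the natural argument via the Haar averaging projection $P$; this is not a different route but rather the expected expansion of that one-line remark, and your anticipated ``obstacles'' (boundedness of $P$, interchange with $L^2$ sums) are indeed routine.
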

\begin{proof}
    Using the fact that the tensor products $Y_\bl^\bbm$ are orthonormal, we
    have
    \[
        \< b_{\bl i}, b_{\bl i'} \>
        =
        \sum_{\bmu, \bmu'} \tilde\calU^\bl_{\bmu i} \big(\tilde\calU^\bl_{\bmu' i'}\big)^* 
        \underset{ = \delta_{\bmu\bmu'}}{
            \underbrace{\< Y_\bl^{\bmu}, Y_{\bl}^{\bmu'} \>}
        }
        =
        \sum_{\bmu} \tilde\calU^\bl_{\bmu i} \big(\tilde\calU^\bl_{\bmu i'}\big)^*
        = \delta_{ii'}.
    \]
    This establishes orthonormality.

    To see that the basis is complete, we note that the range condition implies that $\revii{\intD^\bl_{\bmu\bbm} = \sum_i \alpha_i \tilde\calU^\bl_{\bmu i}}$, for some $(\alpha_i)$, and hence
    \[
        b_{\bl\bbm} = \sum_{\bmu} \revii{\intD^\bl_{\bmu\bbm}} Y_{\bl}^{\bmu}
        = \sum_i \alpha_i \sum_{\bmu} \tilde\calU^\bl_{\bmu i}Y_{\bl}^{\bmu},
    \]
    that is, $b_{\bl\bbm} \in {\rm span} \{ b_{\bl i} : i = 1, \dots, \tilde{n}_\bl \}$.

    The second statement is an immediate consequence.
\end{proof}

In \revii{practice} we can obtain the new coefficients $\tilde\calU^\bl$ via \revi{a singular value decomposition (SVD)}, which also provides a numerically stable estimate of the rank of $\intD^\bl$. The results of the next section show that $b_{\bl\bbm} = 0$ unless $\sum \bbm = 0$, that is, the SVD can be performed on a much smaller matrix in practice.

\subsection{Rotation symmetry: explicit construction}
\label{sec:rotinv:Drautz}
We now show that the numerical construction in the previous section corresponds to a {classical explicit SVD that is built on a reduction of the rotation group~\cite{Drautz2019-er, Yutsis1962-yc, Brink68, Varshalovich88}.} Here, we will include some additional details so we can clearly observe orthogonality of  the  rotation-invariant basis functions, as well as estimate the number of basis functions. In particular the latter is a key advantage of this alternative, but we will see that it still leads to some difficult challenges.

In the following, we review \revii{the argument that rotationally invariant basis functions are formed with specific linear combinations of products of spherical harmonics coupling to a zero total angular momentum} purely in the language of linear algebra. By explicitly constructing the SVD mentioned at the end of the previous section we show that we can explicitly build all linear combinations of products of spherical harmonics that are rotationally invariant, and this way generate a basis of rotation-invariant functions.

   \begin{lemma} \label{th:lemma_gencg_tildeD}
    For $N\ge 2$ and $\bl \in \N^N,$ let
    \begin{align*}
      \calL_\bl = \bigg\{ \bL = (L_2,L_3,\ldots,L_N)\in \N^{N-1} \, \bigg|
      \quad &
      |l_1-l_2| \le L_2 \le l_1+l_2, \\
      & \forall 3\le i\le N, \;
      |L_{i-1}-l_i| \le L_i \le {L_{i-1}+l_i}
      \bigg\},
      \end{align*}
      then the following statements are true:

      (i) Let $\revii{\bmu,\bbm} \in \calM_\bl$, then
      \begin{equation}
         \revii{D^\bl_{\bmu\bbm}(Q) =
         [\calC_\bl]_{\bmu,(\bL,M_N)} \;
         \diagD^\bl(Q) \;
         [\calC_\bl]^T_{\bbm,(\bL,M_N)},}
         \label{eq:DLmmuQ}
      \end{equation}
      with the generalized Clebsch-Gordan coefficients $\calC_\bl$ and the $\diagD^\bl(Q)$ defined as follows:
      \begin{align}
        \label{eq:Cmatrix}
         [\calC_\bl]_{\bbm,(\bL,M_N)} &=
         C_{l_1m_1l_2m_2}^{L_2M_2}C_{L_2M_2l_3m_3}^{L_3M_3} \ldots
         C_{L_{N-1}M_{N-1}l_Nm_N}^{L_NM_N}, \quad \text{ where}\\
         \notag
         \qquad \bL &= (L_2,\ldots,L_N) \in \calL_\bl,\quad  -L_N \le M_N \le L_N, \quad M_i = \sum_{j=1}^i m_i; \\
         \notag
        \text{and} \qquad \diagD^\bl(Q) &= {\rm diag} \left\{D^{L_N}(Q), \quad \bL = (L_2,L_3,\ldots,L_N) \in \calL_\bl \right\}.
    \end{align}

   (ii) Moreover,
   \begin{equation}
      \label{eq:DLmmuQ_integrated}
      \int_{SO(3)}\diagD^\bl(Q) \; dQ = {\rm diag}\big( \delta_{L_N} \mathbf{1} \big)_{\bL = (L_2,L_3,\ldots,L_N) \in \calL_\bl}.
   \end{equation}
\end{lemma}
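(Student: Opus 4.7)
My plan is to prove (i) by induction on $N$, bootstrapping from the two-factor Clebsch--Gordan decomposition of products of Wigner D-matrices, and to prove (ii) by applying the standard Haar-orthogonality of D-matrices block by block.

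For the base case $N=2$ of (i), I would invoke the classical identity
\[
D^{l_1}_{\mu_1 m_1}(Q) \, D^{l_2}_{\mu_2 m_2}(Q)
= \sum_{L_2 = |l_1 - l_2|}^{l_1 + l_2} \sum_{M_2, M_2'}
C^{L_2 M_2'}_{l_1 \mu_1\, l_2 \mu_2} \, D^{L_2}_{M_2' M_2}(Q) \, C^{L_2 M_2}_{l_1 m_1\, l_2 m_2},
\]
which is exactly the factored form \eqref{eq:DLmmuQ}; the constraints $M_2 = m_1 + m_2$ and $M_2' = \mu_1 + \mu_2$ are automatically enforced by the selection rules for the Clebsch--Gordan coefficients. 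For the inductive step from $N-1$ to $N$, I would multiply the assumed decomposition by $D^{l_N}_{\mu_N m_N}(Q)$, apply the base identity once more to the central block product $D^{L_{N-1}}_{M_{N-1}' M_{N-1}}(Q) \, D^{l_N}_{\mu_N m_N}(Q)$, and introduce a new summation index $L_N$ ranging over $\{|L_{N-1} - l_N|, \ldots, L_{N-1} + l_N\}$. Collecting the resulting chain of Clebsch--Gordan factors reproduces the nested product defining $[\calC_\bl]_{\bmu,(\bL,M_N')}$ in \eqref{eq:Cmatrix} and appends precisely the triangle condition needed to extend $\calL_\bl$ from the $(N-1)$-level to the $N$-level set.

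For (ii), I would invoke the standard Haar orthogonality
\[
\int_{SO(3)} D^L_{M' M}(Q) \, dQ = \delta_{L,0}\,\delta_{M',0}\,\delta_{M,0},
\]
which is summarised in Appendix~\ref{sec:app:SH}. Since $\diagD^\bl(Q)$ is block-diagonal with one block $D^{L_N}(Q)$ of size $(2L_N + 1) \times (2L_N + 1)$ for each $\bL = (L_2, \ldots, L_N) \in \calL_\bl$, each block integrates to the zero matrix unless $L_N = 0$, in which case the block collapses to the scalar $1$. This yields \eqref{eq:DLmmuQ_integrated} immediately.

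I expect the main difficulty to lie not in any hard estimate but in the combinatorial bookkeeping for (i): one must verify that the iteratively generated sequence $(L_2, L_3, \ldots, L_N)$ traces out exactly the set $\calL_\bl$, that the running labels $M_i = \sum_{j \le i} m_j$ and $M_i' = \sum_{j \le i} \mu_j$ propagate consistently through the induction, and that the induced product of Clebsch--Gordan coefficients assembles into the specific left-nested ordering of \eqref{eq:Cmatrix} rather than one of the recoupling-equivalent alternatives related by $6j$-symbols. Once this alignment is in place, the rest is mechanical substitution, and the factorisation \eqref{eq:DLmmuQ} can be read off as an honest matrix identity $\calC_\bl \, \diagD^\bl(Q) \, \calC_\bl^T$ with the unitarity of $\calC_\bl$ inherited from the unitarity of each two-factor Clebsch--Gordan block used in the induction.
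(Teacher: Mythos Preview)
Your proposal is correct and takes essentially the same approach as the paper: the paper also derives (i) by iterating the two-factor product formula for Wigner D-matrices $(N-1)$ times (this is exactly the recursion \eqref{eq:app:Dlkm-recursion} leading to \eqref{eq:app:recursion_Dmu_full} in the appendix), and obtains (ii) from the Haar integral identity \eqref{eq:app:wigner_D_matrix}. Your write-up is in fact more explicit about the inductive bookkeeping than the paper's one-line proof.
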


\revi{
\begin{example}
For $\bl = (1,1,2),$ 
$
    \calL_\bl = \{ 
        (1,1), (2,0), (2,1), (2,2), (2,3)
    \},
$
and $\diagD^\bl(Q)$ is the block diagonal matrix 
\[
    \diagD^\bl(Q) = 
    \begin{pmatrix}
    D^1(Q) & 0 & 0 & 0 & 0 \\
    0 & D^0(Q) & 0 & 0 & 0\\
    0 & 0 & D^1(Q) & 0 & 0\\
    0 & 0 & 0 & D^2(Q) & 0 \\
    0 & 0 & 0 & 0 & D^3(Q) \\
    \end{pmatrix},
\]  
where the zeros have to be understood as zero matrices of matching size, and the Wigner matrices $D^i(Q)$ are of size $(2i+1)\times (2i+1)$. 
Moreover, 
\[
    \int_{SO(3)}\diagD^\bl(Q) \; dQ = 
    \begin{pmatrix}
    0_{3,3} & 0 & 0 & 0 & 0 \\
    0 & 1 & 0 & 0 & 0\\
    0 & 0 & 0_{3,3} & 0 & 0\\
    0 & 0 & 0 & 0_{5,5} & 0 \\
    0 & 0 & 0 & 0 & 0_{7,7} \\
    \end{pmatrix},
\]  
where the sizes of the block matrices are only made specific on the diagonal. Then one can easily deduce that $\intD^{(1,1,2)}_{\bmu\bbm}$ defined in~\eqref{eq:defn_intD} is of rank one.
\end{example}
}

\begin{remark}
   From the  representation theory of $SO(3)$ (see e.g. \cite{Yutsis1962-yc}) it follows that
   \[
      \displaystyle \prod_{i=1}^N(2l_i+1)
   =  \sum_{L_2 = |l_1-l_2|}^{l_1+l_2} \sum_{L_3 = |L_2-l_3|}^{L_2+l_3}\hspace{-.2cm} \ldots
   \hspace{-.2cm}
   \sum_{L_N = |L_{N-1}-l_N|}^{L_{N+1}+l_N} (2L_N+1),
   \]
   hence the dimensions of $D^\bl(Q)$ and $\diagD^\bl(Q)$ match.
\end{remark}

\begin{proof}
   (i) The statement \eqref{eq:DLmmuQ} directly follows from \eqref{eq:app:recursion_Dmu_full}, which uses the recursion formula on the product of Wigner-D matrices \eqref{eq:app:Dlkm-recursion}.

   (ii)
   Equation \eqref{eq:DLmmuQ_integrated} can be obtained using property \eqref{eq:app:wigner_D_matrix} of the Wigner-D matrices.
\end{proof}

The expression obtained in \eqref{eq:DLmmuQ_integrated} suggests defining the following subset of $\calL_\bl$.
\begin{equation}
   \calL_\bl^0 = \big\{ \bL \in \calL_\bl \,\big| \, L_N = 0
   \big\}.
   \label{eq:ll0def}
\end{equation}
To proceed we recall the definition of \revii{$\intD^\bl_{\bmu\bbm}$} from \eqref{eq:defn_intD}; and moreover define
\begin{align*}
    \calM^0_\bl &:= \big\{ \bbm \in \calM_\bl \,\big|\, {\textstyle \sum} \bbm = 0 \big\},
\end{align*}

We can now expose structures in the generalized Clebsch--Gordan coefficients, defined in~\eqref{eq:Cmatrix}, and the generalized Wigner-D matrices defined in~\eqref{eq:def_Dmatrix}.

\begin{lemma} \label{lem:CG_coef_properties}
   (i)  If $\bL \in \calL_\bl^0$ and $\sum \bbm \neq 0$, then
   \revii{$[\calC_\bl]_{\bbm,(\bL,\sum \bbm)} = 0$}.

   (ii)  For all $\bL,\bL' \in \calL_\bl^0$, we have
   $   \displaystyle\sum_{\bbm \in \calM_\bl}
   [\calC_\bl]_{\bbm,(\bL,0)}^T
   [\calC_\bl]_{\bbm,(\bL',0)} = \delta_{\bL,\bL'}.$

   (iii) For all $\bbm,\bmu\in \calM^0_\bl,$ we have
   $
   \revii{\intD^\bl_{\bmu\bbm}
      = \sum_{\bL \in \calL_\bl^0}
      [\calC_\bl]_{\bmu,(\bL,0)}  [\calC_\bl]^T_{\bbm,(\bL,0)}.}
   $

   (iv) If $\sum \bbm \neq 0$ or  $\sum \bmu \neq 0$, then
   $\revii{\intD^\bl_{\bmu\bbm}} = 0$.
\end{lemma}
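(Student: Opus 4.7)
The plan is to base all four claims on three ingredients already at hand: the selection rule $C^{JM}_{j_1m_1j_2m_2} = 0$ unless $M = m_1+m_2$ and $|M|\le J$ for ordinary Clebsch--Gordan coefficients; the orthogonality of the generalized matrix $\calC_\bl$, which follows from Lemma~\ref{th:lemma_gencg_tildeD}(i); and the Haar integral identity already encapsulated in~\eqref{eq:DLmmuQ_integrated}. Everything then unfolds by direct computation.

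For (i), I would chain the first half of the selection rule through the recursive definition~\eqref{eq:Cmatrix}, which forces $M_i = M_{i-1}+m_i$ in every surviving term and hence $M_i = \sum_{j\le i}m_j$; in particular $M_N = \sum\bbm$. When $\bL\in\calL_\bl^0$ one has $L_N = 0$, so the final factor $C^{0,\,\sum\bbm}_{L_{N-1}M_{N-1}l_Nm_N}$ must satisfy $|\sum\bbm|\le 0$ by the second half of the selection rule; this fails whenever $\sum\bbm\ne 0$, giving the claimed vanishing.

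For (ii), I would argue that $\calC_\bl$ is a real orthogonal matrix. By Lemma~\ref{th:lemma_gencg_tildeD}(i) we have $D^\bl(Q) = \calC_\bl\,\diagD^\bl(Q)\,\calC_\bl^T$ for every $Q\in SO(3)$, where $D^\bl(Q)$ is unitary as a tensor product of unitary Wigner matrices and $\diagD^\bl(Q)$ is unitary as a block-diagonal matrix with unitary Wigner blocks $D^{L_N}(Q)$. By the dimension identity in the remark following Lemma~\ref{th:lemma_gencg_tildeD}, the two sides have equal size $\prod_i(2l_i+1)$, so $\calC_\bl$ is square; its entries being real Clebsch--Gordan coefficients, $\calC_\bl^T\calC_\bl = I$. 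The identity of (ii) is exactly the $((\bL,0),(\bL',0))$ entry of this relation.

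For (iii) and (iv), I would substitute~\eqref{eq:DLmmuQ} into the definition~\eqref{eq:defn_intD} of $\intD^\bl_{\bmu\bbm}$, interchange the finite sum with the Haar integral, and apply~\eqref{eq:DLmmuQ_integrated} to discard all blocks with $L_N\ne 0$; within the surviving $L_N=0$ block only $M_N = M'_N = 0$ contributes, yielding
\[
    \intD^\bl_{\bmu\bbm} = \sum_{\bL\in\calL_\bl^0}[\calC_\bl]_{\bmu,(\bL,0)}\,[\calC_\bl]_{\bbm,(\bL,0)}
\]
for \emph{all} $\bmu,\bbm\in\calM_\bl$. Restricting to $\bmu,\bbm\in\calM_\bl^0$ gives (iii) directly, while (iv) follows by reapplying (i) with $M_N=0$: if $\sum\bmu\ne 0$ (respectively $\sum\bbm\ne 0$) every factor $[\calC_\bl]_{\bmu,(\bL,0)}$ (respectively $[\calC_\bl]_{\bbm,(\bL,0)}$) vanishes by the selection rule, and the whole sum collapses to zero. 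The only mild obstacle is bookkeeping: reading the compressed notation of~\eqref{eq:DLmmuQ} as a genuine double sum over $(M_N, M'_N)$ in $\diagD^\bl(Q)$, and interpreting the symbol $[\calC_\bl]_{\bbm,(\bL,\sum\bbm)}$ in~(i) as the zero-extension of $\calC_\bl$ outside the admissible index range so that the statement is non-vacuous.
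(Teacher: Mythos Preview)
Your arguments for (i), (iii), and (iv) match the paper's proof essentially line for line. For (ii) you take a genuinely different and more conceptual route: the paper expands the sum over $\bbm$ and peels off the Clebsch--Gordan factors one pair at a time using the standard two-body orthogonality $\sum_{m_1,m_2} C^{LM}_{l_1m_1l_2m_2}C^{L'M'}_{l_1m_1l_2m_2}=\delta_{LL'}\delta_{MM'}$, collapsing the $N$-fold product recursively down to $\delta_{L_2,L_2'}\cdots\delta_{L_N,L_N'}$. You instead recognize (ii) as a single entry of the matrix identity $\calC_\bl^T\calC_\bl=I$. This is cleaner and exposes the representation-theoretic content, but your justification has a small gap: from $D^\bl(Q)=\calC_\bl\,\diagD^\bl(Q)\,\calC_\bl^T$ with both $D^\bl(Q)$ and $\diagD^\bl(Q)$ unitary and $\calC_\bl$ real square, orthogonality of $\calC_\bl$ does \emph{not} follow for a generic $Q$ (unitary $=C\cdot\text{unitary}\cdot C^T$ does not force $CC^T=I$). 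The missing step is to evaluate at $Q=e$, where every Wigner block is the identity, so $D^\bl(e)=\diagD^\bl(e)=I$ and hence $\calC_\bl\calC_\bl^T=I$; squareness then gives $\calC_\bl^T\calC_\bl=I$. With that one line added your argument is complete and arguably preferable to the paper's explicit recursion, since it yields the full orthogonality of $\calC_\bl$ rather than just the $(\bL,0)$ block.
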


\begin{proof}
   (i) For $\bL \in \calL_\bl^0$, $L_N = 0$, hence the last factor in the generalized Clebsch--Gordan coefficients \eqref{eq:Cmatrix} is non-zero only if $M_N = \sum \bbm = 0$.

   (ii) To show this, we recursively use the orthogonality property of Clebsch--Gordan coefficients,
   \[
      \sum_{m_1 m_2} C_{l_1 m_1 l_2 m_2}^{L M} C_{l_1 m_1 l_2 m_2}^{L' M'} = \delta_{L L'} \delta_{M M'}.
   \]
   Hence,
   \begin{align*}
       &  \hspace{-1cm}
       [\calC_\bl]_{.,(\bL,0)}^T
      [\calC_\bl]_{.,(\bL',0)}
      =
      \sum_{\bbm}
      [\calC_\bl]_{\bbm,(\bL,0)}^T
      [\calC_\bl]_{\bbm,(\bL',0)} \\
      &=
      \sum_{m_1,m_2,\ldots,m_N}
      C_{l_1m_1l_2m_2}^{L_2M_2}C_{L_2M_2l_3m_3}^{L_3M_3} \ldots
      C_{L_{N-1}M_{N-1}l_Nm_N}^{L_N0} \\
      & \qquad \qquad \qquad \times
      C_{l_1m_1l_2m_2}^{L_2'M_2}C_{L_2'M_2l_3m_3}^{L_3'M_3} \ldots
      C_{L_{N-1}'M_{N-1}l_Nm_N}^{L_N'0} \\
      &
      =\sum_{m_1,m_2} C_{l_1m_1l_2m_2}^{L_2M_2}
      C_{l_1m_1l_2m_2}^{L_2'M_2}  \\
      & \quad
      \qquad \times\sum_{m_3,\ldots,m_N}
      C_{L_2M_2l_3m_3}^{L_3M_3} \ldots
      C_{L_{N-1}M_{N-1}l_Nm_N}^{L_N0}
      C_{L_2'M_2l_3m_3}^{L_3'M_3} \ldots
      C_{L_{N-1}'M_{N-1}l_Nm_N}^{L_N'0}
      \\
      & = \delta_{L_2,L_2'} \sum_{m_3,\ldots,m_N}
      C_{L_2M_2l_3m_3}^{L_3M_3} \ldots
      C_{L_{N-1}M_{N-1}l_Nm_N}^{L_N0}
      C_{L_2'M_2l_3m_3}^{L_3'M_3} \ldots
      C_{L_{N-1}'M_{N-1}l_Nm_N}^{L_N'0}  \\
      & = \delta_{L_2,L_2'} \delta_{L_3,L_3'}
      \ldots \delta_{L_N,L_N'}.
   \end{align*}

   (iii) From \eqref{eq:DLmmuQ_integrated}, we obtain
   \[
      \revii{\intD^\bl_{\bmu\bbm}} = \int_{SO(3)} \revii{D^\bl_{\bmu\bbm}(Q)} \; dQ
      = \revii{[\calC_\bl]_{\bmu,(\bL,0)} \;
      {\bf 1}_{\calL_\bl^0}  \;
      [\calC_\bl]^T_{\bbm,(\bL,0)}
      = \sum_{\bL \in \calL_\bl^0}
      [\calC_\bl]_{\bmu,(\bL,0)}  [\calC_\bl]^T_{\bbm,(\bL,0)}}.
   \]

   (iv) The result follows immediately from (i) and (iii).
\end{proof}

Ignoring the permutation-invariance for the moment, we have shown that the functions
\[
    b_{\bl\bbm}(\revii{\hat R}) := \sum_{\bmu \in \calM_\bl^0}
    \revii{\intD^\bl_{\bmu\bbm}}
     Y_\bl^\bmu(\revii{\hat R}),
    \quad \bl \in \N^N, \bbm \in \calM_\bl^0
\]
span the space of rotation-invariant functions on $(\bbS^2)^N$. However, they are not linearly independent. Since $b_{\bl\bbm}, b_{\bl'\bbm'}$ for $\bl \neq \bl'$ are obviously independent (they are orthogonal in $L^2(\bbS^N)$), we can focus on each subset $\{b_{\bl\bbm}|\bbm \in \calM^0_\bl\}$, for which the following theorem gives its dimension and an expression of possible orthogonal rotation-invariant basis functions.

\begin{proposition} \label{th:counting_RI_coeffs}
   Let $\bl \in \N^N$, then the following statements are true.

   (i) If $N=1$,
   ${\rm dim}~{\rm span}~\big\{ b_{\bl\bbm} \,|\, \bbm \in \calM_\bl^0 \big\} = 1$ if $\bl = (0)$ and  $0$ otherwise.

   (ii) If $N \geq 2$,
   \begin{equation} \label{eq:symm:rankD_vs_dim}
       {\rm dim}~{\rm span}~\big\{ b_{\bl\bbm} \,|\, \bbm \in \calM_\bl^0 \big\} = \# \calL_\bl^0.
   \end{equation}

   (iii) \revii{${\rm range} \big([\calC_\bl]_{\bmu,(\bL,0)}\big)_{\bmu\in\calM_\bl^0,\bL\in\calL_\bl^0}  = {\rm range} \big( \intD^\bl_{\bmu, \bbm} \big)_{\bmu\in\calM_\bl^0,\bbm\in\calM_\bl^0}$.}
\end{proposition}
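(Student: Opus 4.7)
My plan is to unify all three statements by reading Lemma~\ref{lem:CG_coef_properties}(iii) as a compact factorisation of the integrated matrix $\intD^\bl$ restricted to $\calM_\bl^0$. Define
\[
   U := \bigl([\calC_\bl]_{\bmu,(\bL,0)}\bigr)_{\bmu \in \calM_\bl^0,\ \bL \in \calL_\bl^0}.
\]
Lemma~\ref{lem:CG_coef_properties}(i) lets me restrict the row index to $\calM_\bl^0$ without loss (entries with $\sum \bmu \neq 0$ vanish), while Lemma~\ref{lem:CG_coef_properties}(iii) rewrites as $\intD^\bl = U U^T$ on $\calM_\bl^0 \times \calM_\bl^0$, and Lemma~\ref{lem:CG_coef_properties}(ii) says exactly that $U^T U = I_{\#\calL_\bl^0}$, i.e., $U$ has orthonormal columns. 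With this factorisation in hand, (ii) and (iii) reduce to elementary linear algebra.

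For part (i), I would handle $N=1$ directly: $\calM_{(l_1)}^0 = \{(0)\}$, and $b_{(l_1),(0)}$ is a linear combination of spherical harmonics of a fixed degree $l_1$ on $\bbS^2$, which can be rotation-invariant only when $l_1 = 0$, in which case it is a nonzero constant. Equivalently, the direct computation $\int_{{\rm SO}(3)} D^{l}_{\mu 0}(Q)\,dQ = \delta_l\,\delta_\mu$ shows $\intD^{(l_1)}_{\mu_1,0} = \delta_{l_1}\delta_{\mu_1}$, so the span is one-dimensional for $\bl=(0)$ and trivial otherwise.

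For part (ii), since the $Y_\bl^\bmu$ form an orthonormal family,
\[
   \dim {\rm span}\,\{b_{\bl\bbm} : \bbm \in \calM_\bl^0\} = {\rm rank}\bigl(\intD^\bl_{\bmu\bbm}\bigr)_{\bmu,\bbm \in \calM_\bl^0} = {\rm rank}(U U^T).
\]
From $U^T U = I$ one gets ${\rm rank}(UU^T) = {\rm rank}(U) = \#\calL_\bl^0$, which is the claim. For part (iii), the inclusion ${\rm range}(UU^T) \subset {\rm range}(U)$ is automatic, and conversely any column $Uv$ satisfies $Uv = U(U^T U)v = (UU^T)(Uv) \in {\rm range}(UU^T)$, so the two ranges coincide; since ${\rm range}(\intD^\bl) = {\rm range}(UU^T)$, this is exactly the asserted equality of ranges.

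The main obstacle in this proposition is essentially displaced upstream: all the real work sits in Lemma~\ref{lem:CG_coef_properties}(ii, iii), where one must iteratively apply the elementary Clebsch--Gordan orthogonality to chain together the $N-1$ couplings of angular momenta. Once that lemma is available, the present proposition is a short translation of the identity $\intD^\bl = UU^T$ (with $U$ having orthonormal columns) into statements about dimensions and ranges, and I anticipate no further technical difficulty. The only minor subtlety worth flagging is to verify that part~(i) of Lemma~\ref{lem:CG_coef_properties} allows one to pass from the full index set $\calM_\bl$ to $\calM_\bl^0$ without losing any column of $\intD^\bl$, which is what legitimises reading $U$ as a square-ish matrix with genuinely orthonormal columns rather than merely a rectangular block of a larger object.
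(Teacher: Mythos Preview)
Your proposal is correct and follows essentially the same route as the paper: the paper also reduces (ii) and (iii) to the factorisation $\intD^\bl = UU^T$ with $U^T U = I$ furnished by Lemma~\ref{lem:CG_coef_properties}(ii,iii), and handles (i) directly via the Wigner-$D$ integral identity~\eqref{eq:app:wigner_D_matrix}. Your write-up is actually more explicit about the elementary linear algebra (rank and range of $UU^T$ when $U$ has orthonormal columns) than the paper, which simply calls this a ``diagonalization'' and leaves the conclusions as immediate.
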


\begin{proof}
{\it (i)} For $N=1$, we can directly use the property of Wigner-D matrices \eqref{eq:app:wigner_D_matrix} to obtain the result.

{\it (ii, iii)} Lemma \ref{lem:CG_coef_properties}(iii) provides an diagonalization of $\intD^\bl$, as the generalized Clebsch--Gordan coefficients, $\calC_\bl$, are orthonormal. Hence the dimension of ${\rm span}~\big\{ b_{\bl\bbm} \,|\, \bbm \in \calM_\bl^0 \big\}$ is equal to the rank of $\intD^\bl$, which according to Lemma~\ref{th:lemma_gencg_tildeD} is equal to $\#\{ \calL_\bl^0 \}$; and we easily obtain (iii).
\end{proof}

Finally, we can define orthogonal rotation-invariant basis functions indexed by $\bL\in\calL_\bl^0$ as
\[
   b_{\bl \bL}\revii{(\hat R)} := \sum_{\bmu \in \revii{\calM_\bl^0}}
   [\calC_\bl]_{\bmu, (\bL,0)}
   Y_\bl^\bmu(\revii{\hat R}),
        \qquad \bL\in\calL_\bl^0.
\]

\begin{remark}
    The statements below follow from Proposition~\ref{th:counting_RI_coeffs}:
   \begin{itemize}[wide]  
   \item For $N = 2$, ${\rm dim}~{\rm span}~\big\{ b_{\bl\bbm} \,|\, \bbm \in \calM_\bl^0 \big\} = 1$ if
       $\bl = (l,l), l \in \N$; \\ 
        and ${\rm dim}~{\rm span}~\big\{ b_{\bl\bbm} \,|\, \bbm \in \calM_\bl^0 \big\} = 0$ otherwise.

   \item For $N = 3$, if  $|l_1 - l_2| \le l_3 \le l_1 + l_2,$ and $l_1+l_2+l_3$ is an even number, ${\rm dim}~{\rm span}~\big\{ b_{\bl\bbm} \,|\, \bbm \in \calM_\bl^0 \big\} = 1$, with
   \[
   b_{\bl (l_3,l_3)}\revii{(\hat R)} := \sum_{\bmu \in \revii{\calM_\bl^0}}
   [\calC_\bl]_{\bmu, (l_3,l_3,0)}
   Y_\bl^\bmu(\revii{\hat R}).
\]
Otherwise, ${\rm dim}~{\rm span}~\big\{ b_{\bl\bbm} \,|\, \bbm \in \calM_\bl^0 \big\} = 0$.
    \item    For $N \ge 4$, there are cases where ${\rm dim}~{\rm span}~\big\{ b_{\bl\bbm} \,|\, \bbm \in \calM_\bl^0 \big\} > 1$. For example, for $\bl = (1,1,1,1)$,
   ${\rm dim}~{\rm span}~\big\{ b_{\bl\bbm} \,|\, \bbm \in \calM_\bl^0 \big\} = 3$. We include a table with these numbers for a few other examples in Appendix~\ref{sec:app:dim_rot_inv}.
   \end{itemize}
\end{remark}

\subsection{Combining Rotation and Permutation Invariance}
\label{sec:symm:rot+perm}
It now remains to combine the rotation-invariance with permutation and reflection invariance. Directly applying our construction of the $b_{\bl \bL}$  basis to the permutation symmetric functions \eqref{eq:start_rot} yields basis functions
(we now revert to using $\bbm$ instead of $\bmu$)
\begin{align*}
    \tilde{\calB}_{\bn\bl \bL} := \sum_{\sigma \in S_N} \sum_{\bbm \in \calM^0_\bl}
                    [\calC_\bl]_{\bbm, (\bL,0)}
                    \phi_{\bn\bl\bbm} \circ \sigma, \quad
    &(\bn, \bl) \in \N^{2N} \text{ ordered}, \quad \sum \bl~\text{even},
    \quad {\bm L} \in \calL_\bl^0, \\[-5mm]
\end{align*}
which are permutation, reflection and rotation-invariant by construction.
For simplicity, we index the set $\calL_\bl^0$ from 1 to $\tilde{n}_\bl= \# \calL_\bl^0$ and we denote the basis functions by
\[
    \tilde{\calB}_{\bn\bl i} = \tilde{\calB}_{\bn\bl \bL^{(i)}} , \qquad i = 1, \dots, \tilde{n}_\bl,
\]
where $\bL^{(i)}$ is the $i^{\rm th}$ term in an enumeration of the elements  $\calL_\bl^0$.

Alternatively, and equivalently,  we may use the numerically constructed basis from  \S~\ref{sec:symm:rot}, which would lead to
\begin{align*}
    \tilde{\calB}_{\bn\bl i} := \sum_{\sigma \in S_N} \sum_{\bbm \in \calM^0_\bl}
                    \revii{\tilde\calU^\bl_{\bbm i}} \phi_{\bn\bl\bbm} \circ \sigma, \qquad
    & i = 1, \dots, \tilde{n}_\bl,
\end{align*}
where \revii{$\tilde\calU^\bl_{\bbm i}$} is constructed according to Lemma~\ref{th:symm:orth_RI_basis}.

While the set of all $\tilde{\calB}_{\bn\bl i}$ is a spanning set by construction, it turns out that, except for $N \leq 3$ (cf. Proposition~\ref{th:counting_RI_coeffs}), they are not linearly independent. Indeed, one can observe that the $\intD^\bl$ coefficients have certain symmetries, and after symmetrising the rotation invariant basis with respect to $S_N$, these symmetries give rise to additional linear dependence within a block of basis functions $\{ \tilde{\calB}_{\bn\bl i} | i = 1, \dots, \tilde{n}_\bl\}$.

To overcome this, one could study the symmetries of the generalised Clebsch--Gordan coefficients with respect to permutation of the indices based, e.g., on~\cite{Yutsis1962-yc}. One could also work on combining the representations of the rotations and the permutations following~\cite{Schmiedt2016-nw}.
However, so far, there does not seem to be explicit and unified formulas for all $N$, nor does it seem straightforward to obtain them.

For now, we proceed by a semi-numerical construction: We begin by algebraically evaluating the Gramian
\[
    G_{i,i'}^{\bn\bl} := \big\<\!\!\big\< \tilde{\calB}_{\bn\bl i}, \tilde{\calB}_{\bn\bl i'} \big\>\!\!\big\>,
\]
with respect to the abstract inner product,
\begin{equation} \label{eq:symm:abstract_ip}
    \<\!\< \phi_{\bn\bl\bbm}, \phi_{\bn'\bl'\bbm'} \>\!\> :=
    \delta_{\bn\bn'} \delta_{\bl\bl'} \delta_{\bbm\bbm'}.
\end{equation}
It is obvious that, if $\calP$ is linearly independent, then $\<\!\< \cdot, \cdot \>\!\>$ is an inner product on $\Phi_N$ (cf. \S~\ref{sec:pes:approx}). Moreover, we will show in \S~\ref{sec:radial} how to construct radial bases that are indeed {\em orthogonal} with respect to natural inner products.

At low and moderate interaction orders, $G^{\bn\bl}$ can be evaluated fairly \revii{quickly}.
\revii{Indeed, its computation only involves sums over permutations satisfying $(\bn,\bl,\bbm) = (\sigma \bn,\sigma \bl, \sigma \bbm')$ with $\bbm,\bbm' \in  \calM^0_\bl$, which stays computationally feasible at least up to interaction order $N = 10$.}
After diagonalising $G^{\bn\bl} = V \Sigma V^T$ we can then define a new set of coefficients
\begin{equation} \label{eq:defn_calU_coeffs}
    \revii{\calU^{\bn\bl}_{\bbm i}}
    :=
    \revii{\frac{1}{\sqrt{\Sigma_{ii}}}}\sum_{\alpha = 1}^{\tilde{n}_{\bl}} 
     [V_{\alpha i}]^* \revii{\tilde\calU^\bl_{\bbm\alpha}}, \qquad
    i = 1, \dots, n_{\bn\bl},
\end{equation}
where $n_{\bn\bl} = {\rm rank}(G^{\bn\bl})$ and $\revii{\tilde\calU^\bl_{\bbm\alpha}}, \tilde{n}_\bl$ are defined in Lemma~\ref{th:symm:orth_RI_basis}.
With this definition we obtain
\begin{equation} \label{eq:symm:defn_calB_Nparticles}
    \calB_{\bn\bl i} := \sum_{\bbm \in \revii{\calM_\bl^0}} \revii{\calU^{\bn\bl}_{\bbm i}} \sum_{\sigma \in S_N} \phi_{\bn \bl \bbm} \circ \sigma,
\end{equation}
which we collect into the basis
\begin{equation} \label{eq:firstbasis}
    {\bm \calB}_N := \big\{ \calB_{\bn\bl i} \,\big|\,
        (\bn, \bl) \in \N^{2N} \text{ ordered}, {\textstyle \sum} \bl \text{ even};
            i = 1, \dots, n_{\bn\bl} \big\}.
\end{equation}
This defines our first symmetric basis set.

\begin{theorem} \label{th:symm:approx_calB_basis}
    Fix $N \geq 1$ and let $\calP$ be a radial basis satisfying Assumption~\ref{as:P} with $\rcut > \ro > 0$, then ${\bm \calB}_N \subset C^{t}_{\rcut}(\Omega_\ro^N)$ is linearly independent and
    \[
        C^{t}_{\rcut}(\Omega_\ro^N) \subset \clospan{{\bm \calB}_N}{C^t}
    \]
    Moreover, ${\bm \calB}_N$ are orthonormal with respect to the inner product \eqref{eq:symm:abstract_ip}.
\end{theorem}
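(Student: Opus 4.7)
The plan is to verify the four claims of the theorem separately: (a) ${\bm\calB}_N\subset C^t_{\rcut}(\Omega_\ro^N)$, (b) orthonormality with respect to $\<\!\<\cdot,\cdot\>\!\>$, (c) linear independence, and (d) density. Since ${\bm\calB}_N$ consists of symmetric functions, (d) is to be read as density within the subspace of $C^t_{\rcut}(\Omega_\ro^N)$ inheriting the symmetries of Assumption~\ref{as:VN}(iii,iv).

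The first three items will follow largely by bookkeeping. For (a), each $\calB_{\bn\bl i}$ is a finite linear combination of tensor products $\prod_\alpha P_{n_\alpha}(r_\alpha)\,Y_{l_\alpha}^{m_\alpha}(\hat\br_\alpha)$; Assumption~\ref{as:P} puts the radial factors in $C^t_{\rcut}([\ro,\infty))$ and the spherical harmonics are smooth, so the product lies in $C^t_{\rcut}(\Omega_\ro^N)$. For (b), I split into inter-block and intra-block orthogonality. If $(\bn,\bl)\neq(\bn',\bl')$ are distinct ordered tuples, then no permutation maps one to the other, so by~\eqref{eq:symm:abstract_ip} every cross-term $\<\!\<\phi_{\bn\bl\bbm}\circ\sigma,\phi_{\bn'\bl'\bbm'}\circ\sigma'\>\!\>$ vanishes, which gives inter-block orthogonality. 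For fixed $(\bn,\bl)$, one rewrites $\calB_{\bn\bl i}=\Sigma_{ii}^{-1/2}\sum_\alpha [V_{\alpha i}]^*\,\tilde\calB_{\bn\bl\alpha}$ using~\eqref{eq:defn_calU_coeffs}, and expanding against the Gramian $G^{\bn\bl}=V\Sigma V^T$ of the $\tilde\calB_{\bn\bl\alpha}$ yields $\<\!\<\calB_{\bn\bl i},\calB_{\bn\bl i'}\>\!\>=\delta_{ii'}$ for $i,i'\le n_{\bn\bl}$; the truncation to $n_{\bn\bl}={\rm rank}\,G^{\bn\bl}$ is precisely what excludes the null directions of the Gramian. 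Claim (c) is then immediate from (b).

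The density claim (d) is where the real work lies. Given $V_N$ satisfying Assumption~\ref{as:VN} and $\varepsilon>0$, Proposition~\ref{th:approx_nonsym} provides $\tilde V_N\in{\rm span}\,\Phi_N$ with $\|V_N-\tilde V_N\|_{C^t}\le\varepsilon$. I would then apply the Haar-symmetrisation bound~\eqref{eq:symmetrised approximation} over the compact group generated by $S_N$ and ${\rm O}(3)$ to obtain a symmetric $\tilde V_N^{\rm sym}$ inheriting the same error bound. To conclude, I need $\tilde V_N^{\rm sym}\in{\rm span}\,{\bm\calB}_N$, which I would establish by carrying the three symmetrisations through term by term as in \S\ref{sec:symm:perm}--\S\ref{sec:symm:rot+perm}: reflection-averaging retains only $\phi_{\bn\bl\bbm}$ with $\sum\bl$ even, ${\rm SO}(3)$-integration via~\eqref{eq:defn_intD} sends each $\phi_{\bn\bl\bbm}$ into ${\rm span}\{b_{\bl\bbm}\}$, and $S_N$-averaging then produces a linear combination of the $\tilde\calB_{\bn\bl i}$. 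Finally ${\rm span}\{\tilde\calB_{\bn\bl i}\}={\rm span}\,{\bm\calB}_N$ because~\eqref{eq:defn_calU_coeffs} is a nonsingular change of basis on the range of $G^{\bn\bl}$, while its kernel encodes linear dependences already present among the $\tilde\calB_{\bn\bl i}$.

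The main obstacle I anticipate is the concluding step of (d): confirming that the nested three-fold symmetrisation decomposes cleanly into $\{\tilde\calB_{\bn\bl i}\}$, and that the SVD-based rank truncation in~\eqref{eq:defn_calU_coeffs} discards only redundancies rather than anything essential. Everything else is either a direct application of results already established in the excerpt or standard linear algebra once the indexing is set up carefully.
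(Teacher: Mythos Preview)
Your proposal is correct and follows the same line as the paper, which dispatches the theorem in one sentence as ``an immediate consequence of Proposition~\ref{th:approx_nonsym} and the construction of ${\bm \calB}_N$.'' You have simply unpacked that construction: the symmetrisation steps of \S\ref{sec:symm:perm}--\S\ref{sec:symm:rot+perm} for density, and the Gramian diagonalisation~\eqref{eq:defn_calU_coeffs} for orthonormality, which is precisely what the paper means by ``the construction.'' Your remark that (d) must be read as density within the symmetric subspace is a useful clarification the paper leaves implicit.
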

\begin{proof}
    This result is an immediate consequence of Proposition~\ref{th:approx_nonsym} and the construction of ${\bm \calB}_N$.
\end{proof}

Although we cannot (at present) explicitly construct the rotation and permutation-invariant basis, we may still ask whether it is possible to {\em predict its size}. That is, given $\bn, \bl$ tuples, we need to predict the rank of $G^{\bn\bl}$. Even this appears to be difficult in general, with tedious explicit calculations for specific cases (though easily performed numerically).

\revi{
Nevertheless, note that the dependency of $G^{\bn\bl}$ with respect to $\bn$ only appears through a sum over the permutations of size $N$ leaving $\bn$ and $\bl$ invariant, but where the elements in the sum do not depend on $\bn$.
Indeed, a straightforward calculation leads to
\[
     G_{i,i'}^{\bn\bl} = N! \sum_{\substack{\sigma \in S_N \\ \sigma(\bl) = \bl \\  \sigma(\bn) = \bn}}
      \sum_{\bbm,\bbm' \in \calM^0_\bl}
                    \tilde\calU^\bl_{\sigma(\bbm) i } \tilde\calU^\bl_{ \bbm' i'}.
\]
}
Hence, for each fixed $\bl$, the calculation of the number of rotation and permutation-invariant (RPI) basis functions needs only to be done for a finite number of indices $\bn$'s. We summarize these for the lowest degrees in  Table~\ref{tbl:more_dimensions}.

A preliminary result estimating the number of rotation and permutation-invariant (RPI) basis functions versus the number of rotation-invariant (RI) basis functions, for a single $(\bn, \bl)$ block, is the following:
\begin{proposition}
    \label{th:ri_vs_rpi_distinct}
    If $(\bn,\bl) \in \N^{2N}$ is such that all $(n_i,l_i)$ are pairwise distinct, then the number of permutation-invariant and permutation- and rotation-invariant basis functions match, i.e.,
    $\tilde{n}_\bl = n_{\bn\bl}$, with $\tilde{n}_\bl = \# \calL_\bl^0$  and $n_{\bn\bl}$ defined in~\eqref{eq:defn_calU_coeffs}.
\end{proposition}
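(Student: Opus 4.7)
The plan is to exploit the explicit formula for $G^{\bn\bl}$ displayed just above the statement and to show that, under the distinctness hypothesis, the stabiliser in the permutation sum collapses to the identity alone. Combined with the orthonormality of the columns of $\tilde\calU^\bl$ from Lemma~\ref{th:symm:orth_RI_basis}, this will give $G^{\bn\bl} = N!\, I_{\tilde n_\bl}$ and hence $n_{\bn\bl} = \mathrm{rank}\,G^{\bn\bl} = \tilde n_\bl$.

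The first step is a one-line combinatorial observation. If $\sigma \in S_N$ satisfies both $\sigma\bn = \bn$ and $\sigma\bl = \bl$, then $(n_{\sigma(i)}, l_{\sigma(i)}) = (n_i, l_i)$ for each $i$, so by pairwise distinctness of the pairs $(n_i, l_i)$ we must have $\sigma(i) = i$ for every $i$. Hence only $\sigma = \mathrm{id}$ contributes to the stabiliser sum in the Gramian formula, which reduces to
\[
G_{i,i'}^{\bn\bl} = N! \sum_{\bbm \in \calM^0_\bl} \tilde\calU^\bl_{\bbm i}\, \overline{\tilde\calU^\bl_{\bbm i'}}.
\]
The second step invokes the orthonormality $(\tilde\calU^\bl_i)^* \tilde\calU^\bl_{i'} = \delta_{ii'}$ from Lemma~\ref{th:symm:orth_RI_basis}; together with the fact (cf.\ Lemma~\ref{lem:CG_coef_properties}(iv)) that the relevant components of $\tilde\calU^\bl$ are supported on $\calM^0_\bl$, this yields $G^{\bn\bl} = N!\, I_{\tilde n_\bl}$. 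Taking ranks gives $n_{\bn\bl} = \tilde n_\bl$.

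I do not anticipate a substantive obstacle: once the Gramian formula just above the statement is accepted, the proposition is essentially a one-line consequence of the stabiliser computation and the orthonormality of $\tilde\calU^\bl$. The only minor point to articulate carefully is the reconciliation of the summation range $\calM^0_\bl$ used in the definition of $\tilde{\calB}_{\bn\bl i}$ with the range $\calM_\bl$ appearing in Lemma~\ref{th:symm:orth_RI_basis}, which is handled by the vanishing of the relevant columns of $\tilde\calU^\bl$ outside $\calM^0_\bl$.
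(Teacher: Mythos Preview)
Your proof is correct and considerably more direct than the paper's. The paper argues representation-theoretically: it introduces the space $T_{\bn\bl} = \mathrm{span}\{\phi_{\sigma(\bn)\sigma(\bl)\sigma(\bbm)} : \sigma \in S_N,\ \bbm \in \calM_\bl^0\}$, decomposes it into ``symmetric'' and ``(partially) anti-symmetric'' pieces, checks that both pieces are stable under the joint rotation--permutation action (relying on the distinctness hypothesis for linear independence of the $\phi_{\sigma(\bn)\sigma(\bl)\sigma(\bbm)}$), and then observes that symmetrising over $S_N$ annihilates the anti-symmetric part while leaving the rotation structure on the symmetric part unchanged. Your route instead exploits the Gramian formula already displayed in the text: the distinctness hypothesis forces the stabiliser $\{\sigma \in S_N : \sigma\bn=\bn,\ \sigma\bl=\bl\}$ to be trivial, whence $G^{\bn\bl}$ collapses to $N!$ times the Gram matrix of the orthonormal columns of $\tilde\calU^\bl$, i.e.\ $N!\,I_{\tilde n_\bl}$, so $n_{\bn\bl}=\mathrm{rank}\,G^{\bn\bl}=\tilde n_\bl$. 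This uses only Lemma~\ref{th:symm:orth_RI_basis} and Lemma~\ref{lem:CG_coef_properties}(iv) and sidesteps the ad hoc symmetric/anti-symmetric decomposition entirely. The paper's argument is more conceptual and perhaps hints at the structure in the non-distinct case, but your computation is cleaner and, as you note, the reconciliation of the $\calM^0_\bl$ versus $\calM_\bl$ summation ranges is the only point requiring care, and you handle it correctly.
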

\begin{proof}
   Let us first define the space of functions
   \[
        T_{\bn\bl} = {\rm span} \big\{ \phi_{\sigma(\bn)\sigma(\bl)\sigma(\bbm)} | \; \sigma \in S_N, \bbm \in \calM_\bl^0 \big\}.
   \]
   This space is closed under permutations and rotations of the variables.
   Moreover, if all $(n_i,l_i)$ are pairwise distinct, all $\phi_{\sigma(\bn)\sigma(\bl)\sigma(\bbm)}$ are linearly independent.
   This space is also generated by the following linearly independent functions, separated into symmetric and (partially) anti-symmetric functions.
   \begin{align*}
               T_{\bn\bl} = {\rm span} \Big\{ & \sum_{\sigma\in S_N} \phi_{\sigma(\bn)\sigma(\bl)\sigma(\bbm)}| \; \bbm \in \calM_\bl^0, \\ &
        \sum_{\sigma\in S_N\backslash \sigma_i} \phi_{\sigma(\bn)\sigma(\bl)\sigma(\bbm)}
        - (N!-1) \phi_{\sigma_i(\bn)\sigma_i(\bl)\sigma_i(\bbm)}
        | \; \sigma_i \in S_N\backslash (1) , \; \bbm \in \calM_\bl^0 \Big\}.
   \end{align*}
   Then, one can show that the action of rotation and permutation is stable over the symmetric and anti-symmetric functions. More precisely, for the symmetric functions, there holds for all $Q \in SO(3)$, $s \in S_N$,
   \[
   \sum_{\sigma\in S_N} \phi_{\sigma(\bn)\sigma(\bl)\sigma(\bbm)}(Q R_s)
    =
    \sum_{\bmu \in \calM_\bl^0}
    \revii{D^\bl_{\bmu\bbm}(Q)} \Big[ \sum_{\sigma\in S_N} \phi_{\sigma(\bn)\sigma(\bl)\sigma(\bbm)}(R) \Big]
   .
   \]
   For the antisymmetric functions, for $Q \in SO(3)$ and $s \in S_N$,
   \begin{align*}
      &  \sum_{\sigma\in S_N\backslash \sigma_i} \phi_{\sigma(\bn)\sigma(\bl)\sigma(\bbm)}(Q R_s)
       - (N!-1) \phi_{\sigma_i(\bn)\sigma_i(\bl)\sigma_i(\bbm)}(Q R_s) \\
    &=  \sum_{\sigma\in S_N\backslash \sigma_i} \phi_{s^{-1}\sigma(\bn)s^{-1}\sigma(\bl)s^{-1}\sigma(\bbm)}(Q R)
       - (N!-1) \phi_{s^{-1}\sigma_i(\bn)s^{-1}\sigma_i(\bl)s^{-1}\sigma_i(\bbm)}(Q R)  \\
       & = \sum_{\sigma\in S_N\backslash s^{-1}\sigma_i} \phi_{\sigma(\bn)\sigma(\bl)\sigma(\bbm)}(Q R)
       - (N!-1) \phi_{s^{-1}\sigma_i(\bn)s^{-1}\sigma_i(\bl)s^{-1}\sigma_i(\bbm)}(Q R) \\
       & =
       \sum_{\bmu \in \calM_\bl^0}
    \revii{D^\bl_{\bmu\bbm}(Q)} \left[
    \sum_{\sigma\in S_N\backslash s^{-1}\sigma_i} \phi_{\sigma(\bn)\sigma(\bl)\sigma(\bbm)}(R)
       - (N!-1) \phi_{s^{-1}\sigma_i(\bn)s^{-1}\sigma_i(\bl)s^{-1}\sigma_i(\bbm)}(R)
    \right].
   \end{align*}
    Therefore, the antisymmetric functions are stable under permutation and rotation.

    The permutation-invariant functions can be generated from summing these functions over permutations.
    Doing this summation, we obtain that the antisymmetric functions integrate to zero, whereas the relation over the symmetric functions does not change when summing over the permutations. Hence, we obtain
    \[
        T_{\bn\bl}^{PI} := {\rm span} \big\{ \sum_{\sigma \in S_N} \phi(R_\sigma) \; | \; \phi \in T_{\bn\bl} \big\}
        = {\rm span} \big\{
         \sum_{\sigma\in S_N} \phi_{\sigma(\bn)\sigma(\bl)\sigma(\bbm)}| \; \bbm \in \calM_\bl^0
        \big\}.
    \]
    The transformation under rotation of the functions in $T_{\bn\bl}^{PI}$
    being similar to the rotational case, we can then conclude that $ \text{\rm dim span} \big\{ \phi_{\bn\bl\bbm} | \; \bbm \in \calM_\bl^0 \big\} =  \text{\rm dim span} \big\{ b_{\bl\bbm} | \; \bbm \in \calM_\bl^0 \big\}$.
\end{proof}

\begin{remark}
    Asymptotically, as the polynomial degree tends to infinity, most $\bn$ tuples are strictly ordered and therefore most $(\bn,\bl)$ tuple pairs satisfy the condition of Proposition~\ref{th:ri_vs_rpi_distinct}. This implies that in this limit the number of RI and RPI basis functions is comparable. That is, we could estimate the size of the basis $\#{\bm \calB}_N$, restricted to some polynomial degree, provided we have an estimate of $\#\calL_\bL^0$ for all $\bL$. An expression for $\#\calL_\bL^0$ can indeed be obtained from the foregoing analysis but is difficult to analyze, and beyond the scope of the present paper.
\end{remark}

\def\lbrak{(}
\def\rbrak{)}

\begin{table}
    \begin{center} \small
    \begin{tabular}{cccc}
        \toprule
        $ \bl $ & \,\#RI\, & $ \bn $ & \#RPI \\
        \midrule[0.075em]
        \multirow{3}{*}{\lbrak 2, 2, 2, 2\rbrak}
                         & \multirow{3}{*}{5}  &  \lbrak 0, 0, 0, 1\rbrak &  1 \\
          &  &  \lbrak 0, 0, 0, 2\rbrak &  1 \\
          &  &  \lbrak 0, 0, 1, 2\rbrak &  3 \\
        \midrule[0.02em]
        \multirow{1}{*}{\lbrak 2, 2, 2, 4\rbrak}
                         & \multirow{1}{*}{3}  &  \lbrak 0, 0, 0, 0\rbrak &  1 \\
        \midrule[0.02em]
        \multirow{1}{*}{\lbrak 2, 2, 3, 3\rbrak}
                         & \multirow{1}{*}{5}  &  \lbrak 0, 0, 0, 0\rbrak &  3 \\
        \bottomrule
    \end{tabular}
    \quad
    \begin{tabular}{cccc}
        \toprule
        $ \bl $ & \,\#RI\, & $ \bn $ & \#RPI \\
        \midrule[0.075em]
        \multirow{3}{*}{\lbrak 1, 1, 2, 2, 2\rbrak}
                         & \multirow{3}{*}{9}  &  \lbrak 0, 0, 0, 1, 2\rbrak &  6 \\
          &  &  \lbrak 0, 1, 0, 0, 0\rbrak &  2 \\
          &  &  \lbrak 1, 1, 0, 0, 1\rbrak &  4 \\
        \midrule[0.02em]
        \multirow{1}{*}{\lbrak 1, 2, 2, 2, 3\rbrak}
                         & \multirow{1}{*}{12}  &  \lbrak 0, 0, 0, 0, 0\rbrak &  3 \\
        \midrule[0.02em]
        \multirow{1}{*}{\lbrak 2, 2, 2, 2, 2\rbrak}
                         & \multirow{1}{*}{16}  &  \lbrak 0, 0, 0, 0, 0\rbrak &  1 \\
        \bottomrule
    \end{tabular}
    \end{center}
    \medskip
    \caption{
        \label{tab:RIvsRPI_small}
    Representative examples of reduction in basis size by combining rotation with permutation-invariance; cf. \S~\ref{sec:symm:rot+perm}. For each $\bl \in \N^N$, $N = 4, 5$, the column `$\#{\rm RI}$' displays the number $\tilde{n}_{\bl}$ of rotation-invariant (RI) basis functions, i.e.,
    ${\rm dim~span}\{ b_{\bl\bbm} \,|\, \bbm \in \calM_\bl^0\}$.
    For each $(\bl, \bn)$ the column `$\#{\rm RPI}$' displays the number $n_{\bn\bl}$ of corresponding rotation- and permutation-invariant (RPI)    basis functions; cf. \eqref{eq:defn_calU_coeffs}.
    }
\end{table}

\begin{remark}
   When $\bn,\bl$ is such that all $(n_i,l_i)$ are {\em not} pairwise distinct, then the claim that the basis functions spanning $T_{\bn\bl}$ are linearly independent is not satisfied anymore, as was very recently shown in the slightly different context~\cite{Nigam2020-zj} of covariant descriptors. In particular in the preasymptotic regime of low polynomial degrees and high interaction order $N \geq 4$ this situation is typical and leads to a striking reduction in basis size, as Table~\ref{tab:RIvsRPI_small} shows.
   A more comprehensive table of basis function numbers is given in Appendix~\ref{sec:app:dim_rot_inv}. 
\end{remark}

\section{Efficient Evaluation}
\label{sec:proj}  
We already  commented at the beginning of \S~\ref{sec:symm} that the basis ${\bm \calB}_N$ we constructed throughout that section is not computationally efficient due to the $N!$ terms arising in the summation over all permutations.
A second limitation of the ${\bm \calB}_N$ basis is that we usually wish to evaluate the sum over all $N$-neighbour clusters (cf. \S~\ref{sec:pes})
\begin{equation} \label{eq:proj:extension_calB}
    \calB_{\bn\bl i}\big(\{\br_j\}_{j = 1}^J\big)
    :=
    \sum_{1 \leq j_1 < \dots < j_N \leq J}
    \calB_{\bn\bl i}(\br_{j_1}, \dots, \br_{j_N}),
\end{equation}
which brings an additional $\binom{J}{N}$ cost. For the remainder of this paper
we take \eqref{eq:proj:extension_calB} to be the {\em definition} of
$\calB_{\bn\bl i}$ when applied to an atomic neighbourhood
$\{\br_j\}_{j = 1}^J$. This is a consistent extension: When $N = J$ this
definition coincides with our previous one.

The purpose of the present section is to derive an alternative basis with cost
that scales linearly with $N$. The main ideas that we use here can in various
formats be found in~\cite{Behler2007-ng,Bartok2010-mv,Shapeev2016-pd} and in particular \cite{Drautz2019-er}.
We nevertheless give a full derivation, for the sake of
completeness, but also because we will indicate in \S~\ref{sec:proj:orth} how to modify this construction to efficiently compute the original orthogonal basis functions $\calB_{\bn\bl i}$.

In preparation, and arguing purely formally for now, an atomic neighbourhood $R = \{\br_j\}_{j = 1}^J$ can be represented as its {\em density},
\[
    \rho_R(\br) := \sum_{j = 1}^J \delta(\br - \br_j),
\]
which we can project onto the 1-particle basis,
\begin{equation} \label{eq:proj:defn_Anlm}
    A_{nlm}(R) := \< \phi_{nlm}, \rho_R \>
    =
    \sum_{j = 1}^J \phi_{nlm}(\br_j),
    \qquad
    n, l \in \N, m \in \{-l, \dots, l\}.
\end{equation}
This means that we can think of a site energy $V$ as a function defined on configuration space $\calR$, or on the space of measures $\{\rho\}$, or on the space of descriptors $\{ (A_{nlm}) \}$. But note that none of these incorporate the isometry invariance. 

If we symmetrize the density projection by averaging over ${\rm O}(3)$ then we lose all angular information and retain only the radial basis which is clearly insufficient to describe a general atom environment. The typical strategy to overcome this is to consider correlations: for $\bn = (n_\alpha)_{\alpha=1}^N, \bl = (l_\alpha)_{\alpha=1}^N, \bbm = (m_\alpha)_{\alpha=1}^N$ we define
\begin{equation} \label{eq:defn_correlations}
    A_{\bn\bl\bbm}(R) := \big\< \otimes_{\alpha = 1}^N \phi_{n_\alpha l_\alpha m_\alpha}, \otimes_{\alpha = 1}^\revii{N} \rho_R \big\>
    =  \prod_{\alpha = 1}^N A_{n_\alpha l_\alpha m_\alpha}(R).
\end{equation}
which can then be symmetry-adapted by averaging over ${\rm O}(3)$. E.g., with $N = 2$ this leads to SOAP~\cite{Bartok2010-mv}; see Appendix~\ref{app:otherbases} for more details. In the following we explore how these correlations are related to the symmetric polynomial bases ${\bm \calB}_N$ derived in the previous section.

\subsection{Symmetry adapted correlations}

\label{sec:ACEBasis}
An alternative way to write \eqref{eq:proj:extension_calB} is
\[
    \calB_{\bn\bl i}\big(\{\br_j\}_{j = 1}^J\big)
    =
    \frac{1}{N!}
    \sum_{j_1 \neq \dots \neq j_N}
    \calB_{\bn\bl i}(\br_{j_1}, \dots, \br_{j_N}),
\]
where $\sum_{j_1 \neq \dots \neq j_N}$ means summation over all $N$-tuples $(j_1, \dots, j_N) \in \{1, \dots, J\}^N$ for which $j_\alpha \neq j_{\alpha'}$ unless $\alpha = \alpha'$. A key observation~\cite{Drautz2019-er} was that we can also write
\begin{equation} \label{eq:proj:drautz_trick}
    \calB_{\bn\bl i}\big(\{\br_j\}_{j = 1}^J\big)
    =
    \frac{1}{N!}
    \sum_{j_1, \dots, j_N}
    \calB_{\bn\bl i}(\br_{j_1}, \dots, \br_{j_N})
    +
    W_{N-1}(\{\br_j\}_{j=1}^J),
\end{equation}
where $W_{N-1}$ is a polynomial with interaction-order $N-1$, i.e., it can be written as a sum over terms each of which depends on at most $N-1$ neighbours. The polynomial $W_{N-1}$ may be interpreted as balancing the unphysical self-interaction contributions of atoms that have been introduced by the modified summation. Its precise form is unimportant for now, as we simply drop it from the order-$N$ basis and absorb it into the order-$(N-1)$ basis.

In this way, we obtain a new symmetric  basis function
\begin{equation} \label{eq:eff:drautz_B_def1}
    B_{\bn\bl i}\big(\{\br_j\}_{j = 1}^J\big)
    :=
    \frac{1}{N!}
    \sum_{j_1, \dots, j_N}
    \calB_{\bn\bl i}(\br_{j_1}, \dots, \br_{j_N}),
\end{equation}
which we now manipulate to relate it to the density projections $A_{nlm}$ defined in \eqref{eq:proj:defn_Anlm} and the correlations \eqref{eq:defn_correlations}. Inserting the definition of $\calB_{\bn\bl i}(\br_{j_1}, \dots, \br_{j_N})$ from \eqref{eq:symm:defn_calB_Nparticles} we obtain
\begin{align*}
    B_{\bn\bl i}(\{\br_j\}_{j = 1}^J)
    &=
    \frac{1}{N!}
    \sum_{j_1, \dots, j_N}
    \sum_{\bbm \in \calM_\bl} 
    \revii{\calU^{\bn\bl}_{\bbm i}}
    \sum_{\sigma \in S_N} \phi_{\bn \bl \bbm}\big(\br_{j_{\sigma 1}},
        \dots, \br_{j_{\sigma N}} \big)
    \\
    &=
    \sum_{\bbm \in \calM_\bl} \revii{\calU^{\bn\bl}_{\bbm i}}
    \sum_{j_1, \dots, j_N}
    \phi_{\bn \bl \bbm}\big(\br_{j_1},
        \dots, \br_{j_N} \big)
    \\
    &=
    \sum_{\bbm \in \calM_\bl} \revii{\calU^{\bn\bl}_{\bbm i}}
    \sum_{j_1, \dots, j_N = 1}^J
    \prod_{\alpha = 1}^N
    \phi_{n_\alpha l_\alpha m_\alpha}\big(\br_{j_\alpha}\big)
    \\
    &=
    \sum_{\bbm \in \calM_\bl} \revii{\calU^{\bn\bl}_{\bbm i}}
    \prod_{\alpha = 1}^N
    \sum_{j = 1}^J
    \phi_{n_\alpha l_\alpha m_\alpha}\big(\br_{j}\big).
\end{align*}
Thus, inserting the definition \eqref{eq:defn_correlations} we obtain the alternative expression
\begin{equation} \label{eq:proj:B_kli_efficient}
    B_{\bn\bl i}(\{\br_j\}_{j = 1}^J)
    :=
    \sum_{\bbm \in \calM_\bl} 
    \revii{\calU^{\bn\bl}_{\bbm i}}
    A_{\bn\bl\bbm}(\{\br_j\}_{j = 1}^J),
\end{equation}
which avoids both the $N!$ cost for symmetrising the basis as well as the
$\binom{J}{N}$ cost of summation over all order $N$ clusters within an atomic
neighbourhood. 

We denote the resulting basis by
\begin{equation}
    \label{eq:B_basis}
       {\bm B}_N := \big\{
        B_{\bn\bl i} \,\big|\,
        (\bn, \bl) \in \N^{2N} \text{ ordered},
        {\textstyle \sum} \bl \text{ even},
        i = 1, \dots, n_{\bn\bl} \big\},
\end{equation}
and obtain the following result, which can loosely also be stated as follows: ``the symmetry-adapted $N$-correlations, $N \in \N,$ form a complete basis of symmetric polynomials.''

\begin{theorem} \label{th:density_acetype_basis}
    Fix $N \geq 1$ and let $\calP$ be a radial basis satisfying Assumption~\ref{as:P}, then $\bigcup_{n = 1}^{N} {\bm B}_n \subset C^t_{\rcut}(\Omega_\ro^N)$ is linearly independent and
    \[
        C^t_{\rcut}(\Omega_\ro^N) \subset \clospan{\bigcup_{n = 1}^{N} {\bm B}_n}{C^t}.
    \]
\end{theorem}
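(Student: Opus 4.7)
The plan is to reduce this theorem to Theorem~\ref{th:symm:approx_calB_basis} via the Drautz identity~\eqref{eq:proj:drautz_trick}, in the form
\[
    \calB_{\bn\bl i}(\{\br_j\}_{j=1}^J) = B_{\bn\bl i}(\{\br_j\}_{j=1}^J) + W_{N-1}(\{\br_j\}_{j=1}^J),
\]
which expresses the symmetric basis function $\calB_{\bn\bl i} \in {\bm \calB}_N$ as the corresponding density-coupled function $B_{\bn\bl i} \in {\bm B}_N$ plus a remainder $W_{N-1}$ of interaction order at most $N-1$. The argument proceeds by induction on $N$; the base case $N=1$ is immediate since ${\bm B}_1 = {\bm \calB}_1$ and Theorem~\ref{th:symm:approx_calB_basis} applies directly.

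For the spanning property, any $f \in C^t_{\rcut}(\Omega_\ro^N)$ lies in the closed span of ${\bm \calB}_N$ by Theorem~\ref{th:symm:approx_calB_basis}, and the display above rewrites each generator $\calB_{\bn\bl i}$ as an element of ${\rm span}({\bm B}_N)$ plus the remainder $W_{N-1}$. Because $W_{N-1}$ has interaction order at most $N-1$, it decomposes as a sum over the $(N-1)$-subsets of the $N$ atoms of a single symmetric summand $w \in C^t_{\rcut}(\Omega_\ro^{N-1})$. The inductive hypothesis applied to $w$ on $\Omega_\ro^{N-1}$, together with the canonical extension~\eqref{eq:proj:extension_calB}, approximates $W_{N-1}$ by elements of $\bigcup_{n=1}^{N-1}{\bm B}_n$, completing the inductive step.

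For linear independence, the same Drautz identity defines a block-upper-triangular change of basis (blocked by interaction order) from $\bigcup_{n=1}^N {\bm B}_n$ to $\bigcup_{n=1}^N {\bm \calB}_n$ with identity on the diagonal blocks, so it suffices to prove independence of $\bigcup_{n=1}^N {\bm \calB}_n$ as a subset of $C^t_{\rcut}(\Omega_\ro^N)$. The main obstacle is precisely this: the $C^t$-closed span of ${\bm \calB}_N$ already contains every ${\bm \calB}_n$ with $n<N$, so independence does not follow from Theorem~\ref{th:symm:approx_calB_basis} alone. To resolve it I would exploit the locality built into Assumption~\ref{as:P}, namely $\calP \subset C^t_{\rcut}$, which forces $\phi_{nlm}(\br)=0$ whenever $|\br|\geq\rcut$. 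For a configuration in $\Omega_\ro^N$ with exactly $K$ atoms inside the cutoff ball and $N-K$ atoms at radius $\geq \rcut$, every $\calB_{\bn\bl i}$ of order $n>K$ then vanishes, since each of its cluster summands must contain a zeroed factor. A nested induction on $K$, peeling off one order at a time and invoking the linear independence of ${\bm \calB}_K$ on $\Omega_\ro^K$ from Theorem~\ref{th:symm:approx_calB_basis}, finally isolates the coefficients order by order.
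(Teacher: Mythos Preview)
Your overall route matches the paper's, whose entire proof reads ``This result is a direct corollary of Theorem~\ref{th:symm:approx_calB_basis} and of \eqref{eq:proj:drautz_trick}.'' You have supplied the details that sentence hides; the inductive spanning argument and the atoms-outside-the-cutoff device for separating interaction orders are both correct.

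There is, however, a genuine gap in the linear-independence step. Your reduction ``block-upper-triangular change of basis from $\bigcup_n{\bm B}_n$ to $\bigcup_n{\bm\calB}_n$'' implicitly assumes each remainder $W_{n-1}$ lies in the \emph{finite} span of $\bigcup_{k<n}{\bm\calB}_k$. Under Assumption~\ref{as:P} alone this can fail: the self-interaction terms contain products $P_{n_1}(r)P_{n_2}(r)$, and nothing forces such a product to be a finite combination of the $P_k$ (e.g.\ for the basis of \S\ref{sec:radial} the product carries $f_{\rm cut}^2$ rather than $f_{\rm cut}$). So the two families need not span the same space, and independence of one does not formally transfer to the other. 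What \emph{does} survive is the weaker fact that $B^{(n)}-\calB^{(n)}$ has interaction order at most $n-1$ and is built from the $\phi_{nlm}$, hence each of its cluster summands vanishes whenever one of its arguments leaves the cutoff ball. That already suffices, provided you apply your atoms-outside trick directly to the $B$-relation rather than only after passing to the $\calB$'s: given $\sum_{n\leq N} c^{(n)} B^{(n)}=0$, substitute $B^{(N)}=\calB^{(N)}+W_{N-1}$ and apply the iterated differences $\Delta_1\cdots\Delta_N$, where $\Delta_j f:=f-f|_{\br_j\to\br^\ast}$ for a fixed $|\br^\ast|\geq\rcut$. Every contribution of interaction order $\leq N-1$ is annihilated (no summand can depend on all $N$ variables), while $\Delta_1\cdots\Delta_N\calB^{(N)}=\calB^{(N)}$ because $\calB^{(N)}$ already vanishes when any single argument is sent outside the cutoff. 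This isolates $\sum c^{(N)}\calB^{(N)}=0$, whence $c^{(N)}=0$ by Theorem~\ref{th:symm:approx_calB_basis}; sending one atom outside then reduces the remaining relation to $\Omega_\ro^{N-1}$ and the downward induction closes.
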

\begin{proof}
    This result is a direct corollary of Theorem~\ref{th:symm:approx_calB_basis} and of \eqref{eq:proj:drautz_trick}.
\end{proof}

In light of the results of this section we make the following formal definition. 

\begin{definition}
    An {\em atomic cluster expansion} (ACE) is an expansion of an atomic property in terms of the basis $\bigcup_{N = 1}^\infty {\bm B}_N$, of symmetry-adapted $N$-correlations~\cite{Drautz2019-er}. More generally we will use the label ACE to describe approximations and expansions building on this basis.
\end{definition}

\subsection{Basis evaluation cost}
A concrete finite basis is specified by a finite set of $(\bn, \bl)$ tuples
$\Spec \subset \bigcup_{N = 1}^\infty \N^N \times \N^N$; for more details see \S~\ref{sec:implementation:basisconstruction}. This results in the basis
\[
    {\bm B} := \big\{ B_{\bn\bl i} \b|  
        (\bn, \bl) \in \Spec, i = 1, \dots, n_{\bn\bl} \b\}.
\]
The basis functions $B_{\bn\bl i}$ are then computed from the correlations $A_{\bn\bl\bbm}$ via a sparse matrix-vector operation, which is fast and efficient. We therefore focus on the cost of evaluating the purely permutation-invariant basis. 

Given a RPI basis specification $\Spec$, we can define the resulting permutation-invariant basis
\[
    {\bm A} := 
    \b\{
        A_{\bn\bl\bbm} \,:\, 
        (\bn, \bl) \in \Spec, \quad 
        |m_\alpha| \leq l_\alpha, \quad 
        (\bn, \bl, \bbm) \text{ ordered}
    \b\}.
\]
The size of ${\bm A}$ can be readily estimated from the choice of $\Spec$, but to simplify the presentation we provide cost estimates directly in terms of ${\bm A}$. 

To evaluate ${\bm A}$ we must first evaluate the corresponding one-particle basis,  
\[
    {\bm A}_1 := \big\{ A_{nlm} \,:\, 
        \exists (\bn, \bl) \in \Spec, \alpha \in \N \text{ s.t. }
        (n, l) = (n_\alpha, l_\alpha), 
        |m_\alpha| \leq l_\alpha \big\};
\]
that is, all $A_{nlm}$ occuring in the evaluation of ${\bm A}$.
Through recursive evaluation of the radial as well as spherical harmonics basis the cost will be approximately 
\[
  {\rm COST}({\bm A}_1) \approx \big[ \max n \times (\max l)^2 \big] \times N_{\rm at},
\]
where $N_{\rm at}$ is the number of atoms in the environment that is evaluated.
This cost scales as any three-dimensional tensor approximation. A significant bottleneck is that $N_{\rm at}$ scales cubically with the cutoff radius, which motivates ideas such as the multiscale basis suggested in \S~\ref{sec:radial:remarks}(7).

Once the $\{A_{nlm}\}$ have been evaluated the basis ${\bm A}$ can be obtained simply by multiplications, resulting in 
\[
    {\rm COST}({\bm A})  \approx 
    {\rm COST}({\bm A}_1)
    + 
    \sum_{(\bn, \bl) \in \Spec} {\rm len}(\bn),
\]
where ${\rm len}(\bn) := N$ for $\bn \in \N^N$. 
This additional cost scales linearly in the number of basis functions and linearly in the body-order, which shows that the basis functions  
\eqref{eq:proj:B_kli_efficient} already give a highly efficient representation of the space of symmetric polynomials.

\subsection{Recursive representation: towards optimal evaluation cost}
\label{sec:graph_representation}
\def\calK{\mathcal{K}}
Although the natural basis evaluation scheme outlined in the previous section is already fairly efficient, it can be improved further. \revii{Related constructions to the one we propose here are used in \cite{Shapeev2016-pd} for the efficient evaluation of MTPs and in~\cite{Nigam2020-zj} for the recursive construction of {\em covariant} correlations.}

To explain this evaluation scheme, it is convenient to enumerate the one-particle basis by \revii{${\bm A}_1 := \{ A_{n_k l_k m_k} : k = 1, \dots, K \}$}, then we can write 
\[
    A_{\bn\bl\bbm} = \prod_{\alpha = 1}^N A_{n_\alpha l_\alpha m_\alpha} 
        = \prod_{\alpha = 1}^N A_{k_\alpha} = A_{\bk},   
\]
where we identify $A_{k} = A_{n_k l_k m_k}$ and $\bk = (\bn, \bl, \bbm)$. With this notation we can write ${\bm A} = \{ A_{\bk} : \bk \in \calK \}$ where $\calK \subset \bigcup_{N=1}^\infty \N^N$.

Let $\bk \in \N^{N_1+N_2}, \bk^{(1)} \in \N^{N_1}, \bk^{(2)} \in \N^{N_2}$ such that $A_{\bk} = A_{\bk^{(1)}} A_{\bk^{(2)}}$, then we say that $\bk \equiv \bk^{(1)} \cup \bk^{(2)}$. If all $\bk \in \calK$ have such a decomposition, then the basis ${\bm A}$ can be stored in terms of a directed acyclic graph where each node $\bk \in \calK$ represents a basis function $A_{\bk}$, ${\rm len}(\bk) > 1$ with exactly two incoming edges $(\bk^{(1)}, \bk)$ and $(\bk^{(2)}, \bk)$. In this case the cost of evaluating the basis would be optimal, requiring only one arithmetic operation for evaluating each basis function.

In \revii{practice}, due to the condition $\sum_\alpha m_\alpha = 0$ not all $A_\bk, \bk \in \calK$ have such a decomposition. For example, if $\bl = (1,1,1,1)$, $\bbm = (1, 1, -1, -1)$ can be written as a simple product, 
\[
    A_{\bk} = A_{(k_1, k_3)} A_{(k_2, k_4)} 
\]
\revii{where $(k_1, k_3) = (k_2, k_4)$ both stand for $\bl^{(j)} = (1,1), \bbm^{(j)} = (1,-1)$. In this case one arithmetic operation, instead of three are required. The performance improvement becomes increasingly significant with increasing interaction order.

On the other hand, basis functions with $\bl = (2,2, 1, 1), \bbm = (1,1,-1,-1)$ or with $\bl = (3,3,3,3), \bbm = (3,-1,-1,-1)$ cannot be split in this way, since any possible splitting either violates either the $\sum l_t = {\rm even}$ or the $\sum m_t = 0$ condition. 
In order to use the recursive evaluation idea for these cases, we must add {\em auxiliary} basis functions to ${\bm A}$, resulting in an extended permutation-invariant basis set 
\[
    {\bm A}^{\rm ext} \supset {\bm A}.
\]
It is easy to see that such an extended set can always be constructed. For example, for $\bl = (2,2, 1, 1), \bbm = (1,1,-1,-1)$ we would add the basis functions 
\[
   A_{(n_1, n_3), (2, 1), (1, -1)} \qquad \text{and} \qquad 
   A_{(n_2, n_4), (2, 1), (1, -1)}
\]
whose product will produce 
\begin{align*}
    A_{(n_1,n_2,n_3,n_4), (2,2,1,1), (1,1,-1-1)}
    &= 
    A_{n_1, 2, 1} A_{n_2, 2, 1} A_{n_3, 1, -1} A_{n_4, 1, -1} \\ 
    &= A_{(n_1, n_3), (2, 1), (1, -1)} A_{(n_2, n_4), (2, 1), (1, -1)}
\end{align*}
The computational cost of evaluating ${\bm A}$ in this way requires exactly $\# {\bm A}^{\rm ext}$ multiplications. The choice of extended basis set ${\bm A}^{\rm ext}$ is non-unique. Optimising this representation, to potentially  obtain an {\em optimal} evaluation complexity will be explored in a separate work. In \S~\ref{sec:implementation} we present a simple first heuristic and promising benchmark results.

The original RPI basis ${\bm B}$ can still be evaluated without changes (algorithmically, the sparse matrix multiplication ${\bm A}^{\rm ext} \to {\bm B}$ will simply contain additional empty rows).  }

\subsection{Extension to multi-component systems}
\label{sec:app:multi}
Multi-component systems are describ\-ed by position, atomic number pairs $(\br_j, z_j)$. The permutation invariance is now only with respect to like atoms, i.e., atoms with the same atomic number. Symmetric polynomial basis functions are now obtained by
\begin{align*}
    B^{(\zeta)}_{\bz\bn\bl i }(\{(\br_j, z_j)\})
    &
    :=
    \sum_{\bbm} \calU^{\bn\bl}_{i \bbm} A_{\bz\bn\bl\bbm}(\{\br_j\}, \{z_j\}), \\
    A_{\bz\bn\bl\bbm}
    &:= \prod_{\alpha = 1}^N
        A_{z_\alpha n_\alpha l_\alpha m_\alpha} \\
    A_{z n l m}
    &:=
    \sum_{j} \delta(z_j - z) \phi_{nlm}(\br_j).
\end{align*}
A natural way to interpret (and extend; see \S~\ref{sec:composition}) this expression is to think of $A_{znlm}$ as projecting a density in the $(z, \br)$ space to the basis $\phi_{\zeta nlm}(\br, z) := \delta(z - \zeta) \phi_{nlm}(\br)$. 

The $\calU^{\bn\bl}_{i \bbm}$ coupling coefficients are the same as in the single-species case, i.e., they are entirely independent of the species involved in the basis definition. The superscript $(\zeta)$ indicates that this is a basis function for a centre-atom of species $\zeta$.

The additional $z$-dependence for multi-component systems increases the dimensionality of the configuration space and can significantly reduce the effectiveness of the density projection. The practical consequences of this efficiency deterioration, and means to control it for example via sparsification or compositional structures, is yet to be explored in detail.

\section{Polynomial Radial Bases}
\label{sec:radial}
We have left the specification of the radial basis $\calP$ to the very end, in order to emphasize that very little needs to be assumed about it. Indeed, the choice of radial basis leaves significant freedom for optimisation.
Drautz~\cite{Drautz2019-er} proposes a concrete construction, which appears to work well in \revii{practice}, but leaves room for exploration and improvements.

\subsection{Orthogonal polynomials}
\label{sec:orth_radial}
We propose the following general procedure to obtain radial bases $\calP$, satisfying Assumption~\ref{as:P}, which are orthogonal with respect to a {\em user-defined} inner product, and further increase the design space for the radial basis:
\begin{enumerate}
    \item \revi{Choose $\rcut, \ro \geq 0$} specifying the domain $[\ro, \rcut]$ on which we require orthogonality;
    \item \revi{Choose} a smooth and smoothly invertible coordinate transformation $\xi : [r_0, \rcut] \to [a, b]$
    \item \revi{Choose} a cut-off function $\fcut \in C^t([0, \infty))$ such that $\fcut > 0$ in $[\ro, \rcut)$ and which vanishes in $[\rcut, \infty)$;
    \item \revi{Choose} an orthogonality measure $\rho$ on $[\ro, \rcut]$.
\end{enumerate}
We will write $x = \xi(r), r = \xi^{-1}(x)$. The measure $\rho$ gives rise to a measure $\rho_x$ on $[a, b]$ via $\rho_x = |\xi'| \rho$. We then specify a second measure
\[
    \tilde\rho_x(dx) := (\fcut \circ \xi^{-1})(x)^2 \rho_x(dx).
\]
There exists a unique sequence of polynomials $J_n(x), n \in \N$ such that (cf. \cite[Sec. 11.4]{Powell1981-bg})
\[
    J_0(x) = c_0, \quad J_1(x) = c_1 x, \quad
    \int_{a}^b J_{n}(x) J_{n'}(x) \tilde{\rho}_x(dx) = \delta_{nn'},
\]
where $c_0, c_1$ are normalisation factors, and moreover, the $J_n$ can be evaluated by a \revii{recurrence} relation
\begin{equation} \label{eq:rad:3ptrec}
    J_{n} = (s_{n} x + t_n) J_{n-1} + u_{n} J_{n-2},
\end{equation}
with explicit expressions for the coefficients $s_n, t_n, u_n \in \R$ which can be evaluated provided that integrals of polynomials with respect to the measure $\tilde\rho_x$ can be evaluated. This provides a fast and numerically stable means to evaluate the $J_n$. (See \cite[Sec. 11.4]{Powell1981-bg} for the details.)

After defining the $J_n(x)$ polynomials we define the radial basis to be
\begin{equation} \label{eq:rad:general_basis}
    \calP := \big\{ P_n := (J_n \circ \xi) \fcut  \,\big|\, n \in \N \big\}.
\end{equation}
We immediately obtain the following result.

\begin{proposition} \label{th:rad:general_rad_basis}
    The radial basis $\calP$ satisfies Assumption~\ref{as:P}, and in addition is orthogonal in $L^2(\rho)$; that is,
    \[
        \< P_n, P_{n'} \>_{L^2(\rho)} :=
        \int_{\ro}^{\rcut}  P_n(r) P_{n'}(r) \rho(dr) = \delta_{nn'}.
    \]
    In particular, with this choice of radial basis, the inner product $\<\!\< \cdot, \cdot \>\!\>$ in \eqref{eq:symm:abstract_ip} (with respect to which the basis $\calB_N$ is orthonormal) has the representation
    \[
        \<\!\< \phi_{\bn\bl\bbm}, \phi_{\bn'\bl'\bbm'} \>\!\> =
        \prod_{\alpha = 1}^N
        \< P_{n_\alpha}, P_{n_\alpha}' \>_{L^2(\rho)}
        \< Y_{l_\alpha}^{m_\alpha}, Y_{l_\alpha'}^{m_{\alpha}'} \>_{L^2(\bbS^2)}.
    \]
\end{proposition}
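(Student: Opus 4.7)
The proof splits into three essentially independent assertions, of which only the density statement in Assumption~\ref{as:P} requires genuine work. I would begin with the $L^2(\rho)$-orthogonality, which reduces to a direct change of variables. Substituting $P_n = (J_n\circ\xi)\fcut$, pushing forward through $x = \xi(r)$ with the pushforward relation $\rho_x = |\xi'|\rho$, and recalling the definition of $\tilde\rho_x$ yields
\[
\int_\ro^\rcut P_n(r)\,P_{n'}(r)\,\rho(dr) = \int_a^b J_n(x)\,J_{n'}(x)\,(\fcut\circ\xi^{-1})(x)^2\,\rho_x(dx) = \int_a^b J_n J_{n'}\,\tilde\rho_x(dx) = \delta_{nn'},
\]
by the defining orthonormality of the $J_n$. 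Orthonormality also gives linear independence of $\calP$. Regularity $P_n \in C^t_{\rcut}([\ro,\infty))$ is immediate from the chain rule ($J_n$ is polynomial, $\xi$ is smooth) combined with the support property of $\fcut$.

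The density statement in Assumption~\ref{as:P} is the delicate part. Fix $f \in C^\infty([\ro,\infty))$ with $f \equiv 0$ on $[\rcut,\infty)$. The plan is to define $g := f/\fcut$ on $[\ro,\rcut)$ and extend it by $0$ past $\rcut$, and to argue that $g \in C^t([\ro,\infty))$. Since all derivatives of $f$ vanish at $\rcut$, Taylor's theorem provides $|f(r)| \leq C_k(\rcut - r)^k$ for every $k$, while $\fcut$ has only finite-order vanishing there; a careful inductive application of the quotient rule then shows that $g$ and its first $t$ derivatives extend continuously by zero across $\rcut$. This extension estimate is the technical heart of the argument and the main obstacle. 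Once $g$ is in hand, I would compose with $\xi^{-1}$ to obtain $\tilde g := g\circ\xi^{-1} \in C^t([a,b])$, invoke Weierstrass/Bernstein approximation (as in the proof of Proposition~\ref{th:approx_nonsym}) to find a polynomial $p_\varepsilon$ with $\|\tilde g - p_\varepsilon\|_{C^t([a,b])}$ arbitrarily small, and expand $p_\varepsilon = \sum_n c_n J_n$ using that the three-term-recurrence-generated family spans all polynomials. The resulting finite linear combination $\sum_n c_n P_n = \fcut\cdot(p_\varepsilon\circ\xi)$ then satisfies
\[
\Bigl\| f - {\textstyle\sum}_n c_n P_n \Bigr\|_{C^t} = \bigl\| \fcut\cdot(\tilde g - p_\varepsilon)\circ\xi \bigr\|_{C^t} \leq C\,\|\fcut\|_{C^t}\,\|\tilde g - p_\varepsilon\|_{C^t([a,b])},
\]
yielding the required density of $\calP$.

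Finally, the product representation of $\<\!\<\cdot,\cdot\>\!\>$ is an immediate consequence of the two orthonormalities now at our disposal. Using the just-established orthonormality of $\calP$ in $L^2(\rho)$ together with the standard orthonormality of the spherical harmonics in $L^2(\bbS^2)$,
\[
\prod_{\alpha=1}^N \<P_{n_\alpha},P_{n_\alpha'}\>_{L^2(\rho)}\,\<Y_{l_\alpha}^{m_\alpha},Y_{l_\alpha'}^{m_\alpha'}\>_{L^2(\bbS^2)} = \prod_\alpha \delta_{n_\alpha n_\alpha'}\delta_{l_\alpha l_\alpha'}\delta_{m_\alpha m_\alpha'} = \delta_{\bn\bn'}\delta_{\bl\bl'}\delta_{\bbm\bbm'},
\]
which matches $\<\!\<\phi_{\bn\bl\bbm},\phi_{\bn'\bl'\bbm'}\>\!\>$ by the definition \eqref{eq:symm:abstract_ip}, concluding the proof.
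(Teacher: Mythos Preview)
Your treatment of the orthogonality and of the tensor-product representation of $\langle\!\langle\cdot,\cdot\rangle\rangle$ is correct and matches the paper, which dismisses both as following ``by construction.''

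For the density statement your route differs from the paper's. You divide $f$ directly by $\fcut$ and argue that, because $f\in C^\infty$ is flat at $\rcut$, the quotient $g=f/\fcut$ extends to a $C^t$ function. The paper instead uses an $\epsilon$-shift: it replaces $f$ by $f_\epsilon(r)=f(r+\epsilon)$, which is supported in $[\ro,\rcut-\epsilon]$ where $\fcut\ge\delta>0$, so that $f_\epsilon/\fcut$ is automatically $C^t$ with no work, and then combines $\epsilon\to 0$ with polynomial approximation via a diagonal argument.

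The practical difference is this: your assertion that ``$\fcut$ has only finite-order vanishing'' is not guaranteed by the hypotheses of \S\ref{sec:orth_radial}, which require only $\fcut\in C^t$, $\fcut>0$ on $[\ro,\rcut)$, and $\fcut=0$ on $[\rcut,\infty)$. If $\fcut$ vanishes to infinite order (e.g.\ $\fcut(r)=\exp(-1/(\rcut-r)^2)$), then $f/\fcut$ can blow up even for flat $f$, and your quotient-rule induction fails. For the recommended choice $\fcut(r)=|\xi(\rcut)-\xi(r)|^p$ your argument does go through, so the gap is a mismatch between your argument and the stated generality rather than a fatal flaw. The paper's shift trick sidesteps this entirely, avoids what you call ``the technical heart of the argument,'' and in fact proves the stronger statement that all of $C^t_{\rcut}([\ro,\infty))$ (not just the $C^\infty$ subspace required by Assumption~\ref{as:P}) lies in the closed span.
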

\begin{proof}
    The orthogonality properties follow by construction. We only need to show that ${\rm span}\calP$ is dense in $C^t_{\rcut}([\ro, \infty))$.

    Let $f \in C^t_{\rcut}([\ro, \infty))$ and define $f_\epsilon(x) := f(x + \epsilon)$, then $f_\epsilon = 0$ in $[\rcut-\epsilon, \infty)$ and it is easy to see that $f_\epsilon \to f$ in $C^t([\ro, \infty))$.
    Moreover, since $f_{\rm cut} \geq \delta > 0$ in $[\ro, \rcut-\epsilon]$ it follows that $g(x) := (f_\epsilon / f_{\rm cut}) \circ \xi^{-1} \in C^t([\ro, \rcut-\epsilon])$ and can therefore be approximated to within arbitrary accuracy by a polynomial $p(x)$.
    It follows readily that $p(\xi(r)) f_{\rm cut}(r)$ approximates $f_\epsilon$, and after invoking a diagonal argument we obtain the desired density.
\end{proof}

\subsection{The radial basis design space}
\label{sec:radial:remarks}
\begin{enumerate}[wide]
    \item The choice of transform is very general; typical choices are
    \[
        \xi(r) = (r/\rnn)^{-q}, \quad (1+r/\rnn)^{-q}, \quad  e^{-\lambda (r/\rnn)},
    \quad \dots,
    \]
    Extensive testing of such transforms for {\tt aPIPs}~\cite{2019-regapips1} we did not see significant differences in performance (in the sense of convergence of the \revi{root mean square error}). Our default choice is $\xi(r) = (1 + r/\rnn)^{-2}$.
    Still, it is conceivable that an optimisation of the coordinate transform, agressively adapting it to a given training set, may lead to measurable performance improvements. A potential concern to be explored is a loss of transferability.

    \item There is significant freedom in the choice of cut-off functions. We recommend
    \[
        \fcut(r) = \big|\xi(\rcut) - \xi(r)\big|^p,
    \]
    where $p \in \{2, 3, \dots\}$. This is attractive since the resulting products $P_n(r) = J_n(x) \fcut(r)$ is again a polynomial in $x$. In particular $p = 2$ is canonical in that it is the lowest power for which we have $C^{t}$-regularity across the cutoff (recall that $t \geq 1$).

    \item A canonical choice for $\rho_x$ is the uniform measure on $[\xi(\ro),\xi(\rcut)]$. In this case, the measure $\tilde{\rho}_x$ is of the form
    \[
        \tilde{\rho}_x(dx) = |\xi(\ro)-x|^\alpha |\xi(\rcut)-x|^\beta \, dx
    \]
    where $\alpha, \beta \geq 0$; cf. item (6) below, and the polynomials $J_n(x)$ are shifted and scaled {\em Jacobi polynomials} $J_n^{\alpha,\beta}$, for which there are analytic expressions for the recursion coefficients $s_n, t_n, u_n$ \cite[Sec. 12.3]{Powell1981-bg}.

    Another natural choice for $\rho_x$ is the radial distribution function on the training set, thus incorporating information about the least squares system into the basis set.

    \item The lower bound $\ro$ need not, and usually should not, be at $0$. Instead, one should typically choose $\ro$  close to the infimum of the support of the radial distribution function of the training set. In many of our tests $\ro \in [0.6 \rnn, 0.8\rnn]$ is typical. This is related to regularisation: the interval $[\ro, \rcut]$ specifies where the potential will be regular, and we should not expend degrees of freedom in a domain where no accuracy is required since no data is provided there; see \cite{2019-regapips1} for a more detailed discussion of this issue.

    \item Related to the previous point is the idea of a two-sided cutoff, which we introduced also in \cite{2019-regapips1}, e.g.,
    \[
        \fcut(x) = |\xi(\ro) - x|^p  |\xi(\rcut)-x|^p
    \]
    This applies a standard outer cutoff at $r = r_{\rm cut}$ and an inner cutoff at $r = r_{0}$ and prevents the basis functions from oscillating and/or diverging in the domain $r \in (0, \ro)$.

    \item In the computational chemistry domain it is common  to choose a one-particle basis
    \[
        \phi_{nlm}(\br) = P_{nl}(r) Y_l^m(\hat\br),
    \]
    i.e., to let the radial basis depend on $l$. This is motivated by the \revi{Linear Combination of Atomic Orbitals (LCAO)} basis where the radial components of electron wave functions strongly depend on the angular momentum $l$~\cite[Fig.1]{MadsenPRB2011}.
    From an approximation theoretic perspective this additional freedom does not change any of our results, but enables additional optimisations of the basis set.
    A canonical formulation~\cite[\S~C.1]{Drautz2019-er} is to define
   \[
       P_{nl}(r) = \sum_{k} c_{nlk} Q_k(r),
   \]
    where $\{Q_k\}_k$ is a standard radial basis, and to optimise the $c_{nlk}$. This is formally equivalent to defining a higher radial polynomial degree, but possibly leads to a {\em sparser} basis. Whether this does lead to significant savings has not been systematically explored.

    \item A further feature would be to allow the cutoff radius $\rcut$, or radii $(\ro, \rcut)$ for inner and outer cutoffs, to be independent for each $P_{nl}$ which would open the possibility of constructing a ``multi-scale'' basis suitable for modelling long-range interactions.
\end{enumerate}

The above items point to a wide range of potential optimisations of the radial basis $\calP$. We emphasize again that most of the freedom in the design of a permutation and isometry invariant basis set within our framework lies in the choice of $\calP$.
It will therefore be crucial in future work to investigate these options further. One might aim to optimise the choice of radial basis more aggressively through a nonlinear parameter optimisation procedure.
In addition to the ideas mentioned above, such as optimising the coordinate transformation, or choosing a radial basis of the form $P_{nl}$, possibly with independent cutoff parameters, this could lead to significant performance improvements.

\section{Parameter Estimation}
\label{sec:reg}
Suppose we have constructed a finite symmetric polynomial basis set ${\bm B} \subset \bigcup_{N = 1}^{\calN} {\bm B}_N$. Via \eqref{eq:proj:B_kli_efficient} every choice of parameters ${\bm c} = (c_B)_{B \in {\bm B}}$ defines a site potential (see also \S~\ref{sec:pes})
\begin{equation} \label{eq:lsq:V}
    V_{\bm c}(\{\br_{\iota j}\}) :=
    \sum_{B \in {\bm B}} c_B B(\{\br_{\iota j}\}),
\end{equation}
where $\{\br_{\iota j}\}_{j}$ denotes a collection of atom positions relative to a centre-site $\iota$, i.e., $\br_{\iota j} = \br_j - \br_\iota$.
This then defines a potential energy surface $E_{\bm c}$,
\begin{equation} \label{eq:lsq:pes}
   E_{\bm c}(\{\br_\iota\}) :=
   \sum_{\iota} \sum_{B \in {\bm B}} c_B B\big( \{ \br_{\iota j}\}_{r_{\iota j} < \rcut} \big),
\end{equation}
which is parameterised by ${\bm c}$. In the present section we will discuss techniques and challenges surrounding the estimation of the parameters ${\bm c}$.

\subsection{Linear Least Squares}
\label{sec:lsq}
\def\calJ{\mathcal{J}}
We are given a training set $\mathcal{R}$ containing atomic configurations $R \in \R^{3J}$ (possibly combined with a cell and a boundary condition, and where $J$ does not need to be the same over the configurations),
together with corresponding observations: total potential energies $\calE_R \in \R$, forces $\calF_R \in \R^{3J}$ and possibly other quantities such as virials, force constants, and so forth. Most commonly, these are obtained from a high-fidelity electronic structure model $E^{\rm ref}$, in which case $\calE_R = E^{\rm ref}(R), \calF_R = - \nabla E^{\rm ref}(R)$. We then minimize the quadratic cost function
\begin{equation} \label{eq:lsq_fcnl}
      \calJ[E, \calR] =
      \sum_{{R} \in \mathcal{R}} \Big(
         w_{R,E}^2 \left| E({R}) - \calE_{R} \right|^2
         + w_{R,F}^2 \left| -\nabla  E(R) - \calF_{R} \right|^2
         \Big),
\end{equation}
with respect to the new potential $E$,  where $w_{E, R}$, $w_{F, R}$ are weights that may depend on the configurations $R$ as well as the observations $\calE_R, \calF_R$.

In the case \eqref{eq:lsq:pes} the parameterised potential energy surface $E_{\bm c}$ energy and forces are {\em linear} functionals of the parameters ${\bm c}$. Therefore, the minimisation of $\calJ[E_{\bm c}, \calR]$  with respect to ${\bm c}$ can be rephrased as a linear least-squares problem
\begin{equation}
   \label{eq:LSQ_system}
   \min_{\bm c} \|\Psi{\bm c} - {\bm y}\|_2^2,
\end{equation}
where ${\bm y} \in \R^{N_{\rm obs}}$ contains the weighted observations $\calE_R, \calF_R$, and $\Psi \in \R^{N_{\rm obs} \times N_{\rm basis}}$, with $N_{\rm basis}  = \# {\bm B}$ being the number of basis functions. This is then solved using a QR factorisation, or a rank-revealing QR factorisation for a simple form of regularisation.

\revii{
    We note that, in practice, the least squares formulation should include also a regularisation term. Since we are primarily concerned with {\it convergence} of the parameterisation we have ignored this here. However, in Sec.~\ref{sec:regularisation} we briefly comment on the ill-posedness of the inverse problem and connect to regularisation.
}

\subsection{Convergence in \revi{root mean square error (RMSE)}}
\label{sec:convergence_loss}
\def\RMSE{{\rm RMSE}}
One immediately expects that uniform convergence of the potential approximations implies also convergence in RMSE.
Nevertheless, it is still interesting to formalise this statement, for example with an eye to more quantitative future results, or to situations where we relinquish uniform convergence but still insist on convergence in RMSE.
For the sake of simplicity, we consider only energy and forces (but no virials) and only cluster configurations $R$ but no periodic boundary conditions.

Given a potential energy surface $E$ and a training set $\calR$, the RMSE is defined by
\begin{align*}
    \RMSE[E, \calR] &:= \Big( Z_{\calR}^{-1} \calJ[E, \calR] \Big)^{1/2},
    \qquad \text{where} \\
    Z_{\calR} &:= \sum_{R \in \calR} \big(1 + 3\# R\big).
\end{align*}
Here, $1 + 3\# R$ denotes the number of scalar observations obtained from the configuration~$R$, one total energy and $3 \# R$ forces.

Such a scaling is meaningful if the weights satisfy the scaling
\begin{equation}  \label{eq:conv_rmse:weights}
    w_{R, E} \leq \frac{\bar{w}_E}{\# R}
    \quad \text{and} \quad
    w_{R, F} \leq \bar{w}_F,
\end{equation}
where $\bar{w}_E, \bar{w}_F$ are fixed constants independent of $R$.

We consider an approximation by potential energy surfaces $E^{(\nu)}_{\bm c}$ of the form \eqref{eq:lsq:pes}, with basis  ${\bm B} = {\bm B}^{(\nu)}, \nu \in \N$. We assume the sequence of basis sets ${\bm B}^{(\nu)}$ satisfies the following properties:
\begin{enumerate}
   \item[(B.1)] {\em Convergence:} ${\bm B}^{(\nu)} \uparrow \bigcup_{N = 1}^{\calN} {\bm B}_N$ as $\nu \to \infty$.
   \item[(B.2)] {\em Completeness:} The full interaction order-$N$ basis sets ${\bm B}_N$ satisfy the conditions of Theorem~\ref{th:density_acetype_basis}.
\end{enumerate}

Convergence of the RMSE for a fixed training set is not particularly illuminating, hence we consider the convergence of the RMSE for a sequence of increasing training sets $\calR^{(\nu)}, \nu \in \N$ such that $\#\calR^{(\nu)} \uparrow \infty$ as $\nu \to \infty$.
The key point is that we allow $\# \calR^{(\nu)} \gg \# {\bm B}^{(\nu)}$ as $\nu \to \infty.$ We assume that all $\calR^{(\nu)}, \nu \in \N$ satisfy the following restrictions:
\begin{enumerate}
   \item[(R.1)] {\em Uniform separation:} there exists $\ro > 0$ such that
\[
   |\br - \br'| \geq \ro \qquad \forall \br \neq \br' \in R, \quad R \in \calR^{(\nu)}.
\]
   \item[(R.2)] {\em Model consistency:} the energies $\calE_R$ and forces $\calF_R$, $R \in \calR^{(\nu)}$, are obtained by evaluating a potential energy surface satisfying \eqref{eq:cluster expansion} and Assumption~\ref{as:VN}.
   \item[(R.3)] {\em Weight scaling:} The weights $w_{R, E}, w_{R, F}$, $R \in \calR^{(\nu)}$ satisfy \eqref{eq:conv_rmse:weights} where $\bar{w}_E, \bar{w}_F$ are independent of $\nu$.
\end{enumerate}

\begin{proposition}
   Suppose that conditions (R.1), (R.2), (R.3) as well as (B.1), (B.2) are satisfied; then
   \[
      \lim_{\nu \to \infty}
      \inf_{\bf c} \RMSE\big[ E^{(\nu)}_{\bf c}, \calR^{(\nu)} \big]
       = 0.
   \]
\end{proposition}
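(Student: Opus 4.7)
The plan is to reduce RMSE convergence to a uniform $C^1$-approximation statement that follows directly from Theorem~\ref{th:density_acetype_basis}, and then to verify that the weight scaling (R.3) exactly absorbs the dependence on configuration size. By (R.2) there is a single reference PES $E^{\rm ref}$ built from body-order components $V_N^{\rm ref}$ satisfying Assumption~\ref{as:VN}, which together with (R.1) gives $V_N^{\rm ref}|_{\Omega_\ro^N} \in C^t_{\rcut}(\Omega_\ro^N)$ for each $N \leq \calN$. Fix a tolerance sequence $\epsilon_\nu \downarrow 0$. By (B.2) and Theorem~\ref{th:density_acetype_basis}, for each $N \leq \calN$ there is a finite linear combination of elements of $\bigcup_{n=1}^N {\bm B}_n$ approximating $V_N^{\rm ref}$ in $C^t$-norm to within $\epsilon_\nu$; by (B.1), for $\nu$ large enough these finitely many basis functions all lie in ${\bm B}^{(\nu)}$, so the approximations $\tilde V_N^{(\nu)}$ assemble into a coefficient vector ${\bm c}^{(\nu)}$ with associated PES $E^{(\nu)} := E_{{\bm c}^{(\nu)}}^{(\nu)}$ via \eqref{eq:lsq:pes}.

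Because $t \geq 1$, the $C^t$-bound gives simultaneous pointwise control, of magnitude at most $\epsilon_\nu$, on values and first derivatives of $V_N^{\rm ref} - \tilde V_N^{(\nu)}$ throughout $\Omega_\ro^N$. The uniform separation (R.1), combined with the cutoff in Assumption~\ref{as:VN}(ii), yields a hard-sphere packing bound: there exists $K_{\max} = K_{\max}(\ro, \rcut)$ such that each atom has at most $K_{\max}$ neighbours within distance $\rcut$, uniformly over $R \in \calR^{(\nu)}$ and $\nu$. Hence each site energy involves at most $\sum_{N=1}^{\calN} \binom{K_{\max}}{N}$ body-order terms, and each atomic force receives derivative contributions from at most $1 + K_{\max}$ neighbouring sites, producing a constant $C = C(\ro, \rcut, \calN)$ with $|E^{\rm ref}_\iota - E^{(\nu)}_\iota| \leq C\epsilon_\nu$ per site and $|F^{\rm ref}_k - F^{(\nu)}_k| \leq C\epsilon_\nu$ per atom (as an $\R^3$-magnitude). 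Summing over sites and atoms gives
\[
 |E^{\rm ref}(R) - E^{(\nu)}(R)| \leq C(\# R)\epsilon_\nu, \qquad \|\nabla E^{\rm ref}(R) - \nabla E^{(\nu)}(R)\|_2^2 \leq C^2 (\# R)\epsilon_\nu^2 .
\]

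Inserting the weight scaling (R.3) into \eqref{eq:lsq_fcnl}, the per-configuration energy contribution satisfies $w_{R,E}^2 |E^{\rm ref}(R) - E^{(\nu)}(R)|^2 \leq \bar{w}_E^2 C^2 \epsilon_\nu^2$, since the $(\#R)^2$ in the energy error is exactly cancelled by the $(\#R)^{-2}$ in the weight; the force contribution satisfies $w_{R,F}^2 \|\nabla E^{\rm ref}(R) - \nabla E^{(\nu)}(R)\|_2^2 \leq \bar{w}_F^2 C^2 (\#R) \epsilon_\nu^2$. Summing over $R \in \calR^{(\nu)}$ yields $\calJ[E^{(\nu)}, \calR^{(\nu)}] \leq C' \epsilon_\nu^2 \sum_R (1 + 3\#R) = C' \epsilon_\nu^2 Z_{\calR^{(\nu)}}$, whence $\RMSE[E^{(\nu)}, \calR^{(\nu)}] \leq \sqrt{C'}\,\epsilon_\nu$, and taking the infimum over coefficients followed by $\nu \to \infty$ yields the claim. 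The main obstacle I expect is precisely this scaling balance: without the $(\#R)^{-1}$ decay of the energy weight, the per-configuration squared energy error would grow like $(\#R)^2$ rather than $\#R$, and the RMSE bound would fail for training sets whose configuration sizes are unbounded. A minor technicality is that Theorem~\ref{th:density_acetype_basis} must be applied finitely many times, once per $N \leq \calN$, with a common tolerance; this is harmless since $\calN$ is finite, e.g.\ by using $\epsilon_\nu/\calN$ in each application.
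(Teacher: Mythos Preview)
Your proof is correct and follows essentially the same approach as the paper: invoke Theorem~\ref{th:density_acetype_basis} via (B.1), (B.2) to get uniform $C^1$-approximation of each $V_N^{\rm ref}$, use the packing bound from (R.1) to control the number of terms per site, and then verify that the weight scaling (R.3) balances the growth in $\#R$ so that the normalised RMSE is driven to zero. The paper's own proof is only a sketch of precisely this argument; you have filled in the quantitative details (the $\binom{K_{\max}}{N}$ count, the explicit $(\#R)$ versus $(\#R)^{-1}$ cancellation) that the paper leaves implicit.
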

\begin{proof}
   The result is elementary, hence we only provide a brief sketch. Under assumptions (B.1) and (B.2), Theorem~\ref{th:density_acetype_basis} (density of  the basis) implies that there exist
   parameters ${\bf c} = {\bf c}^{(\nu)}$ such that the resulting order-$N$
   potentials as well as their gradients converge uniformly to the order-$N$
   potentials in the exact model. The separation condition (R.1) implies that
   each site energy and each force can involve at most finitely many terms with
   a uniform upper bound. Combined with the scaling of the weights, it now
   follows easily that all weighted observations converge uniformly. The
   normalisation $Z_{\calR^{(\nu)}}^{-1}$ of the RMSE then guarantees that
   $\RMSE[ E^{(\nu)}_{{\bf c}^{(\nu)}}, \calR^{(\nu)} ] \to 0,$
   and in particular the same convergence holds for the optimised RMSE.
\end{proof}

\begin{remark}
While (R.1) and (R.3) are natural and  mild restrictions, (R.2) is more severe since the observations $\calE_R, \calF_R$ are typically obtained from electronic structure models that clearly do not satisfy it.
This issue goes back to the discussion in \S~\ref{sec:pes} whether one can {\em in principle} approximate an electronic structure model PES using the body-order expansion \eqref{eq:cluster expansion}.
If this can be established, then we would need to add $\calN \to \infty$ and $\rcut \to \infty$ as $\nu \to \infty$ in order to obtain a generalised result that doesn't require the assumption (R.2).
\end{remark}

\section{Implementation and Performance}
\label{sec:implementation}
The following paragraphs provide a brief guidance on the practical implementation of the class of symmetric polynomials bases we constructed, and also serve as a summary of the concrete steps required to construct and evaluate the basis, fit the parameters and construct a site energy potential. 
An implementation based on these notes is provided in an open source Julia package {\tt ACE.jl}~\cite{ACEjl}. 
All tests we report are performed on an {\tt Intel(R) Core(TM) i7-7820HQ CPU @ 2.90GHz}, with {\tt macOS (x86-64-apple-darwin19.5.0)} operating system, {\tt Julia Version} {\tt 1.5.0-rc1}, and {\tt ACE.jl Version 0.6.9}.

\revii{
    The purpose of this section is not to document the details of our implementation~\cite{ACEjl}, nor of the specific choices of certain model parameters we make. Instead, we only summarize the choices and main steps that must be considered in constructing and implementing a basis, fitting the parameters, and eventually evaluating the resulting site potential. Some of the model parameter choices are left deliberately vague as they will differ significantly across different modelling scenarios (e.g., metal, covalent systems, molecules, \dots). Exploring the possibility of a systematic basis construction will be the focus of future work.
}

\subsection{Construction of the symmetric basis}
\label{sec:implementation:basisconstruction}
Within the framework of this paper, a symmetric polynomial basis is specified by the following three steps.
\begin{enumerate}[wide]
    \item Choose a radial basis $\calP = \{P_n\}$\revii{, following the discussion Sec.~\ref{sec:radial}}.
    \item Specify a finite set of tuples
      \[
            \Spec \subset \{ (\bn, \bl) \in  \N^N \times \N^N : N \geq 0 \},
      \]
    where each $(\bn, \bl)$ specifies one or more \revii{many-body} basis functions.
     Their interaction order $N$ is implicitly given by its length. A canonical approach to choosing $\Spec$ is to specify a degree $d$ and construct all tuples $(\bn, \bl)$ up to some maximum $\calN$ such that
     \[
         {\rm deg}(\bn, \bl) \leq d.
     \]
     with a suitable {\em degree function} ${\rm deg}$, e.g., the tensor degree, ${\rm deg}(\bn, \bl) = \max_\alpha \max(n_\alpha, l_\alpha)$, the total degree ${\rm deg}(\bn, \bl) = \sum_\alpha n_\alpha + l_\alpha$, or weighted versions thereof. More general choices are of course possible and interesting.

    \item Construct the coupling coefficients: for each $(\bn, \bl) \in \Spec$ compute the coefficients $\calU_{i\bbm}^{\bn\bl}$, $\bbm \in \calM_\bl^0, i = 1, \dots, n_{\bn\bl}$. \revii{This can be performed, either through the numerical construction~\S~\ref{sec:symm:rot} or the explicit construction~\S~\ref{sec:rotinv:Drautz}, followed by the additional step incorporating the permutation symmetry.}
    Then the resulting symmetric polynomial basis functions $B_{\bn\bl i}$, $(\bn, \bl) \in \Spec, i = 1, \dots, n_{\bn\bl}$, are given by \eqref{eq:proj:B_kli_efficient}.
\end{enumerate}

\subsection{Evaluation of the symmetric basis}
\label{sec:eval_basis}
Given a finite basis set as constructed in \S~\ref{sec:implementation:basisconstruction}, we now outline its evaluation to obtain \revii{total potential energy. Other properties can be computed analogously,} for example forces or virials through differentiation with respect to positions and cell shape.

Given a configuration $\{\br_\iota\}$, the total potential energy is defined as $E = \sum_\iota E_\iota$ where each site energy $E_\iota$ is represented by the symmetric polynomial basis. Thus, the resulting basis for total energy is given by
\[
    B^E := \sum_\iota B^{(\iota)},
\]
where $B^{(\iota)}$ is the symmetric polynomial basis evaluated at site $\iota$ and is computed as follows:

\begin{enumerate}
    \item Determine all atoms at positions $\br_j$ such that $r_{\iota j} = |\br_{\iota j}| < \rcut$, where $\br_{\iota j} = \br_j - \br_\iota$.
    \item Evaluate the one-particle basis,
    \[
        A_{nlm}^{(\iota)} := \sum_j \phi_{nlm}(\br_{\iota j})
    \]
    \item Evaluate the permutation-invariant many-body basis,
    \[
        A_{\bn\bl\bbm}^{(\iota)} := \prod_{\alpha = 1}^{N_{\bn\bl}} A_{n_\alpha l_\alpha m_\alpha}^{(\iota)},
        \qquad
        (\bn, \bl) \in \Spec,  \quad \bbm \in \calM_\bl^{0},
    \]
    where $\bn, \bl \in \N^{N_{\bn\bl}}$. \revii{This can be performed using either the naive formula or the recursive representation described in \S~\ref{sec:graph_representation}.}
    \item Evaluate the  RPI basis 
    \[
        B^{(\iota)}_{\bn \bl i}  := \sum_{\bbm \in \calM_{\bl}^0} \calU_{i\bbm}^{\bn \bl} A_{\bn \bl \bbm}^{(\iota)},
        \qquad (\bn, \bl) \in \Spec, \quad i = 1, \dots, n_{\bn\bl}.
    \]
    \revii{This can be conveniently implemented as a sparse matrix-vector multiplication.}
\end{enumerate}

\subsection{Construction of the site potential}
\label{app:implementation:V}
After the basis $\{B_{\bn\bl i}\}$ has been constructed the coefficients ${\bf c} = (c_{\bn\bl i})$ must be determined, for example, using the procedures discussed in \S~\ref{sec:reg}. \revii{We will not give further details on this step since it simply involves converting the loss minimization into a linear least squares problem, which is then solved using the QR factorisation or the iterative LSQR method.}

\revii{Once the parameters have been determined, a site potential is then defined by}
\[
    V\big(\{ \br_{\iota j}\}\big) = V_{\bf c}\big(\{ \br_{\iota j}\}\big)
    = \sum_{ (\bn, \bl) \in \Spec} \sum_{i = 1}^{n_{\bn\bl}}
            c_{\bn\bl i} B_{\bn \bl i}(\{\br_{\iota j}\}),
\]
which can be simplified and optimised as follows:

\begin{enumerate}[wide]
    \item We combine the basis coefficients with the coupling coefficients,
    \[
        a_{\bn\bl\bbm}' := \sum_{i = 1}^{n_{\bn\bl}} c_{\bn\bl i} \calU_{i\bbm}^{\bn\bl},
        \qquad (\bn, \bl) \in \Spec, \quad \bbm \in \calM_\bl^0.
    \]
     to obtain an equivalent represention of $V$,
    \[
        V\big(\{ \br_{\iota j}\}\big)
        = \sum_{ (\bn, \bl) \in \Spec} \sum_{\bbm \in \calM_\bl^0}
                a_{\bn\bl \bbm}' A_{\bn\bl\bbm}\big(\{\br_{\iota j}\}\big).
    \]

    \item Let $\calM_{\bn\bl\bbm}$ denote the set of all $\bbm' \in \calM_\bl^0$ for which there exists a permutation $\pi$ such that $\pi(\bn) = \bn, \pi(\bl) =  \bl$ and $\pi(\bbm') = \bbm$.
    Then $A_{\bn\bl\bbm} = A_{\bn\bl\bbm'}$ for all $\bbm' \in \calM_{\bn\bl\bbm}$.
    Thus, we can further reduce the representation of $V$ by defining
        \begin{align*}
            a_{\bn\bl\bbm} &:=
                \sum_{\bbm' \in \calM_{\bn\bl\bbm}} a_{\bn\bl\bbm'}',
            \qquad \text{for } (\bn, \bl, \bbm) \in \SpecV, \qquad \text{where}  \\ 
        \SpecV &:= \big\{ (\bn, \bl, \bbm) :
                (\bn, \bl) \in \Spec, \bbm \in \calM_{\bl}^0,
                \text{ and $(\bn, \bl, \bbm)$ is  ordered} \big\}.
    \end{align*}
    We then obtain the final representation of the site potential,
    \begin{equation} \label{eq:evalV}
        V\big(\{ \br_{\iota j}\}\big)
        = \sum_{ (\bn, \bl, \bbm) \in \SpecV}
                a_{\bn\bl \bbm} A_{\bn\bl\bbm}\big(\{\br_{\iota j}\}\big).
    \end{equation}
    A particular advantage of this representation is that it is unique. Permutation symmetry is ensured by the usage of the $A_{\bn\bl\bbm}(\{\br_{\iota j}\})$ functions, which form a basis of permutation-invariant polynomials, while O(3) symmetry is now encoded in the coefficients $a_{\bn\bl \bbm}$.
\end{enumerate}

The site potential $V$ is straightforward to evaluate from \eqref{eq:evalV} after first following the steps (1--3) in \S~\ref{sec:eval_basis}, followed by evaluating \eqref{eq:evalV}. \revii{Step (4) is no longer needed since the symmetry is now encoded in the coefficients $a_{\bn\bl \bbm}$. }

\subsection{Fast gradients via adjoints }
\label{sec:fast_gradients}

\subsubsection{Standard evaluation algorithm}
\label{sec:alg:standardeval}
The standard evaluation of \eqref{eq:evalV} is unremarkable, hence we focus on the implementation of fast gradients to obtain forces and virials. This is a straightforward application of backward differentiation (the adjoint method~\cite{Strang2007-nj}). The backward differentiation formula for the standard representation can be found in~\cite{Drautz2019-er}. Here we include an algorithm for the sake of completeness. We conveniently incorporate parts of the backward pass into the forward pass which simplifies implementation and saves storage.

\revii{In the following algorithm the symbols $W_{nlm}$ are the ``adjoints'' that must be computed to eventually assemble the gradient. We refer to~\cite{Strang2007-nj} for further details. }

\bigskip \noindent {\bf Standard Adjoint Gradient Algorithm}
\begin{enumerate}
    \item For all $n, l, m$: 
    \item \qquad $A_{nlm} \leftarrow \sum_j \phi_\revii{nlm}(\br_j)$ 
    \item \qquad \revii{$W_{nlm} \leftarrow 0$}
    \item For all $(\bn, \bl, \bbm) \in \SpecV$:
    \item \qquad $V \leftarrow V + c_{\bn\bl\bbm} \prod_{\alpha=1}^N A_{n_\alpha l_\alpha m_\alpha}$  
    \item \qquad For $\alpha = 1, \dots, N$
    \item \qquad \qquad $W_{n_\alpha l_\alpha m_\alpha} \leftarrow 
            \revii{W_{n_\alpha l_\alpha m_\alpha}} + 
            a_{\bn\bl\bbm} \prod_{\beta \neq \alpha} A_{n_\beta l_\beta m_\beta}$.
    \smallskip 
    \item For all $n, l, m$ and neighbour indices $j$:
    \item \qquad $\nabla_{\br_{\iota j}} V \leftarrow  \nabla_{\br_{\iota j}} V 
            + \revii{W_{nlm}} \nabla \phi_{nlm}(\br_{\iota j})$
\end{enumerate}

\subsubsection{Recursive evaluation algorithm}
\label{sec:alg:grapheval}
\revii{In line with \S~\ref{sec:graph_representation} we now switch to the compressed index notation where $k \equiv (n,l,m)$ and $\bk = (\bn, \bl, \bbm)$.}
To evaluate \eqref{eq:evalV} using the recursive representation introduced in \S~\ref{sec:graph_representation} we first need to expand the PI basis ${\bm A} = \{  A_{\bk} = A_{\bn\bl\bbm} : \bk = (\bn, \bl, \bbm) \in \SpecV  \}$ with auxiliary nodes to an extended basis ${\bm A}^{\rm ext}$ so that each $A_{\bk} \in {\bm A}^{\rm ext}$ can be decomposed into $A_{\bk} = A_{\bk'} A_{\bk''}$ where ${\rm len}(\bk'), {\rm len}(\bk'') < {\rm len}(\bk)$. \revii{Recall that these auxiliary basis functions are assigned the coefficient $a_\bk = 0$.}

Such an extension is always possible, for example by simply inserting the products
\[
    A_{k_1} A_{k_2}, 
    \quad 
    A_{k_1 k_2} A_{k_3}, 
    \quad 
    A_{k_1 k_2 k_3} A_{k_4},
    \dots  
\]
to the basis. In \revii{practice} we search all possible partitions and choose the one that minimizes the number of new basis functions to be inserted into the extended basis. An in-depth exploration of this heuristic, and of alternative constructions, will be pursued in a separate work.

For the following algorithm we use the notation 
\[
    \bk', \bk'' \Rightarrow \bk, c_\bk  
\]
to indicate that the contribution $a_\bk A_\bk$ to the site potential $V$ is to be computed via the operation $V \leftarrow V + a_\bk A_\bk$ with $A_\bk \leftarrow A_{\bk'} A_{\bk''}$. 

\revii{As in the previous section, we denote the one-particle basis adjoints by $W_k \equiv W_{nlm}$, but now also require many-body basis adjoints $W_{\bk} \equiv W_{\bn\bl\bbm}$.}

\bigskip \noindent {\bf Adjoint Gradient Algorithm: Graph Representation}
\begin{enumerate}
    \item {\bf Forward Pass:}
    \item For all $k = (n, l, m)$: 
    \item \qquad $A_k \leftarrow \sum_j \phi_k(\br_j)$ 
    \item \qquad $V \leftarrow V + a_k A_k$ 
    \item \qquad $\revii{W_k} \leftarrow a_k$
    \item For all operations $(\bk', \bk'') \Rightarrow (\bk, a_\bk)$ with increasing ${\rm len}(\bk)$:
    \item \qquad $A_{\bk} \leftarrow A_{\bk'} A_{\bk''}$
    \item \qquad $V \leftarrow V + a_\bk A_\bk$  
    \item \qquad $\revii{W_\bk} \leftarrow \revii{W_\bk} + a_\bk$ 
    \smallskip 
    \item {\bf Backward Pass:}
    \item For all operations $(\bk', \bk'') \Rightarrow (\bk, a_\bk)$ with decreasing ${\rm len}(\bk)$:
    \item \qquad $\revii{W_{\bk'}} \leftarrow \revii{W_{\bk'}} + a_\bk \revii{W_{\bk''}}$
    \item \qquad $\revii{W_{\bk''}} \leftarrow \revii{W_{\bk''}} + a_\bk \revii{W_{\bk'}}$ 
    \item For all $k = (n, l, m)$ and neighbour indices $j$:
    \item \qquad $\nabla_{\br_{\iota j}} V \leftarrow  \nabla_{\br_{\iota j}} V 
            + \revii{W_k} \nabla \phi_{k}(\br_{\iota j})$
\end{enumerate}

\subsubsection{Performance comparison}
A performance comparison of the standard and recursive evaluation is presented in Figure~\ref{fig:graphspeedup}.  
Only the evaluation of the correlations $A_{\bk}$ are included in the timing, but not the evaluation of the one-particle basis $A_k$ which is the same for both methods. The test is representative insofar as similar efforts have been made in optimising each evaluation code.

The conclusion of this test is that the speedup of the recursive representation is indeed significant due to the fact the very few artificial basis functions need to be added. An explanation of this observation requires a better understanding of algebraic (in-)dependence of the basis functions, e.g., possibly leveraging the observations made in \cite{Nigam2020-zj}. Depending on the computational cost of evaluating the one-particle basis the practical speed-up at low interaction orders may be less pronounced; see the next section for tests in a more realistic setting.

\begin{figure}
    \includegraphics[height=7cm]{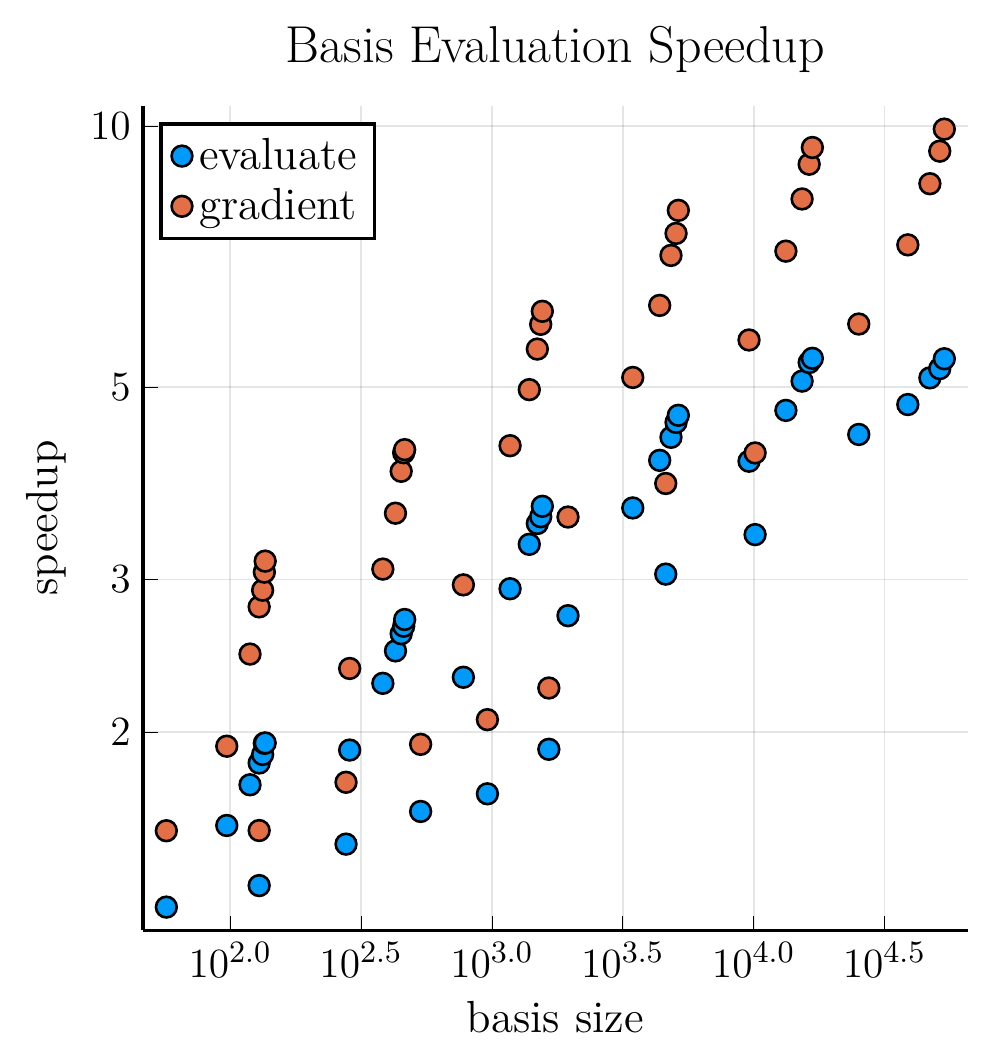}\qquad
    \includegraphics[height=7cm]{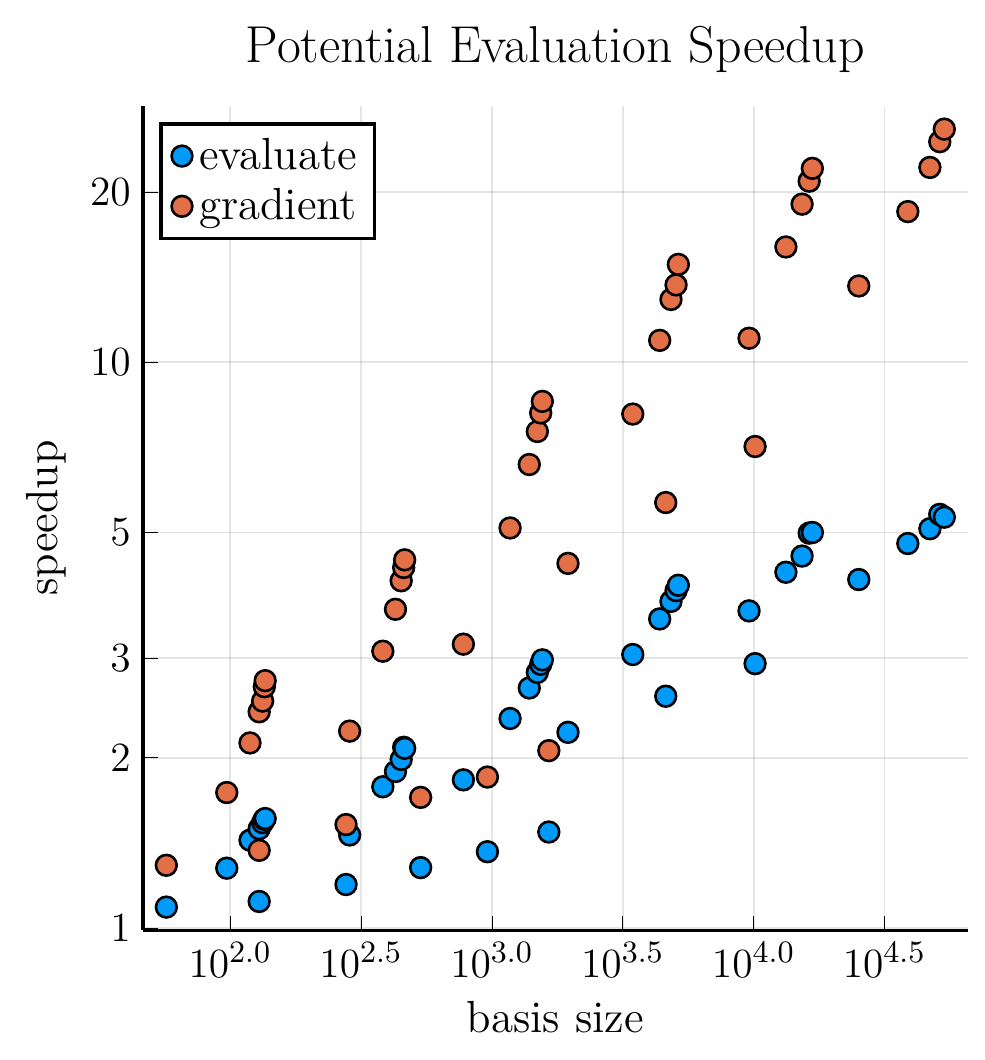}
    \caption{
    Speedup of recursive versus standard evaluator for the ACE basis (left) and potentials (right). For increasing interaction orders $N$ and total polynomial degrees (cf. \S~\ref{sec:implementation:basisconstruction}) an RPI basis is generated via \eqref{eq:B_basis} and then supplied with random coefficients to obtain a potential \eqref{eq:evalV}.}
    \label{fig:graphspeedup}
\end{figure}

\subsection{Convergence Tests}
\label{sec:convergence_tests}
The analysis of the previous sections establishes that the class of symmetric polynomials we propose provides a systemically improvable construction and can {\em in principle} represent any many-body PES to arbitrary accuracy. To provide more quantitative information supporting these results we performed convergence tests using two previously published training sets for, respectively, Si \cite{Bartok2018-fk} and W \cite{Szlachta2014-qr}. \revii{These convergence tests were done by fitting ACE bases with increasing polynomial degree, interaction order and cutoff to the previously described training databases. The radial basis was constructed according to \S~\ref{sec:orth_radial} with the following choices: The distance transform used was $\xi(r) = (1+r/\rnn)^{-2}$ where $\rnn$ was chosen to be the nearest neighbour distance for Si and W respectively. For both elements the recommended cutoff function $\fcut(r) = (\xi(\rcut) - \xi(r))^2$ was used. The outer cutoff $\rcut$ for Si was 5.5 \AA and for W 5.0 \AA, the inner bound for the orthogonality measure chosen as $r_0 = 0.7\rnn$.

To construct the many-body bases for the benchmarks, we chose polynomial degrees 8, 12, 14, 16, 18, 20 and 22 and interaction orders $N$ ranging from 2 to 7 for Si and from 2 to 5 for W. We used a {\em weighted total degree} of a basis function defined by ${\rm deg}(\bn, \bl, \bbm) = \sum_i n_i + 2 l_i$, which leads to higher degrees in the radial than in the angular components.
}

The Si training set contains a wide range of configurations, including surfaces, bulk structures, defects and high temperature liquid \cite{Bartok2018-fk}. It was assembled to create a GAP model which successfully describes a wide range of properties such as elastic constants, brittle fracture, surface formation energies and random structure search. Figure \ref{fig:si_conv} shows the convergence of our symmetric polynomial parameterisation with increasing interaction order and polynomial degree. For $N$=7 an RMSE set error of 2.33 meV/atom is achieved on the entire Si database. This fit had an evaluation time of 0.66 ms/atom per force evaluation. 

\begin{figure}
    \includegraphics[width=0.9\textwidth]{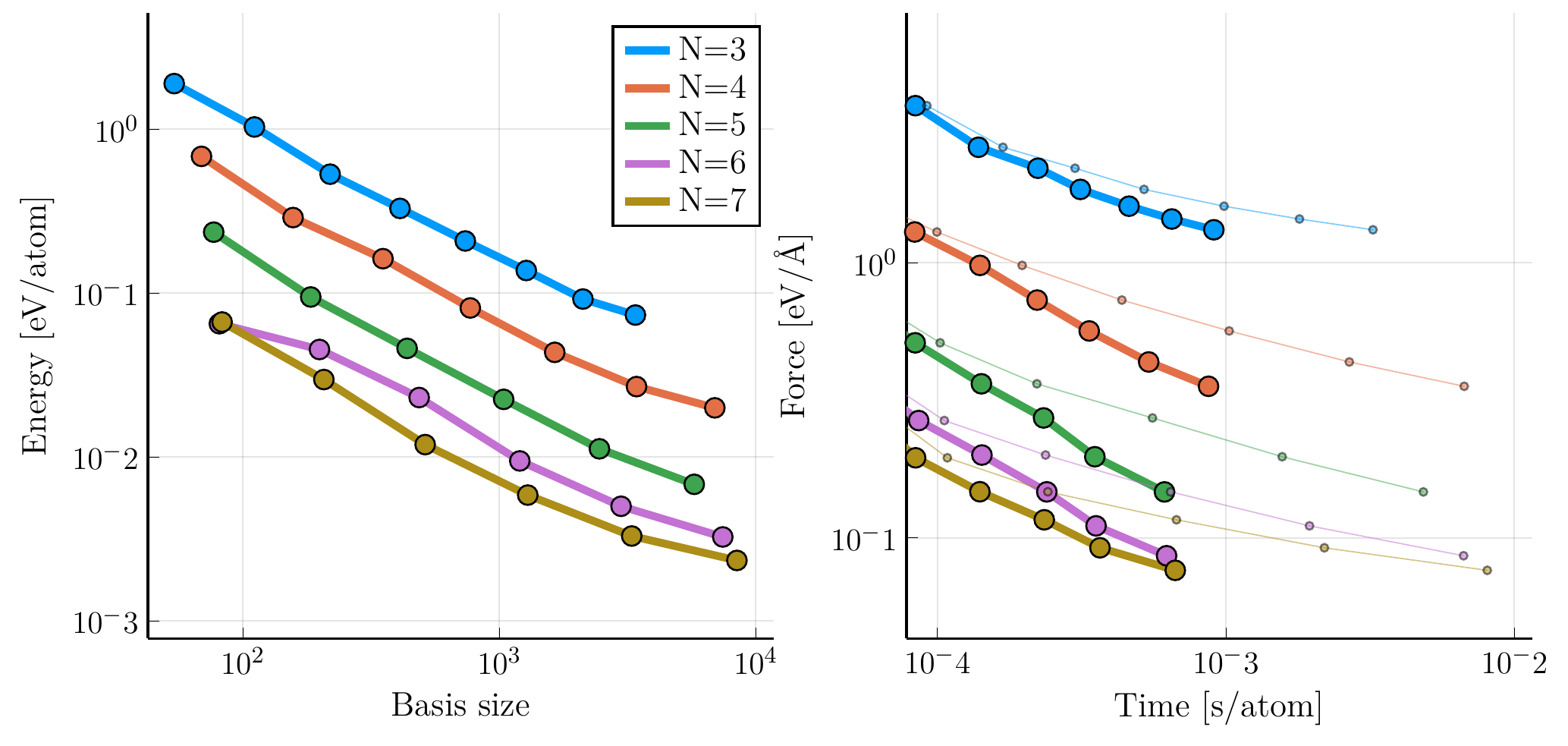}
    \caption{RMSE Convergence test on Si training set for increasing interaction order $N$ and polynomial degree, fixing the cutoff at 5.5 $\angstrom$. Left: convergence of energy RMSE error against increasing polynomial degree. Right: convergence of the force RMSE against the costs of a force evaluation using the recursive evaluator (thick lines) and the standard evaluator (thin lines). 
    }
    \label{fig:si_conv}
\end{figure}

The W database consists of a more narrow range of configurations including only bulk configurations and surfaces. These convergence test results are shown in Figure \ref{fig:w_conv}.
For $N$=3 an RMSE set error of 1.43 meV/atom was achieved with force evaluation timing at 1.12 ms/atom. Increasing the interaction order to $N$=5 resulted in comparable RMSE errors at 1.55 meV/atom but with better performance at 0.55 ms/atom per force evaluation. To explain why it possible that the RMSE {\em increases} with increasing interaction order we note that we are comparing at a fixed number of basis functions. Thus, a challenging compromise has to be made between increasing polynomial degree or interaction order.

\begin{figure}
    \includegraphics[width=0.9\textwidth]{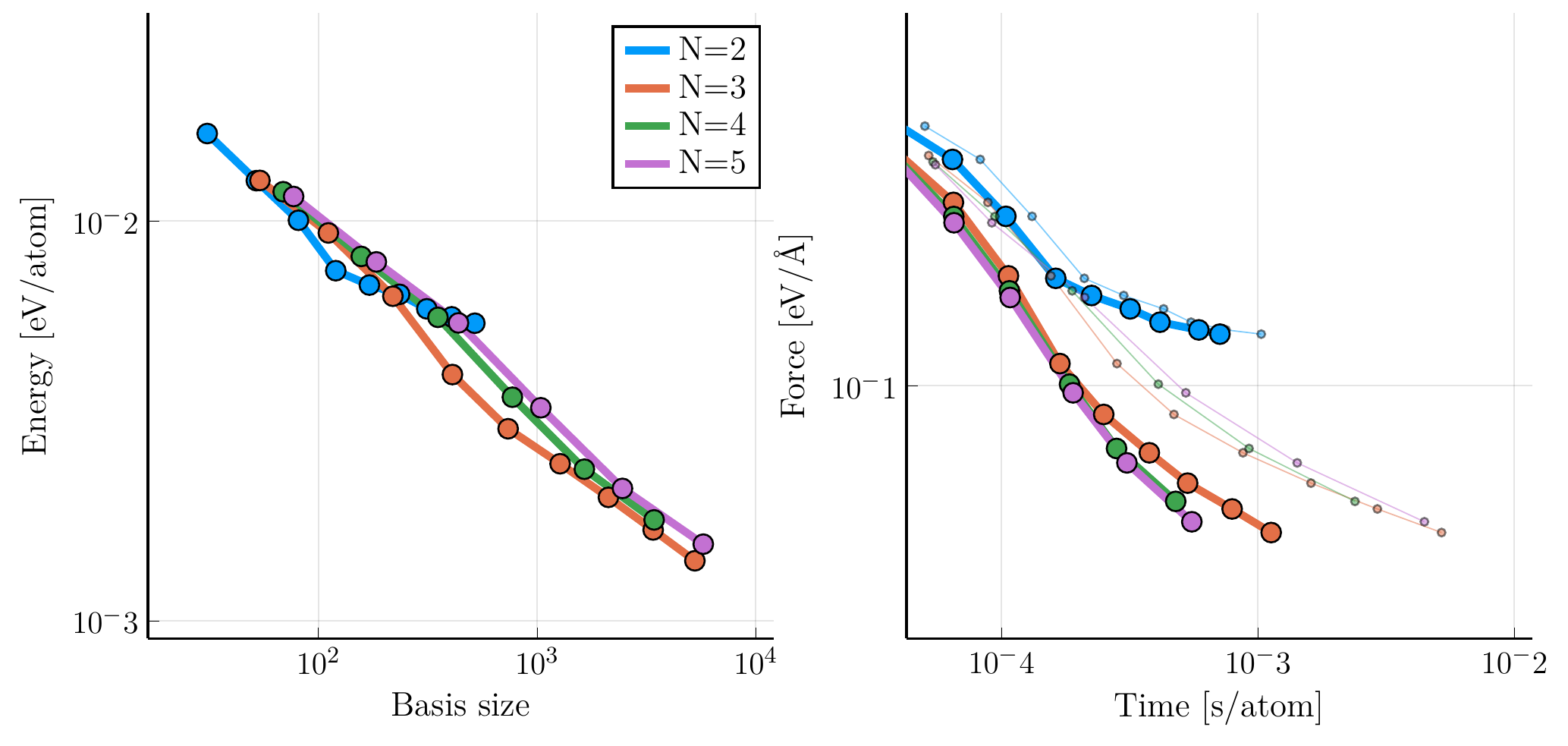}
    \caption{RMSE Convergence test on W training set for increasing interaction order $N$ and polynomial degree, fixing the cutoff at 5.0 $\angstrom$. Left: convergence of energy RMSE error against increasing polynomial degree. Right: convergence of the force RMSE against the costs of a force evaluation using the recursive evaluator (thick lines) and the standard evaluator (thin lines). 
    }
    \label{fig:w_conv}
\end{figure}

Comparing the convergence tests it can be concluded close to meV convergence is achieved at a lower interaction order for W compared to Si. One possible explanation for this is the lack of liquid configurations in the W database compared to the Si database. Moreover we observe that in this more realistic setting the recursive evaluator increases performance by roughly an order of magnitude compared to the standard evaluator. 

These timings are significantly slower than classical interatomic potentials but competitive with leading ML type potentials using similarly rich parameterisations. \revii{It should also be noted that the timings do not seem to scale with respect to the interaction order $N$ due to the projection of the ACE basis on the atomic neighbourhood density.}

\subsection{Convergence in cutoff radius}
\label{sec:cutoff_convergence_tests}
In addition to the polynomial degree and interaction order explored in the previous section, the ACE has a third approximation parameter, the cutoff radius $\rcut$. We investigate RMSE error convergence with respect to $\rcut$ for the W training set and interaction order of $N$=3. 
In Figure \ref{fig:w_rcut} we show the results for $\rcut = 4.0\angstrom, 5.0\angstrom, 6.0\angstrom$ and increasing total polynomial degree. 
Especially in the W test, we can see that relatively small bases have lower set RMSE error for small $\rcut$ distances, while large bases show improved RMSE set accuracy for larger $\rcut$ distances.
A possible explanation for this effect is that the increase in $\rcut$ increases the complexity of the fit (or, the size of the domain of definition) thus requiring a larger basis to fit accurately. 
It remains an open question how to optimally exploit the balance between cutoff, basis size and interaction order. 

\begin{figure}
    \includegraphics[height=6.7cm]{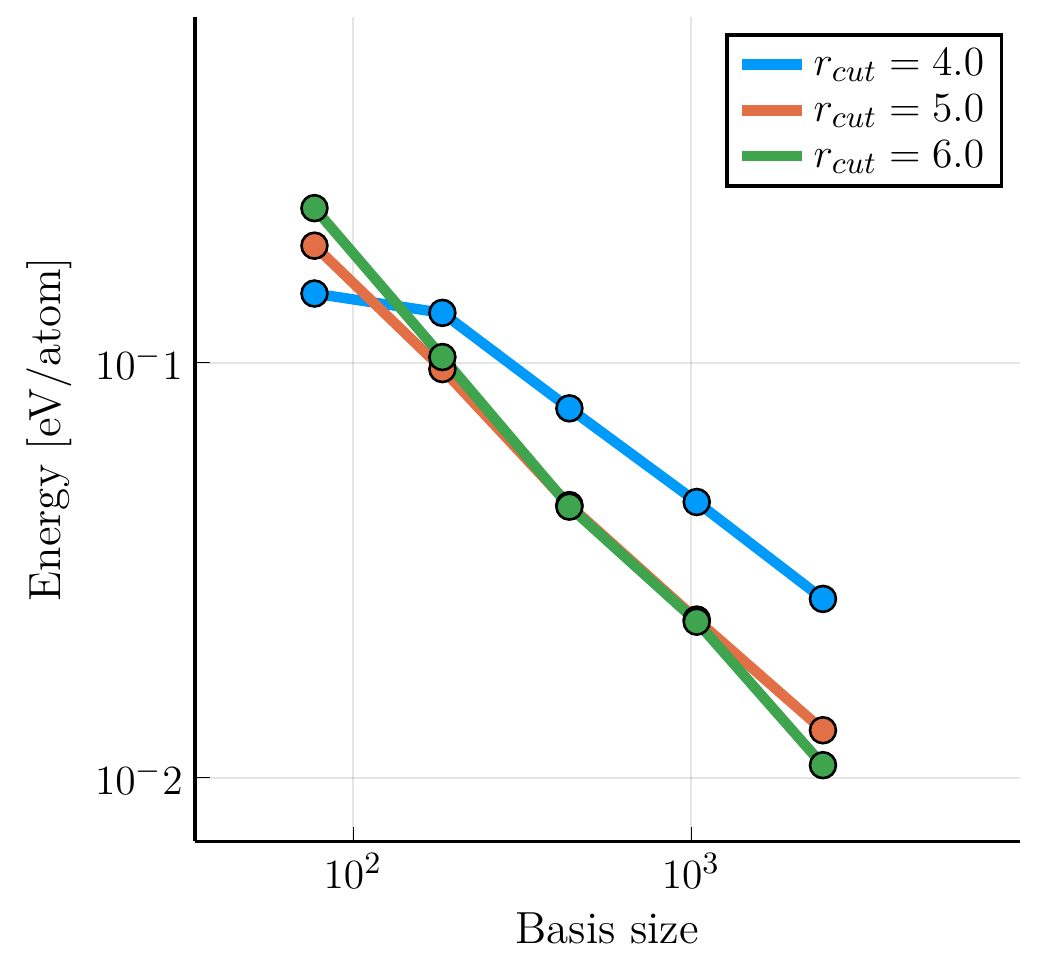}
    \includegraphics[height=6.7cm]{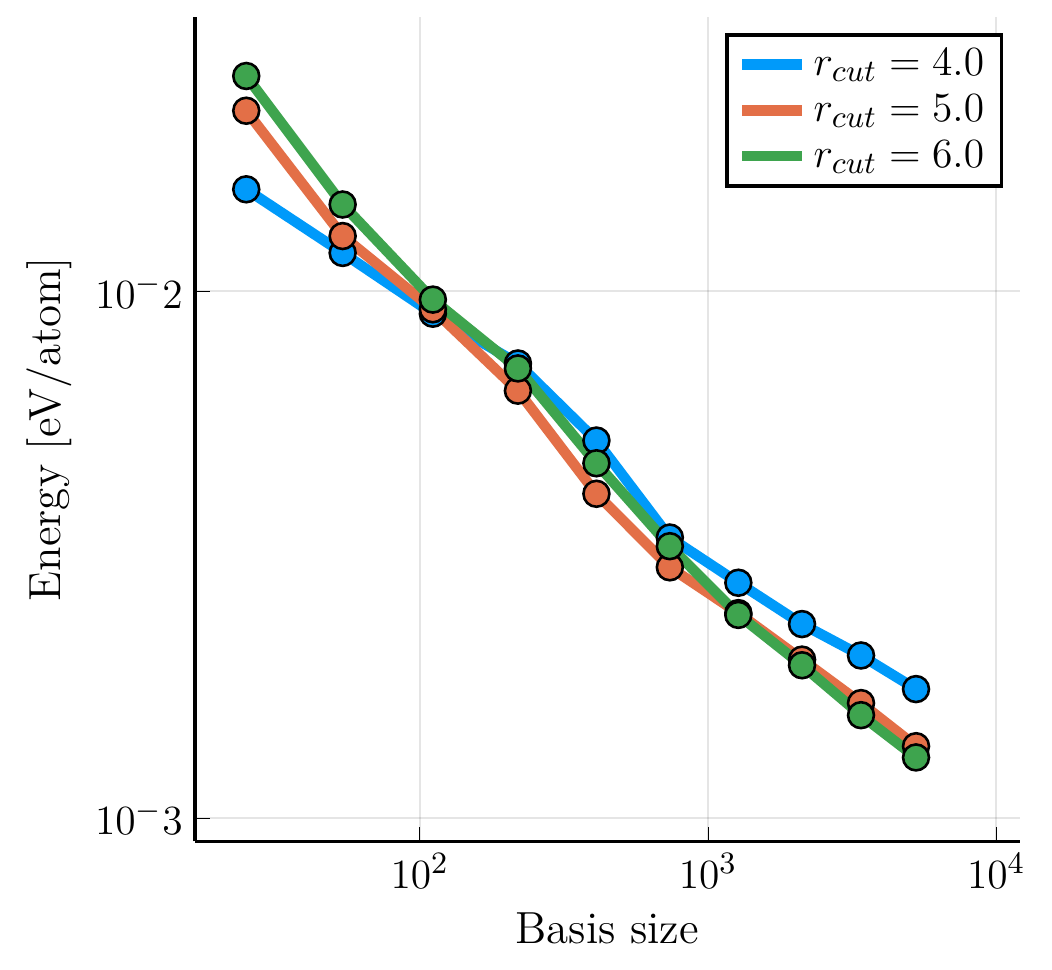}\qquad
    \caption{RMSE Convergence for varying $\rcut$ equal to 4.0 $\angstrom$, 5.0 $\angstrom$, 6.0 $\angstrom$ for W (left) and Si (right). A cross-over is observed for increasing basis size relative to $\rcut$.  }
    \label{fig:w_rcut}
\end{figure}

\section{Variations, Extensions and Remarks}
\label{sec:extensions}
We collect some natural variations and generalisations of the ideas presented in the foregoing sections to highlight how they can be applied in a more general fashion and to provide a road-map for future work.

\subsection{Combination, Composition and Self-consistency}
\label{sec:combcomp}
While the symmetric polynomial potentials introduced in the foregoing sections provide a general representation and approximation scheme for interatomic potentials, it will often be beneficial, and {\em in practice} maybe even necessary, to consider more general functional forms leading to nonlinear representations. We present three general constructions of this kind, which provide a modeller with a broad range of tools to significantly increase the ``design space''. 

\subsubsection{Combinations:} A general idea with considerable generality was proposed in~\cite{Drautz2019-er} is to admit site potentials consisting of {\em nonlinenar combinations}
\[
    V(R) = \mathcal{F}\big( \rho^{(1)}, \dots, \rho^{(P)} \big), 
\]
of atom-centered features  (called ``densities'' in~\cite{Drautz2019-er}) $\rho^{(p)} = \rho^{(p)}(R), p = 1, \dots, P$. The features $\rho^{(p)}$ will normally be permutation and isometry invariant and can therefore be represented in terms of the ACE basis. A prototypical example inspired by the Finnis-Sinclair and embedded atom models is 
\[
    V(R) = \rho^{(1)} + \sqrt{\rho^{(2)}}.
\]
Many generalisations are of course possible, including power laws, 
\[
    V(R) = \sum_{p = 1}^P \big| \rho^{(p)} \big|^{\alpha_p}, 
\]
or general polynomial expansions of $\mathcal{F}$ with parameters either determined {\it a priori} by insight into the chemistry of bonding, or by fitting to a training set. 

\subsubsection{Composition:} 
\label{sec:composition}
Inspired by classical interatomic potentials such as the Tersoff potential~\cite{Tersoff1986-rz} and the EDIP potential~\cite{Bazant1997-qb}, as well as nonlinear approximation schemes such as artificial neural networks, it is equally interesting to explore the {\em composition} of atom-centered features. For example, we can envision a site potential of the form 
\[
    E_i = V(\{\br_{ij}\}_j) = \mathcal{G}(s_0, \{(s_j, \br_{ij})\}_j ),
\]
where each $s_j$ is an atom-$j$-centered symmetric feature (possibly vectorial), such as a neighbour-count, a more general atomic neighbourhood statistic, a charge or charge density moments, and so forth. The function $\mathcal{G}$ can again be modelled as a symmetric polynomial but now in the expanded $(s, \br)$ space. Permutation symmetry is introduced by exactly the same mechanism as before, while rotation symmetry will now only be applied to the $\br$-component. Note though that the density must now include the centered atom described by $(s_0, {\bm 0})$, which requires only minor modifications to the framework outlined in the foregoing sections.

The features $s_j$ could be given analytically, or could again by presented as a symmetric polynomial and fitted to a training set, or several training sets to achieve transferrability. Applying the composition concept again to the $s_j$ feature, we may choose to write $s_i = \mathcal{S}(\{ (t_j, \br_{ij})\}_j)$.

An alternative way to write composition is to have $E_i$ depend only on a single symmetric feature centered at the site $i$, i.e., 
\[
    E_i = V(t_i; \{\br_j\}_j).
\]
This case can also be incorporated into the general framework by interpreting the centre-site as an atom of an auxiliary species, different from the neighbours $\{\br_j\}$ and analogous ideas to \S~\ref{sec:app:multi} can now be applied.

\subsubsection{Self-consistency}
A particularly interesting possibility opened up by the compositional structure is to have {\em self-consistent features}, i.e., the feature vector ${\bm s} := (s_i)_i$ could be defined through a fixed point map 
\[
    s_i = \mathcal{S}\big(s_i, \{ (s_j, \br_j)\}_{j}\big),
\]
which seems particularly natural for modelling a charge density. The mapping $\mathcal{S}$ can again be written as a symmetric polynomial and fitted to a training set or multiple training sets for transferrability.

\subsection{Evaluation of the orthogonal basis}
\label{sec:proj:orth}
We have seen that the computationally efficient basis ${\bm B}_N$
has the downside of introducing unphysical self-interactions that destroy the ``pure'' interaction order of the ${\bm \calB}_N$ basis. This means that for any practical parameterization, modifications of expansion coefficients of order $N$ need to be accompanied by corresponding changes of expansion coefficients $N' < N$ to balance the unphysical self-interactions.
This suggests that the ${\bm B}_N$ basis is ill-conditioned with respect to the canonical inner product, which we also confirm numerically in Table~\ref{tbl:ill_conditioning_ace}, leading to difficulties for the regularisation of the least-squares problem~\eqref{eq:LSQ_system}.

\begin{table}
    \begin{center} \small
    \begin{tabular}{cclllll}
        \toprule
        \multicolumn{2}{r}{total degree} &
             $4$ &       $6$ &       $8$ &      $10$ &      $12$  \\
        \midrule[0.075em]
         $N = 2$ &  all & $1.4 \times 10^{3}$ & $4.5 \times 10^{3}$ & $9.8 \times 10^{3}$ & $2.2 \times 10^{4}$ & $4.4 \times 10^{4}$ \\
                 & pure & $1.1 \times 10^{2}$ & $4.7 \times 10^{2}$ & $9.9 \times 10^{2}$ & $4.2 \times 10^{3}$ & $1.3 \times 10^{4}$ \\
         \midrule[0.02em]
         $N = 3$ &  all & $1.4 \times 10^{5}$ & $6.3 \times 10^{5}$ & $2.2 \times 10^{6}$ & $7.0 \times 10^{6}$ & $3.5 \times 10^{7}$ \\
                 & pure & $9.2 \times 10^{2}$ & $6.1 \times 10^{3}$ & $2.0 \times 10^{4}$ & $9.1 \times 10^{4}$ & $6.7 \times 10^{5}$ \\
         \midrule[0.02em]
         $N = 4$ &  all & $2.3 \times 10^{7}$ & $1.3 \times 10^{8}$ & $4.8 \times 10^{8}$ & $2.4 \times 10^{9}$ & $6.0 \times 10^{10}$ \\
                 & pure & $5.1 \times 10^{3}$ & $1.1 \times 10^{5}$ & $9.9 \times 10^{5}$ & $5.6 \times 10^{6}$ & $5.0 \times 10^{7}$ \\
         \midrule[0.02em]
         $N = 5$ &  all & $9.4 \times 10^{9}$ & $5.5 \times 10^{10}$ & $2.2 \times 10^{11}$ & $3.1 \times 10^{12}$ & $1.2 \times 10^{14}$ \\
                 & pure & $1.8 \times 10^{4}$ & $7.2 \times 10^{5}$ & $1.1 \times 10^{7}$ & $1.4 \times 10^{8}$ & $1.3 \times 10^{9}$ \\
         \midrule[0.02em]
         $N = 6$ &  all & $7.0 \times 10^{12}$ & $5.2 \times 10^{13}$ & $3.9 \times 10^{14}$ & $5.0 \times 10^{15}$ & $1.1 \times 10^{17}$ \\
                 & pure & $6.9 \times 10^{4}$ & $3.7 \times 10^{6}$ & $1.2 \times 10^{8}$ & $2.1 \times 10^{9}$ & $4.3 \times 10^{10}$ \\
         \bottomrule
    \end{tabular}
    \end{center}
    \vspace{2mm}
    \caption{Ill-conditioning of the ${\bm B}_N$ basis (symmetry-adapted correlations) due to self-interactions: ``$N$'' denotes the interaction order, ``all'' indicates that all orders are combined in the basis, while ``pure'' indicates that only basis functions with exact order $N$ are taken. For each basis the Gramian $G$ with respect to the canonical inner product is computed approximately by random samples, and the resulting condition numbers $\kappa(G) = \|G\| \| G^{-1}\|$ are displayed in the table. For the ``pure'' basis $\calB_N$ we have $\kappa(G) = 1$. }
    \label{tbl:ill_conditioning_ace}
\end{table}

For numerical stability of regression \S~\ref{sec:lsq}, and in particular for the purpose of regularisation and sparsification, it is therefore interesting to explore the possibility of obtaining an orthogonal basis.
Of course, the canonical symmetrised basis ${\bm \calB}_N$ retains the pure interaction order as well as orthogonality, but is seemingly too expensive to evaluate due to the explicit sum over permutations and clusters.

It turns out that there is a recursion formula for ${\bm \calB}_N$ basis functions which is not as efficient as the three point
recursion for univariate orthogonal polynomials, but possibly sufficient for our
purposes. This is due to the fact that the basis evaluation only occurs during
the training phase, while the final fitted potential will be represented
differently; cf. \S~\ref{app:implementation:V}. We present here a brief summary of the result, but leave the details for \cite{Etter2020}.

To simplify the notation we identify multi-indices $k = (n, l, m)$, $k_i = (n_i, l_i, m_i)$ and analogously for tuples $\bk = (\bn, \bl, \bbm)$. We focus only on the PI basis,
\begin{equation} \label{eq:defn_Asym}
\begin{split}
    \Asym_{\bn\bl\bbm} &= \Asym_\bk
         = \sum_{j_1 \neq \dots \neq j_N}
         \phi_{k_1}(\br_{j_1}) \cdots \phi_{k_N}(\br_{j_N}).
\end{split}
\end{equation}
from which the RPI basis functions $\calB_{\bn\bl i}$ can of course be constructed as described in \S~\ref{sec:symm}.

The result proven in \cite{Etter2020} is that there exist coefficients $\calP_{k_\beta k_{N+1}}^\kappa$ such that
\begin{align} \label{eq:Asym_recursion}
    &\Asym_{(\bk,k_{N+1})}  =
    \Asym_{\bk} A_{k_{N+1}}
    -
    \sum_{\beta = 1}^N
    \sum_{\kappa \in \N} \calP_{k_\beta k_{N+1}}^\kappa
    \Asym_{{\bk}[\beta, \kappa]},
    \\
    \notag
    &\bk[\beta, \kappa] := (k_1', \dots k_N')
    \quad \text{where} \quad
    k_\alpha' = \begin{cases}
        k_\alpha, &  \alpha \neq \beta  \\
        \kappa, &  \alpha = \beta.
    \end{cases}
\end{align}
The possibility of computing the $A_{\bn\bl\bbm}$ basis from the $\Asym_{\bn\bl\bbm}$ basis follows of course from the fact that both are complete. The key point of \eqref{eq:Asym_recursion} is that it provides a relatively efficient recursion.

We briefly hint at how \eqref{eq:Asym_recursion} is proven by considering the $N=2$ case:
\begin{align*}
    \notag
    A_{k_1 k_2}
    &=
    \sum_{j_1, j_2} \phi_{k_1}(\br_{j_1}) \phi_{k_2}(\br_{j_2})
    \\
    \notag
    &=
    \sum_{j_1 \neq j_2} \phi_{k_1}(\br_{j_1}) \phi_{k_2}(\br_{j_2})
    +
    \sum_{j} \phi_{k_1}(\br_j) \phi_{k_2}(\br_j)
    \\
    &=
    \Asym_{k_1 k_2} + \sum_j\phi_{k_1}(\br_j) \phi_{k_2}(\br_j),
\end{align*}
After expanding the product $\phi_{k_1}(\br) \phi_{k_2}(\br) =
\sum_\kappa \calP_{k_1 k_2}^\kappa \phi_\kappa(\br)$ we obtain \eqref{eq:Asym_recursion}. The recursion step for the general case is similar.

\begin{remark}
    \begin{enumerate}
    \item When the radial basis, including the cutoff, is purely polynomial then only finitely many coefficient $\calP_{k_\beta k_{N+1}}^\kappa$ are non-zero. In general, this expansion must be truncated which would then lead not to an orthogonal basis but a well-conditioned basis.

    \item The computational cost of \eqref{eq:Asym_recursion} is determined by the sparsity of the expansion coefficients $\calP^\kappa_{k_1 k_2}$ and this depends on the choice of polynomial basis; see \S~\ref{sec:radial}. From the perspective of \eqref{eq:Asym_recursion} the Chebyshev polynomials appear to be ideal since they have only two terms in the product formula.

    \item The basis functions $\calB_{\bn\bl i}$ are orthogonal within a fixed interaction order $N$, but not across different $N$.
    It is not clear yet how to overcome this limitation, however it may also be natural and sufficient to construct robust regression schemes; cf.~\S~\ref{sec:regularisation}.
    \end{enumerate}
\end{remark}


\subsection{Ill-posedness and Regularisation}
\label{sec:regularisation}
Except for the one-particle case, the body-order expansion representation \eqref{eq:cluster expansion} of a PES is not unique. For example, if we write a 3-body PES (interaction order 2) as
\[
    E(\{\br_j\}) = \sum_i \sum_{j_1 < j_2} V_2(\br_{j_1 i}, \br_{j_2 i}),
\]
then we can immediately rewrite this as
\begin{align*}
    E(\{\br_j\}) &=
    \!\!\sum_{j_1 < j_2 < j_3} \!\!\Big\{
       V_2(\br_{j_2 j_1}, \br_{j_3 j_1}) +
       V_2(\br_{j_1 j_2}, \br_{j_3 j_2}) +
       V_2(\br_{j_1 j_3}, \br_{j_2 j_3}) \Big\} \\
    &=: \!\!\sum_{j_1 < j_2 < j_3} \!\! W_3(\br_{j_1}, \br_{j_2}, \br_{j_3}),
\end{align*}
Here, $W_3$ is fully permutation invariant, while $V_3$ need not be: indeed we can define an alternative 3-body potential
\[
    \tilde{V}_2(\br_1, \br_2) = {\textstyle \frac{1}{3}}  W_3({\bm 0}, \br_1, \br_2),
\]
which gives rise to the same $W_3$ and hence the same PES $E$.

However, in general, we can define
\[
    W_N(\br_1, \dots, \br_N) :=
    \sum_{i = 1}^{N} V_{N-1}\big( (\br_j)_{j \neq i} \big).
\]
Let $R = \{ \br_j \}_{j = 1}^N$ be an arbitrary configuration consisting of $N$ particles, then
\[
    E(R) = \sum_{n = 1}^{N-1} \sum_{j_1 < \dots < j_n} W_n\big( (\br_{j_i})_i \big)
    + W_N(R).
\]
Thus, arguing inductively, we can determine all $W_N$ uniquely by evaluating $E$ on suitable configurations, that is, the $W_N$ are unique \cite{Drautz04}.

In \revii{practice}, however, we usually ``train'' ${\bf V} := (V_N)_{N = 0}^{\calN}$ on a data set that need not contain {\em any} small cluster configurations of this kind but only total energies and forces from condensed states in different phases and at different temperatures and pressures,
and possibly liquid phases. In this case, while ${\bf V}$  (or, rather, the corresponding ${\bf W} = (W_N)$) may be uniquely determined in principle, it is not obvious whether it is determined by the training set.

That is, suppose that the condensed state training set is sufficiently rich can we then uniquely determine the object of interest ${\bf V}$? 
We leave a detailed discussion of this underlying inverse problem, which is of fundamental interest to ``learning'' PESs, for future work. We merely note that the well-known uniqueness result for the pair interaction given the radial distribution function \cite{Henderson1974-ai} is an example of such an inversion, albeit an ill-conditioned one.

A second, possibly more severe source of ill-posedness of the least-squares system \eqref{eq:LSQ_system} is that practical training sets do not cover the entire space of $N$-body clusters even approximately. This is discussed in much more detail in~\cite{2019-regapips1} and a range of regularisation mechanisms are proposed in response. It is convenient and natural to treat each $V_N$ as an independent function and therefore choose norms of the form
\[
    \| {\bf V}\|^2 = \sum_{N = 1}^{\calN} \| V_N \|_{X_N}^2,
\]
where $X_N$ denotes a suitable function space. Natural choices are Sobolev norms and Sobolev semi-norms, which enforce a desired degree of smoothness.

In that vein,  \cite{2019-regapips1} proposed a generalised Tychonov
regularisation with a discretised $H^2$-type seminorm,
\[
    \| V_N \|_{\Delta}^2 := c_N \int_{\Omega_{N,\ro, \rcut}} |\Delta V_N|^2
    \approx \frac{1}{\# \mathcal{X}_N} \sum_{R \in \mathcal{X}_N}
        |\Delta V_N(R)|^2,
\]
where $\mathcal{X}_N$ is a quasi-Monte Carlo (low-discrepancy, Sobol) sequence.
This was used to construct highly transferrable (regular) symmetric polynomial potentials based on the aPIP formalism, despite significant gaps in the training set.
We refer to \cite{2019-regapips1} for further details.
While natural, this is by no means the only way to regularise and adding any {\it a priori} physical information through regularisation will in general further improve the fits.

\subsection{Cylindrical symmetry}
\label{app:othersym}
\def\bo{{\bm 0}}
\def\be{{\bm e}}
Throughout this paper we considered functions with full ${\rm O}(3)$ symmetry, but in many scenarios other (possibly simpler) symmetries arise which can be treated with the same overarching ideas. As a canonical example we consider a bond between two atoms at positions $\pm\frac12 \br$, surrounded by a {\em bond environment} $\{\br_j\}$. We wish to model a pair potential whose ``strength'' is determined by that environment, i.e., a function $V^{\rm env}(\br; \{\br_j\})$.

We begin by representing the environment in cylindrical coordinates aligned with the bond,
\[
    \br_j = r_j \cos \theta_j \be_x + r_j \sin\theta_j \be_y + z_j \be_z,
\]
where $(\be_x, \be_y, \be_z)$ are an orthonormal frame with $\be_z \propto \br$. The choice of $\be_x, \be_y$ is non-unique, but we will see that any choice that gives an orthonormal frame suffices. Finally, let $r := |\br|$ denote the length of the bond. In this representation, the function $V^{\rm env}$ is invariant under permutations of the environment, ${\rm O}(2)$ symmetries of the environment about the $\be_z$ axis as well as planar reflection along the $\be_z$ axis. The remaining rotations are already accounted for in the choice of coordinate system.

To construct a basis set inheriting these symmetries we begin, similarly as in the main part of the paper, by specifying four univariate basis sets,
\[
    \calP^0, \quad \calP^r, \quad \calP^z, \quad \calP^\theta,
\]
where $\calP^0, \calP^r, \calP^z$ are variants of the transformed polynomial basis introduced in \S~\ref{sec:radial} (but for $\calP^z$ the domain must be an interval $[-\rcut^z, \rcut^z]$ rather than $[0, \rcut^z]$) and the angular basis are the trigonometric polynomials,
\[
    \calP^\theta = \big\{
    P_k^\theta(\theta) := e^{i k \cdot \theta} \, |\, k \in \Z \big\}.
\]
We assume in the following that
\[
    P_k^z(-z) = (-1)^k P_k^z(z),
\]
which is natural if the basis is constructed through the procedure described in \S~\ref{sec:radial} and the orthogonality measure $\rho$ are symmetric about the origin. For example the $J^{\alpha,\alpha}$ Jacobi polynomials satisfy this condition.

For $\bk = (k^r, k^\theta, k^z) \in \mathbb{K}_1 := \N \times \Z \times \N$ we define the one-particle tensor product basis
\[
    \phi_{\bk}(\br_j) := P_{k^r}^r(r_j) P_{k^\theta}^\theta(\theta_j) P_{k^z}^z(z_j).
\]
For ${\bm K} := (\bk_1, \dots, \bk_N)$, $\bk_\alpha = (k_\alpha^r, k_\alpha^\theta, k_\alpha^z)$, the canonical pair-bond tensor product basis, involving $N$ neighbours, can then be written as
\[
    \phi_{n,{\bm K}}(r, \br_1, \dots, \br_{N})
    :=
    P_n(r)
    \prod_{\alpha = 1}^N
    \phi_{\bk_\alpha}(\br_\alpha).
\]

We now wish to symmetrise it with respect to the symmetries mentioned above: permutation of the environment atoms $\{\br_j\}_{j = 1}^N$, rotation about the $\be_z$ axis and reflection through the $(\be_x, \be_y)$ plane. In abstract terms we simply follow the procedure of integrating the basis with respect to the Haar measure, described in \S~\ref{sec:symm}, but now the symmetry groups are much simpler. A fairly straightforward calculation yields the following symmetric orthonormal basis:
\[
    \calB_{n, {\bm K}}(r, \br_1, \dots, \br_{N})
    :=
    \frac{1}{N!} \sum_{\sigma \in S_N}
    {\rm Re}\big[\phi_{n,{\bm K}}(r, \br_{\sigma 1}, \dots, \br_{\sigma N})\big],
    \qquad
    \text{for }
    n \in \N, {\bm K} \in \mathbb{K}_N^{\rm sym},
\]
where
\begin{align*}
    \mathbb{K}_N^{\rm sym} :=
    \Big\{  {\bm K} \in \mathbb{K}_1^N \,\big|\,
            \text{ordered,} \quad
            {\textstyle \sum_{\alpha = 1}^N} k_\alpha^\theta = 0, \quad
            {\textstyle \sum_{\alpha = 1}^N} k_\alpha^z \text{ is even}, \\
            \text{and the first non-zero $k_\alpha^\theta > 0$}
            \Big\}.
\end{align*}
The condition $\sum k_\alpha^\theta = 0$ enforces rotation symmetry about the $z$-axis, $\sum k_\alpha^z$ being even ensures reflection symmetry along the $z$-axis. Taking the real part ${\rm Re}[\phi_{n, \bf K}]$ imposes symmetry with respect to the reflection $(x, y, z) \mapsto (x, -y, z)$ and the final condition in the definition of  $\mathbb{K}_N^{\rm sym}$ merely avoids using the two identical basis functions characterised by $\pm {\bm k}^\theta$.

Next, we can adapt the density trick of \S~\ref{sec:ACEBasis} to obtain the alternative basis functions, with improved computational cost,
\begin{align*}
    B_{n, {\bm K}}\big(r, \{\br_j\}\big)
    &:= P_n(r) {\rm Re}\big[ A_{\bm K}^{\rm cyl}(\{\br_j\} )\big], \\
    A_{\bm K}^{\rm cyl}\big( \{\br_j\} \big)
    &:= \prod_{\alpha = 1}^N A^{\rm cyl}_{\bk_\alpha}\big(\{\br_j\}\big), \\
    A_{\bk_\alpha}^{\rm cyl}\big(\{\br_j\}\big)
    &:= \sum_{j} \phi_{\bk_\alpha}(\br_j),
\end{align*}
which we may call the {\em bond cluster expansion}.

Analogous comments apply regarding the significant gain in computational cost, the loss of orthogonality, conditioning of the basis, parameter estimation, well-posedness of the inverse problem, regularisation, and so forth.

\section{Conclusions and Outlook}
\label{sec:conclusions}
There is a rich literature emerging on the construction of interatomic potentials, and other molecular models, using machine learning methodology: general function approximation schemes combined with parameter estimation on large training sets obtained from electronic structure models.
The primary purpose of the present paper was to provide an entry-point for mathematicians, in particular numerical analysts, to this field. To that end, we reviewed, in full detail, the construction of an isometry and permutation invariant polynomial basis~\cite{Drautz2019-er}, based on the spherical harmonics expansions. We consider this to be one of the fundamental and most promising approaches to ``learning'' symmetric functions such as interatomic potentials. There are moreover close connections to other succesful schemes (cf. Appendix~\ref{app:otherbases}).

We demonstrated that, with some care, this construction leads to a {\em complete basis} that is highly efficient to evaluate. This creates the foundation for a numerical analysis framework to study the approximation properties as well as the associated parameter estimation schemes.
Throughout the paper we pointed out a variety of interesting challenges to further improve upon the basis construction as well as its theoretical foundation.

\appendix

\section{Spherical Harmonics}
\label{sec:app:SH}
In terms of the polar coordinate system
\[
    \br = r (\cos\varphi \sin\theta, \sin\varphi \sin\theta, \cos\theta)
\]
the complex spherical harmonics $Y_l^m : \bbS^2 \to \bbC$, $l \in \N, m = -l, \dots, l$ are given by
\[
    Y_l^m(\hat\br) := P_l^m(\cos\theta) e^{i m \varphi}.
\]
The normalisation factor, which is irrelevant for our purposes and is hence ignored, varies across different application domains. The associated Legendre polynomials $P_l^m$ are defined by (there are many equivalent definitions)
\[
    P_l^m(x) := \frac{(-1)^m}{2^l l!} (1-x^2)^{m/2} \frac{d^{l+m}}{dx^{l+m}} \big( x^2 - 1 \big)^l,
\]
however, we will never use this expression in \revii{practice}, but instead follow a numerically stable implementation described in \cite{Limpanuparb2014-tv}, which we slightly adapt to our purposes in \S~\ref{sec:Ylm:numstab}.

The two key properties of spherical harmonics that we will employ are (1) that they form an orthogonal basis of $L^2(\bbS^2)$; and (2) rotations $Y_l^m(Q \br)$ can be conveniently expressed and manipulated. To explain these manipulations we first need to define the Wigner D-matrices and Clebsch-Gordan coefficients. These are easiest to define {\em implicitly} through their action on spherical harmonics.

First, we define integration over ${\rm SO}(3)$,
\[
    \int_{{\rm SO}(3)} f(Q) \,dQ
\]
to be integration with respect to the unique normalised Haar measure on the group of rotations. This can be conveniently expressed analytically in terms of Euler angles, but we will never require this representation.

\begin{lemma}[Wigner-D matrices]
    Let $Q \in {\rm SO}(3)$, then there exist $D^l_{\mu m}(Q) \in \bbC$,
    $\mu, m = -l, \dots, l$, measurable with respect to the normalised Haar measure on ${\rm SO}(3)$,
    such that
    \begin{equation}
        \label{eq:app:rot_Ylm}
        Y_l^m(Q\br) = \sum_{\mu = -l}^l D_{\mu m}^l(Q) Y_l^\mu(\br).
    \end{equation}
\end{lemma}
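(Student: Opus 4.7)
The plan is to leverage the fact that the spherical harmonics of fixed degree $l$ are restrictions to $\bbS^2$ of homogeneous harmonic polynomials of degree $l$, and that this space of polynomials is invariant under $SO(3)$ (indeed under $O(3)$). I will then read off the coefficients $D^l_{\mu m}(Q)$ via an $L^2$ inner product, which automatically yields measurability (in fact, smoothness) in $Q$.

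Concretely, let $\mathcal{H}_l$ denote the space of homogeneous harmonic polynomials on $\R^3$ of degree $l$. First, I would recall the standard fact that $\{Y_l^m : m = -l,\ldots,l\}$ is an orthonormal basis of $\mathcal{H}_l|_{\bbS^2}$, so every function in $\mathcal{H}_l|_{\bbS^2}$ has a unique expansion in the $Y_l^\mu$. Next, for $p \in \mathcal{H}_l$ and $Q \in SO(3)$, the pullback $p \circ Q$ is clearly homogeneous of degree $l$, and it is harmonic because the Laplacian commutes with isometries (a one-line calculation using $Q^T Q = I$). Hence $\mathcal{H}_l \circ Q \subset \mathcal{H}_l$, which I apply to the spherical-harmonic extensions of $Y_l^m$, restrict back to $\bbS^2$, and conclude that $Y_l^m \circ Q \in \operatorname{span}\{Y_l^\mu\}_{\mu=-l}^l$. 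The expansion coefficients are therefore defined by
\[
   D^l_{\mu m}(Q) := \int_{\bbS^2} Y_l^m(Q\hat\br)\,\overline{Y_l^\mu(\hat\br)}\,dS(\hat\br),
\]
and uniqueness of the expansion \eqref{eq:app:rot_Ylm} is immediate from orthonormality.

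For measurability, the map $Q \mapsto Y_l^m(Q \cdot)$ is continuous from $SO(3)$ into $C(\bbS^2)$, and in fact the $Y_l^m$ are polynomials, so $Q \mapsto D^l_{\mu m}(Q)$ is continuous (indeed real-analytic) in $Q$. Continuity with respect to the manifold topology of $SO(3)$ implies Borel-measurability with respect to the Haar measure, as required. If desired, one can make this more explicit by parametrising $Q$ through Euler angles and observing that $D^l_{\mu m}$ is a trigonometric polynomial in those angles, but this level of detail is not required by the statement.

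The main ``obstacle'' is not really an obstacle but rather a choice of viewpoint: one must decide whether to invoke the representation-theoretic fact that $\mathcal{H}_l$ carries an irreducible $SO(3)$-representation, or to take the elementary route via invariance of harmonicity under isometries. I would take the elementary route since it needs only that $\Delta$ commutes with orthogonal transformations and that $Y_l^m$ extends to an element of $\mathcal{H}_l$; both are standard and require no further machinery. The remaining verification of measurability is essentially automatic from the inner-product formula.
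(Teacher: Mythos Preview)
Your argument is correct. Note, however, that the paper does not actually prove this lemma: it is stated in the appendix as a standard fact about spherical harmonics, with references to the literature (e.g.\ \cite{Byerly,Yutsis1962-yc}) rather than a self-contained argument. Your approach---using invariance of the space $\mathcal{H}_l$ of homogeneous harmonic polynomials under orthogonal transformations, then reading off the coefficients via the $L^2(\bbS^2)$ inner product---is the standard elementary route and supplies exactly the kind of proof the paper omits. The inner-product formula you give for $D^l_{\mu m}(Q)$ also makes the measurability (indeed smoothness) claim transparent, which is all the statement requires.
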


\begin{lemma}[Clebsch-Gordan coefficients] There exist
   $C_{l_1m_1l_2m_2}^{\lambda \mu} \in \R$,
   $-l_i \leq m_i \leq l_i;
     |l_1-l_2| \leq \lambda \leq l_1+l_2;
     -\lambda \leq \mu \leq \lambda$
   such that
   \[
      Y_\lambda^\mu
      = \sum_{m_1 = -l_1}^{l_1} \sum_{m_2 = -l_2}^{l_2}
      C_{l_1m_1l_2m_2}^{\lambda\mu} Y_{l_1}^{m_1} Y_{l_2}^{m_2},
   \]
   where $C_{l_1m_1l_2m_2}^{\lambda\mu}$ = 0 if $m_1+m_2 \neq \mu$.
\end{lemma}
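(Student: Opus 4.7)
The plan is to split the statement into two independent claims: (a) the constraint $C^{\lambda\mu}_{l_1 m_1 l_2 m_2} = 0$ when $m_1+m_2 \neq \mu$, which is a simple consequence of separation of variables, and (b) the existence of such an expansion, which is essentially the Clebsch--Gordan decomposition of a tensor product representation of $\mathrm{SO}(3)$ expressed in a concrete basis.

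For (a), I would start from the explicit form $Y_l^m(\hat\br) = P_l^m(\cos\theta)\, e^{im\varphi}$ recalled at the beginning of this appendix. The product satisfies $Y_{l_1}^{m_1}(\hat\br)\, Y_{l_2}^{m_2}(\hat\br) = P_{l_1}^{m_1}(\cos\theta) P_{l_2}^{m_2}(\cos\theta)\, e^{i(m_1+m_2)\varphi}$, so its $\varphi$-Fourier content is concentrated at the single frequency $m_1+m_2$. Since the family $\{e^{ik\varphi}\}_{k\in\Z}$ is linearly independent, any expansion of $Y_\lambda^\mu$ (which has $\varphi$-dependence $e^{i\mu\varphi}$) in terms of these products can only involve terms with $m_1+m_2=\mu$; all other coefficients must vanish.

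For (b), I would invoke representation theory. Let $\mathcal{H}_l := \mathrm{span}\{Y_l^m : -l\le m\le l\}$, which is a $(2l+1)$-dimensional subspace of $L^2(\mathbb{S}^2)$ that is invariant under rotations via~\eqref{eq:app:rot_Ylm}, and carries the unique (up to equivalence) irreducible representation of $\mathrm{SO}(3)$ of that dimension. The pointwise product space $\mathcal{H}_{l_1}\cdot\mathcal{H}_{l_2} := \mathrm{span}\{Y_{l_1}^{m_1}Y_{l_2}^{m_2}\}$ is also rotation-invariant and is a quotient of the tensor product representation $\mathcal{H}_{l_1}\otimes\mathcal{H}_{l_2}$. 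The classical Clebsch--Gordan decomposition states
\[
    \mathcal{H}_{l_1}\otimes \mathcal{H}_{l_2} \;\cong\; \bigoplus_{\lambda=|l_1-l_2|}^{l_1+l_2}\mathcal{H}_\lambda,
\]
and a direct dimension count $\sum_{\lambda=|l_1-l_2|}^{l_1+l_2}(2\lambda+1)=(2l_1+1)(2l_2+1)$ shows that the product map is injective, so $\mathcal{H}_{l_1}\cdot\mathcal{H}_{l_2}$ contains an isomorphic copy of each $\mathcal{H}_\lambda$ for $|l_1-l_2|\le\lambda\le l_1+l_2$. Because each $\mathcal{H}_\lambda$ appears in $L^2(\mathbb{S}^2)$ with multiplicity one and is spanned by the $Y_\lambda^\mu$, the inclusion $Y_\lambda^\mu \in \mathcal{H}_{l_1}\cdot\mathcal{H}_{l_2}$ follows, yielding an expansion of the required form. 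Intersecting with (a) restricts the sum to $m_1+m_2=\mu$. Finally, reality of the coefficients under the Condon--Shortley phase convention for $Y_l^m$ (implicit in the normalisation chosen) can be deduced by building the highest-weight vector of $\mathcal{H}_{l_1+l_2}$ as a real multiple of $Y_{l_1}^{l_1}Y_{l_2}^{l_2}$ and generating the rest by the lowering operator, which has real matrix entries in the $\{Y_l^m\}$ basis; the remaining $\mathcal{H}_\lambda$ are obtained inductively from the real orthogonal complement.

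The main obstacle is the invocation of the Clebsch--Gordan branching $\mathcal{H}_{l_1}\otimes \mathcal{H}_{l_2}\cong \bigoplus_\lambda \mathcal{H}_\lambda$; an entirely self-contained proof requires either a character calculation or an explicit construction of highest weight vectors and repeated application of the lowering operator $L_-$. For a paper aimed at a numerical analysis audience, I would simply cite a standard reference such as \cite{Brink68,Varshalovich88} for this decomposition rather than reproducing it, and use the remainder of the argument (separation of variables plus multiplicity-one embedding into $L^2(\mathbb{S}^2)$) to derive the stated expansion.
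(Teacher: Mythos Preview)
The paper does not prove this lemma: it is presented as a definitional statement for the Clebsch--Gordan coefficients, and the surrounding standard identities are simply attributed to references such as \cite{Byerly,Yutsis1962-yc}. Your proposal therefore goes well beyond what the paper itself supplies.

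Your argument for (a) via the $\varphi$-Fourier content is correct and is the standard one.

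For (b), however, the step ``a direct dimension count \dots\ shows that the product map is injective'' is not valid, and the gap is not cosmetic. When $l_1=l_2$ the pointwise-multiplication map $\mathcal{H}_{l_1}\otimes\mathcal{H}_{l_2}\to L^2(\mathbb{S}^2)$ annihilates the entire antisymmetric part, so it is never injective in that case. More generally, by parity $Y_{l_1}^{m_1}Y_{l_2}^{m_2}$ has sign $(-1)^{l_1+l_2}$ under $\hat\br\mapsto-\hat\br$, so $\mathcal{H}_\lambda$ cannot lie in $\mathcal{H}_{l_1}\cdot\mathcal{H}_{l_2}$ when $l_1+l_2+\lambda$ is odd. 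Read literally as an identity between functions on $\mathbb{S}^2$, the displayed formula therefore fails for those $\lambda$; this is exactly why the separate product formula \eqref{eq:app:prod_Ylm} carries the extra factor $C_{l_10l_20}^{\lambda 0}$, which enforces the parity selection rule.

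What the lemma is really encoding---and how the coefficients are actually used in the paper, e.g.\ in \eqref{eq:app:prod_D} and the recursion \eqref{eq:app:Dlkm-recursion}---is the abstract Clebsch--Gordan change of basis in the tensor product $\mathcal{H}_{l_1}\otimes\mathcal{H}_{l_2}$, where each $\mathcal{H}_\lambda$ with $|l_1-l_2|\le\lambda\le l_1+l_2$ genuinely occurs once. Your representation-theoretic outline proves exactly that statement; the fix is to keep the argument at the level of the tensor product (coupled versus uncoupled basis) and not pass through pointwise multiplication. Your suggestion to cite \cite{Brink68,Varshalovich88} for the decomposition is then entirely in line with the paper's own treatment.
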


It is convenient, and common \revii{practice}, to extend the definition of the Wigner matrices $D^l_{\mu m}$ and Clebsch-Gordan coefficients  $C_{l_1m_1l_2m_2}^{\lambda \mu}$ to all indices $l, \mu, m; l_i, m_i, \lambda, \mu$ by zero, and to simply write, e.g., $\sum_{\lambda, \mu}$, encoding the finite-ness of the sum in the coefficients.

We proceed to summarise further standard results related to the above definitions that we employ throughout this article, to be found, e.g., in \cite{Byerly,Yutsis1962-yc}.

\begin{lemma} \label{th:SH_properties}
The following identities are true:
\begin{align}
    \label{eq:app:inv_Ylm}
    Y_l^m(-\br) &= (-1)^l Y_l^m(\br),
    \\
    \label{eq:app:prod_D}
    \revii{D_{\mu_1 m_1}^l D_{\mu_2 m_2}^{l_2}}
    &=
    \sum_{\lambda = |l_1-l_2|}^{l_1+l_2} C_{l_1 m_1 l_2 m_2}^{\lambda (m_1+m_2)} C_{l_1 \mu_1 l_2 \mu_2}^{\lambda (\mu_1+\mu_2)} D_{(m_1+m_2)(\mu_1+\mu_2)}^\lambda
    \\
      \label{eq:app:wigner_D_matrix}
      \displaystyle\int_{SO(3)} \revii{D_{\mu m}^{l}(Q)} \; dQ & = \delta_{l0} \delta_{m0} \delta_{\mu 0} \\
    \label{eq:app:wigner-D-orth}
    \int_{{\rm SO}(3)} \revii{D_{\mu_1,m_1}^{l_1}(Q) D_{\mu_2,m_2}^{l_2}(Q)} dQ &=
    \frac{8 \pi^2}{2l_1+1} (-1)^{m_1-\mu_1} \delta_{m_1+m_2}\delta_{\mu_1+\mu_2}\delta_{l_1,l_2} \\
    \label{eq:app:prod_Ylm}
    Y_{\ell_1}^{m_1}Y_{\ell_2}^{m_2}
    & = \sum_{\lambda, \mu}
    \tilde{C}_{l_1m_1l_2m_2}^{\lambda \mu} Y_\lambda^\mu,
\end{align}
with $\tilde{C}_{l_1m_1l_2m_2}^{\lambda \mu} :=
\sqrt{\frac{(2 l_1 + 1) (2 l_2 + 1)}{4 \pi (2 \lambda + 1)}}
    C_{l_10l_20}^{\lambda 0} C_{l_1m_1l_2m_2}^{\lambda \mu}$.
\end{lemma}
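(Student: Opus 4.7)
All five identities are classical consequences of the representation theory of the compact group $\mathrm{SO}(3)$ and its subgroup structure, together with the fact that, for each $l\in\N$, the functions $\{Y_l^m\}_{m=-l}^l$ form an orthonormal basis of an irreducible $\mathrm{SO}(3)$-representation whose matrix coefficients in that basis are, by definition \eqref{eq:app:rot_Ylm}, the Wigner D-matrices $D^l_{\mu m}(Q)$. The plan is therefore to prove each identity by reducing it to either (a) the explicit polar-coordinate formula for $Y_l^m$, (b) the defining property \eqref{eq:app:rot_Ylm} and linearity, or (c) Schur's orthogonality relations for matrix coefficients of irreducible unitary representations of a compact group.

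For \eqref{eq:app:inv_Ylm}, I would substitute $\br\mapsto -\br$, which in polar coordinates sends $(\theta,\varphi)\mapsto(\pi-\theta,\varphi+\pi)$. From the explicit formula $Y_l^m(\hat\br)=P_l^m(\cos\theta)e^{im\varphi}$ and the standard parity relations $e^{im(\varphi+\pi)}=(-1)^m e^{im\varphi}$ and $P_l^m(-x)=(-1)^{l+m}P_l^m(x)$ (read off from the Rodrigues formula), the combined factor is $(-1)^{l+2m}=(-1)^l$. For \eqref{eq:app:prod_Ylm}, the key observation is that the pointwise product $Y_{l_1}^{m_1}Y_{l_2}^{m_2}$, restricted to $\bbS^2$, is a spherical function of total degree at most $l_1+l_2$ and its $\mathrm{SO}(3)$-transformation law (inherited from \eqref{eq:app:rot_Ylm} applied to each factor) shows that it decomposes as a sum of spherical harmonics with $\lambda\in\{|l_1-l_2|,\dots,l_1+l_2\}$ and $\mu=m_1+m_2$. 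Orthogonal projection onto $Y_\lambda^\mu$ in $L^2(\bbS^2)$ defines the Clebsch--Gordan coefficients, and the normalization factor $\sqrt{(2l_1+1)(2l_2+1)/(4\pi(2\lambda+1))}\,C_{l_10l_20}^{\lambda 0}$ arises by evaluating both sides at the north pole and using $Y_l^0(\be_z)=\sqrt{(2l+1)/(4\pi)}$.

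Identity \eqref{eq:app:prod_D} is the dual statement, written at the level of matrix coefficients: the tensor product representation $D^{l_1}\otimes D^{l_2}$ decomposes into $\bigoplus_{\lambda=|l_1-l_2|}^{l_1+l_2} D^\lambda$, and the change of basis is precisely the Clebsch--Gordan transform. I would prove it by applying \eqref{eq:app:rot_Ylm} to the product $Y_{l_1}^{m_1}(Q\br)Y_{l_2}^{m_2}(Q\br)$ in two ways, expanding either as a double sum via \eqref{eq:app:rot_Ylm} or by first using \eqref{eq:app:prod_Ylm} and then \eqref{eq:app:rot_Ylm}, and matching coefficients in the orthogonal basis. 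For \eqref{eq:app:wigner_D_matrix} and \eqref{eq:app:wigner-D-orth} I would invoke Schur's orthogonality: for a compact group with Haar measure $dQ$, the matrix coefficients of inequivalent irreducible unitary representations are orthogonal in $L^2$, and those of a single irrep $\pi^l$ satisfy $\int \pi^{l}_{ij}(Q)\overline{\pi^{l}_{kn}(Q)}\,dQ = \frac{1}{2l+1}\delta_{ik}\delta_{jn}$. Identity \eqref{eq:app:wigner_D_matrix} is the special case $(l_1,l_2)=(l,0)$ with $\pi^0\equiv 1$ (the trivial representation), which forces $l=m=\mu=0$. Identity \eqref{eq:app:wigner-D-orth} follows from the same relation combined with the identity $\overline{D^l_{\mu m}(Q)}=(-1)^{m-\mu}D^l_{-\mu,-m}(Q)$, which accounts for the signs and Kronecker deltas on $m_1+m_2$ and $\mu_1+\mu_2$.

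The main obstacle is book-keeping of normalization: the literature uses several conventions for $Y_l^m$ (Condon--Shortley phases, $4\pi$ factors) and for the Haar measure on $\mathrm{SO}(3)$ (whether total mass is $1$ or $8\pi^2$), and one must pick a single convention consistent with the paper's definition of $Y_l^m$ and of $\int_{\mathrm{SO}(3)} dQ$ in order for the constants in \eqref{eq:app:wigner-D-orth} and \eqref{eq:app:prod_Ylm} to come out correctly. I would fix these by computing one normalizing integral by hand (e.g.\ $\int_{\mathrm{SO}(3)}|D^l_{00}(Q)|^2\,dQ$ using the Euler-angle parameterisation) and propagating the factor through; the algebraic content of the identities themselves is then immediate from the representation-theoretic structure sketched above, and all five statements can also be cited directly from standard references such as \cite{Byerly,Yutsis1962-yc,Varshalovich88}.
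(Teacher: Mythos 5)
The paper offers no proof of this lemma at all: it is introduced with the sentence that it ``summarises further standard results'' and is backed only by citations to the classical literature (Byerly, Yutsis et al.), so there is no argument of the paper's to compare against line by line. Your sketch is a correct and essentially complete outline of how these identities are actually established: parity of $Y_l^m$ from the explicit polar formula, the Clebsch--Gordan (Gaunt) expansion of $Y_{l_1}^{m_1}Y_{l_2}^{m_2}$ via the decomposition of the tensor product and evaluation at the pole, the product formula \eqref{eq:app:prod_D} as the matrix-coefficient form of the same decomposition, and \eqref{eq:app:wigner_D_matrix}--\eqref{eq:app:wigner-D-orth} from Schur orthogonality together with $\overline{D^l_{\mu m}}=(-1)^{m-\mu}D^l_{-\mu,-m}$. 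Your closing caveat about conventions is not mere book-keeping anxiety but is exactly where care is needed in this paper: the appendix defines $Y_l^m$ \emph{without} the usual normalisation factor (``irrelevant for our purposes and is hence ignored'') and defines $\int_{\mathrm{SO}(3)}dQ$ with respect to the \emph{normalised} Haar measure, whereas the constants as printed in \eqref{eq:app:wigner-D-orth} (the $8\pi^2/(2l_1+1)$) and in $\tilde C$ in \eqref{eq:app:prod_Ylm} (the $4\pi$) presuppose the standard orthonormal $Y_l^m$ and the unnormalised Euler-angle measure of total mass $8\pi^2$; with the paper's own conventions the prefactor in \eqref{eq:app:wigner-D-orth} would be $1/(2l_1+1)$, consistent with \eqref{eq:app:wigner_D_matrix}. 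Your proposal of fixing the convention by computing one normalising integral and propagating it is precisely the right way to resolve this, and since only the structural content (which terms vanish, rank counting via $\delta_{L_N}$, etc.) is used downstream, the discrepancy is harmless for the paper's results. In short: the proposal is sound and, unlike the paper, self-contained; what the paper's approach buys is brevity by deferring to standard references, while yours buys an explicit derivation that exposes (and would repair) the convention mismatch.
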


\subsection{Numerically stable evaluation of $Y_l^m, \nabla Y_l^m$}
\label{sec:Ylm:numstab}
Our implementation of the $Y_l^m$ basis functions directly uses the methods proposed in \cite{Limpanuparb2014-tv}. However, unlike \cite{Limpanuparb2014-tv} we also need numerically stable gradients $\nabla Y_l^m$, which requires some extra care.

Following~\cite{Limpanuparb2014-tv} we use the following recursion to evaluate the associated Legendre functions:
\begin{equation}  \label{eq:recursion:Plm}
\begin{split}
    P_0^0 &= \sqrt{\frac{1}{2\pi}} \\
    P_1^0 &= \sqrt{3} \cos \theta P_0^0 \\
    P_1^1 &= - \frac{3}{2} \sin\theta P_0^0 \\
    P_l^m &= A_l^m \big( \cos\theta P_{l-1}^m + B_l^m P_{l-2}^m \big), \quad m = 0, \dots, l-2 \\
    P_l^{l-1} &= C_l^{l-1} \cos\theta P_{l-1}^{l-1} \\
    P_l^l &= C_l^l \sin\theta P_{l-1}^{l-1}
\end{split}
\end{equation}
The constants $A_l^m, B_l^m, C_l^m$ are precomputed.

One numerical issue arises that is not addressed in \cite{Limpanuparb2014-tv}: the numerically stable evaluation of gradients $\nabla Y_l^m$ must be done with care due to the removable singularity of the analytic expression at the poles where $\sin\theta = 0$:
\[
\nabla Y_l^m = \frac{im P_l^m e^{im\varphi}}{r \sin \theta}
    \begin{bmatrix}
        - \sin\varphi  \\
        \cos\varphi  \\
        0
    \end{bmatrix}
    + \frac{\partial_\theta P_l^m e^{im \varphi}}{r}
    \begin{bmatrix}
        \cos\varphi \cos\theta \\
        \sin\varphi \cos\theta \\
        -\sin\theta
    \end{bmatrix}
\]
The $(\sin\theta)^{-1}$ singularity can be removed as follows:

{\it Case 1: } If $m = 0$ then $P_l^m$ does not depend on $\varphi$ hence the first term with the singularity just does not occur.

{\it Case 2: } If $m \neq 0$ then the recursions shown above show that $P_l^m$ is divisible by $\sin\theta$. Thus, for evaluating $\nabla Y_l^m$ we evaluate $P_l^m / \sin\theta$ instead of $P_l^m$ itself and this then immediately leads to an evaluation of $\nabla Y_l^m$ that is numerically stable as $\sin\theta \to 0$.

\subsection{Recursion for the $\revii{D^\bl_{\bmu\bbm}(Q)}$ coefficients}
\label{sec:app:Dl_recursion}
Recall the definition of $\revii{D^\bl_{\bmu\bbm}(Q)}$ in~\eqref{eq:def_Dmatrix}
\begin{equation} \label{eq:defn_calDlmmu}
    D^\bl_{\bmu\bbm}(Q) :=
        \prod_{\alpha = 1}^N D_{\mu_\alpha m_\alpha}^{l_\alpha}(Q),
\end{equation}
where $D^{l_\alpha}_{\mu_\alpha m_\alpha}(Q)$ are the Wigner D-matrices.

We will compute the $D^\bl_{\bmu\bbm}(Q)$ recursively. Assume therefore that we have computed all $D^\bl_{\bmu\bbm}(Q)$ with $\bl \in \N^{N-1}$ and now let $\bl \in \N^{N}, \bbm, \bmu \in \calM_{\bl}$. Applying the product formula~\eqref{eq:app:prod_D} for the D-matrices we obtain
\begin{align*}
   D^{\bl}_{\bmu\bbm}(Q)
   &=
   \bigg\{
      D_{\mu_1 m_1}^{l_1}(Q)
      D_{\mu_2 m_2}^{l_2}(Q)
   \bigg\}
   \cdot
   \prod_{\alpha = 3}^{N} D_{\mu_\alpha m_\alpha}^{l_\alpha}(Q)
       \\
   &=
      \bigg\{ \sum_{L =  |l_1-l_2|}^{l_1+l_2}
           C_{l_1m_1l_2 m_2}^{L (m_1+m_2)}
           C_{l_1\mu_1l_2 \mu_2}^{L (\mu_1+\mu_2)}
           D_{\mu_1+\mu_2,m_1+m_2}^{L}(Q)
       \bigg\}
       \cdot
    \prod_{\alpha = 3}^{N} D_{\mu_\alpha m_\alpha}^{l_\alpha}(Q).
\end{align*}
That is, we have the recursion formula
\begin{equation} \label{eq:app:Dlkm-recursion}
    D_{\bmu\bbm}^{\bl}(Q)
    = \sum_{L =  |l_1-l_2|}^{l_1+l_2}
    C_{l_1m_1l_2 m_2}^{L (m_1+m_2)}
    C_{l_1\mu_1l_2 \mu_2}^{L (\mu_1+\mu_2)}
    D_{\bmu'\bbm'}^{(L,\bl')}(Q),
\end{equation}
where
\begin{align*}
    \bmu' &=  (\mu_1+\mu_2, \mu_3, \dots, \mu_{N}), \\
    \bbm' &=  (m_1+m_2, m_3, \dots, m_N), \\
    (L,\bl') &=  (L, l_3, \dots, l_N).
\end{align*}
Using this recursion formula $(N-1)$ times, we obtain
\begin{align}
   \label{eq:app:recursion_Dmu_full}
   D^\bl_{\bmu\bbm}(Q)
   =
   \sum_{L_2 = |l_1-l_2|}^{l_1+l_2} \sum_{L_3 = |L_2-l_3|}^{L_2+l_3}\hspace{-.2cm} \ldots
   \hspace{-.2cm}
   \sum_{L_N = |L_{N-1}-l_N|}^{L_{N+1}+l_N}  &
   C_{l_1m_1l_2m_2}^{L_2M_2}C_{L_2M_2l_3m_3}^{L_3M_3} \ldots
   C_{L_{N-1}M_{N-1}l_Nm_N}^{L_NM_N} \\
   & \hspace{-1cm} C_{l_1\mu_1l_2\mu_2}^{L_2\tilde M_2}C_{L_2\tilde M_2l_3\mu_3}^{L_3\tilde M_3} \ldots
   C_{L_{N-1}\tilde M_{N-1}l_N\mu_N}^{L_N\tilde M_N}
   D_{\tilde M_NM_N}^{L_N}(Q), \nonumber
\end{align}
where $\tilde M_i = \sum_{j=1}^i \mu_i$, $M_i = \sum_{j=1}^i m_i$.

\revii{
Note that the integrated coefficients $\revii{\intD^\bl_{\bmu\bbm}}$ defined in~\eqref{eq:defn_intD} can be computed similarly, via the following recursion formula,
\begin{equation}
        \revii{\intD^\bl_{\bmu\bbm}}
    =
    \sum_{L = |l_1-l_2|}^{l_1+l_2}
    C_{l_1m_1l_2 m_2}^{L (m_1+m_2)}
    C_{l_1\mu_1l_2 \mu_2}^{L (\mu_1+\mu_2)}
    \revii{\intD_{\bmu'\bbm'}^{(L,\bl')}},
    \label{eq:Drecursion_appendix}
\end{equation}
with $\bbm',\bmu'$ and $\bl'$ as above.
}

\subsection{Dimensions of the RI and RPI bases}
\label{sec:app:dim_rot_inv}
In Table~\ref{tbl:more_dimensions}   we show the dimension of the space ${\rm span}~\big\{ b_{\bl\bbm} \,|\, \bbm \in \calM_\bl^0 \big\}$ for orders $N = 4, 5$  and different $\bl$, as well as the dimension of the corresponding RPI bases for different $\bn$. (For $N \leq 3$ the dimension is always zero or one.)

\begin{table}
    \begin{center} \footnotesize
\begin{tabular}{cccc}
\toprule
$ \bl $ & \,\#RI\, & $ \bn $ & \#RPI \\
\midrule[0.075em]
\multirow{5}{*}{\lbrak1, 1, 1, 1\rbrak}
                 & \multirow{5}{*}{3}  &  \lbrak$1, 1, 1, 1$\rbrak &  1 \\
  &  &  \lbrak$1, 1, 1, 2$\rbrak &  1 \\
  &  &  \lbrak$1, 1, 2, 2$\rbrak &  2 \\
  &  &  \lbrak$1, 1, 2, 3$\rbrak &  2 \\
  &  &  \lbrak$1, 2, 3, 4$\rbrak &  3 \\
\midrule
\multirow{1}{*}{\lbrak1, 1, 1, 3\rbrak}
                 & \multirow{1}{*}{1}  &  \lbrak$\_, \_, \_, \_ $\rbrak &  1 \\
\midrule
\multirow{4}{*}{\lbrak1, 1, 2, 2\rbrak}
                 & \multirow{4}{*}{3}  &  \lbrak$1, 1, 1', 1'$\rbrak &  2 \\
  &  &   \lbrak$1, 1, 1', 2'$\rbrak&  2 \\
  &  &  \lbrak$1, 2, 1', 1'$\rbrak &  2 \\
  &  &  \lbrak$1, 2, 1', 2'$\rbrak &  3 \\
\midrule
\multirow{1}{*}{\lbrak1, 1, 2, 4\rbrak}
                 & \multirow{1}{*}{1}  &    \lbrak$\_, \_, \_, \_ $\rbrak &  1 \\
\midrule
\multirow{1}{*}{\lbrak1, 1, 3, 5\rbrak}
                 & \multirow{1}{*}{1}  &  \lbrak$\_, \_, \_, \_ $\rbrak &  1 \\
\midrule
\multirow{2}{*}{\lbrak1, 2, 2, 3\rbrak}
                 & \multirow{2}{*}{3}  &  \lbrak$\_, 1, 1, \_ $\rbrak &  2 \\
  &  &  \lbrak$\_, 1, 2, \_ $\rbrak &  3 \\
\midrule
\multirow{3}{*}{\lbrak1, 3, 3, 3\rbrak}
                 & \multirow{3}{*}{3}  &  \lbrak$\_, 1, 1, 1 $\rbrak &  1 \\
  &  &  \lbrak$\_, 1, 1, 2 $\rbrak &  2 \\
  &  &  \lbrak$\_, 1, 2, 3 $\rbrak &  3 \\
\midrule
\multirow{5}{*}{\lbrak2, 2, 2, 2\rbrak}
                 & \multirow{5}{*}{5}  &   \lbrak$1, 1, 1, 1$\rbrak &  1 \\
  &  &  \lbrak$1, 1, 1, 2$\rbrak &  1 \\
  &  &  \lbrak$1, 1, 2, 2$\rbrak &  3 \\
  &  &  \lbrak$1, 1, 2, 3$\rbrak &  3 \\
  &  &  \lbrak$1, 2, 3, 4$\rbrak &  3 \\
\midrule
\multirow{3}{*}{\lbrak2, 2, 3, 3\rbrak}
                 & \multirow{3}{*}{5}  & \lbrak$1, 1, 1', 1'$\rbrak &  3 \\
  &  &  \lbrak$1, 1, 1', 2'$\rbrak &  3 \\
  &  &  \lbrak$1, 2, 1', 1'$\rbrak &  3 \\
  &  &  \lbrak$1, 2, 1', 2'$\rbrak &  5 \\
\bottomrule
\end{tabular}
\begin{tabular}{cccc}
\toprule
$ \bl $ & \,\#RI\, & $ \bn $ & \#RPI \\
\midrule[0.075em]
\multirow{5}{*}{\lbrak1, 1, 1, 1, 2\rbrak}
                 & \multirow{5}{*}{6}  &  \lbrak$1, 1, 1, 1, \_ $\rbrak &  1 \\
  &  &  \lbrak$1, 1, 1, 2, \_ $\rbrak &  2 \\
  &  & \lbrak$1, 1, 2, 2, \_ $\rbrak &  3 \\
  &  &  \lbrak$1, 1, 2, 3, \_ $\rbrak &  4 \\
  &  &  \lbrak$1, 2, 3, 4, \_ $\rbrak &  6 \\
\midrule
\multirow{1}{*}{\lbrak1, 1, 1, 1, 4\rbrak}
                 & \multirow{1}{*}{1}  &  \lbrak$\_, \_, \_, \_, \_ $\rbrak &  1 \\
\midrule
\multirow{3}{*}{\lbrak1, 1, 1, 2, 3\rbrak}
                 & \multirow{3}{*}{6}  &  \lbrak$1, 1, 1, \_, \_ $\rbrak &  2 \\
  &  & \lbrak$1, 1, 2, \_, \_ $\rbrak &  4 \\
  &  & \lbrak$1, 2, 3, \_, \_ $\rbrak &  6 \\
\midrule
\multirow{1}{*}{\lbrak1, 1, 1, 2, 5\rbrak}
                 & \multirow{1}{*}{1}  &   \lbrak$\_, \_, \_, \_, \_ $\rbrak &  1 \\
\midrule
\multirow{6}{*}{\lbrak1, 1, 2, 2, 2\rbrak}
                 & \multirow{6}{*}{9}  &  \lbrak$1,1, 1',1',1'$\rbrak &  2 \\
  &  &  \lbrak$1,1, 1',1',2'$\rbrak &  4 \\
  &  &  \lbrak$1,1, 1',2',3'$\rbrak &  6 \\
  &  &  \lbrak$1,2, 1',1',1'$\rbrak &  2 \\
  &  &  \lbrak$1,2, 1',1',2'$\rbrak &  5 \\
  &  &   \lbrak$1,2, 1',2',3'$\rbrak &  9 \\
\midrule
\multirow{4}{*}{\lbrak1, 1, 2, 2, 4\rbrak}
                 & \multirow{4}{*}{6}  & \lbrak$1,1, 1',1',\_ $\rbrak &  3 \\
  &  &  \lbrak$1,1, 1',2',\_ $\rbrak &  4 \\
  &  &  \lbrak$1,2, 1',1',\_ $\rbrak &  4 \\
  &  & \lbrak$1,2, 1',2',\_ $\rbrak &  6 \\
\midrule
\multirow{4}{*}{\lbrak1, 1, 2, 3, 3\rbrak}
                 & \multirow{4}{*}{9}  &  \lbrak$1,1,\_ , 1',1'$\rbrak &  4 \\
  &  & \lbrak$1,1,\_ , 1',2'$\rbrak &  6 \\
  &  &  \lbrak$1,2,\_ , 1',1'$\rbrak &  5 \\
  &  & \lbrak$1,2,\_ , 1',2'$\rbrak &  9 \\
\midrule
\multirow{3}{*}{\lbrak1, 2, 2, 2, 3\rbrak}
                 & \multirow{3}{*}{12}  &  \lbrak$\_,1,1, 1,\_$\rbrak &  3 \\
  &  &  \lbrak$\_,1,1, 2,\_$\rbrak &  7 \\
  &  & \lbrak$\_,1,2, 3,\_$\rbrak &  7 \\
\bottomrule
\end{tabular}
    \end{center}
    \label{tbl:more_dimensions}
    \vspace{2mm}
    \caption{Dimensions of rotation-invariant (RI) and rotation- and permutation-invariant (RPI) basis groups; cf. Table~\ref{tab:RIvsRPI_small} for additional details. In the third column, the numbers have to be understand as indices, e.g. $1 \neq 2 \neq 3$, and $1 = 1'$ or $1 \neq 1'$. 
    The $\_$ indicates a number that can be taken arbitrarily.}
\end{table}

\section{Related Bases and Descriptors}
\label{app:otherbases}
It is interesting to compare the construction presented in this paper with alternative representations of symmetric functions. Here, we focus on a small selection of bases and descriptors based also on symmetric polynomials. While the polynomial spaces spanned by these bases and descriptors are similar (often identical), the representation of the space is crucial, as this has influence on numerical stability, conditioning of the least-square system, as well as the computational cost of evaluating the potential.

We also briefly review the SOAP descriptor~\cite{Bartok2010-mv} to put it into the context of the present paper. Many other descriptors related to SOAP exist of course, which are listed in the Introduction, and which are also briefly discussed in the context of the ACE construction in~\cite{Drautz2019-er}. Note that we do not add the invariant wavelet scattering transform~\cite{Eickenberg-17} to the comparison because the representation is not atom-centered, although the angular component of the descriptor also relies on spherical harmonics.

\subsection{Atom-centred permutation-invariant potentials (aPIPs)}
The aPIPs construction~\cite{2019-regapips1} applies the invariant theory techniques~\cite{Derksen2015-km} of permutation-invariant potentials (PIPs) pioneered by Bowman and Braams~\cite{Braams2009-wi} to obtain a basis of many-body potentials (cf. \S~\ref{sec:pes}) constructed from permutation invariant polynomials.
There is significant freedom in this construction, but some specific {\em natural} choices lead to a basis that spans the same space of symmetric polynomials that we constructed in the present paper.

The aPIP construction begins with a choice of rotation-invariant (RI) coordinates, which can be e.g. distance variables (typical for PIPs~\cite{Braams2009-wi}) or distance and angles variables~\cite{2019-regapips1},
\[
    X := \big( (\xi(r_j))_{j=1}^N, (\cos \theta_{jj'})_{j < j'} \big),
\]
where $\xi$ is a distance transform as in \S~\ref{sec:radial}.
We focus on the latter choice; for pure distance coordinates the following discussion does not apply.

The number of RI coordinates is $d_N := \frac{N (N+1)}{2}$, i.e., $X \in \R^{d_N}$.
A permutation of the atom indices induces a permutation on the RI coordinates, and all these permutations form a subgroup of $S_{d_N}$ (strict subgroup for $N \ge 4$).
Invariant theory states that for each $N$, there exist polynomials satisfying Assumption~\ref{as:VN}(iii,iv) denoted by $I_1,I_2,\ldots,I_{d_N}$ and $J_1,\ldots,J_M$ (with $M \ge 0$) called primary and secondary invariants such that any polynomial $P_N(X)$ can be uniquely written as
\[
    V_N(\br_1, \dots, \br_{N}) = P_N(X) =
    \sum_{m = 0}^{M} J_m P_m(I_1,\ldots,I_{d_N}),
\]
where $I_n = I_n(X), J_m = J_m(X), J_0 \equiv 1$ and the $P_m$ are multivariate polynomials in $I_1, \dots, I_{d_N}$. The total interaction order-$\calN$ potential energy surface is then given by
\[
    E\big(\{\br_j\}_{j=1}^J\big) =
    \sum_{N = 0}^{\calN}
    \sum_{j_1 < \ldots < j_N} V_N(\br_{j_1}, \dots, \br_{j_N}).
\]
\noindent {\bf Remarks:}
\begin{enumerate}
    \item With suitable choice of basis of the $P_m$ polynomials, a finite aPIP basis spans the same space as the analogous ACE  basis. This can be see from the {\em addition theorem}: for $\br_1, \br_2 \in \bbS^2$, and the Legendre polynomial $P_l$ we can write
    \[
        P_l(\br_1 \cdot \br_2)  =
        C_l \sum_{m = -l}^l Y_l^m(\br_1) Y_l^{-m}(\br_2).
    \]
    Thus any multivariate polynomial in the variables $(\cos\theta_{jj'})_{j < j'}$ can be written as a linear combination of spherical harmonics.

    \item A first downside of the aPIPs representation is that it appears to be more difficult to construct an orthogonal basis than in the spherical harmonics construction.

    \item The second downside of the aPIPs representation is its computational cost. Even if evaluating a single order-$N$ term $V_N(\br_1, \dots, \br_N)$ is of similar or lower cost than for a density-based representation (this is difficult to analyse since the invariants are generated by a computer algebra systems) then the $\sum_{j_1 < \ldots < j_N}$ has $\binom{J}{N}$ terms, which scales poorly with increasing $J, N$. However, for multiple species the cost of the density-based representations increases significantly while the cost of aPIPs remains the same. Furthermore, the poor scaling of aPIPs is primarily with the number of neighbours. For open structures (semiconductors, organic molecules), it may be more favourable than constructions (including the one detailed in this paper) based on the density projection.

    \item Finally, a practical barrier is  that invariants are computed with a computer algebra system, which appears impossible at present for $N \ge 5$. It is currently impossible, in this way, to construct a complete aPIPs basis for a 6-body (5 neighbours) potential where all atoms are of the same species. However, it should be possible to use the constructions presented in this paper to generate these invariants. As far as we are aware this has not yet been explored.
\end{enumerate}

\subsection{Moment Tensor Potential (MTP)}
\label{sec:mtps}
MTPs \cite{Shapeev2016-pd} describe the local atomic environment using rotationally covariant {\em moment tensors}
\begin{equation}
    M_{\mu, \nu} = \sum_j f_{\mu}(r_{j}) r^{\nu}_{j} \underbrace{\hat{\br}_{j} \otimes \hat{\br}_{j} \otimes \dots \otimes \hat{\br}_{j}}_{\nu \text{ times}},
\end{equation}
which are the projection of the atomic density onto a basis of tensors.

Moment tensors are contracted to rotationally invariant scalars which are taken as the basis functions in a linear expansion, the moment tensor potential, e.g.,
\[
    M_{\mu, \nu} \cdot M_{\mu', \nu} =
    \sum_{j, j'} f_{\mu}(r_{j}) f_{\mu'}(r_{j'}) (\hat{\br}_j \cdot \hat{\br}_{j'})^\nu,
\]
but much more general contractions of multiple tensors are possible; cf.~\cite{Shapeev2016-pd}. The contraction of the moment tensors to basis functions is systematic and it is shown in~\cite{Shapeev2016-pd} that the basis functions span the space of all symmetric polynomials.
However, a similar issue occurs as in \S~\ref{sec:symm:rot+perm} that combining rotation and permutation invariance leads to linear dependence which must again be resolved numerically. Our impression is that at this stage spherical harmonics (i.e., ACE) have an advantage since the reduction to a basis can be performed in small blocks, possibly even symbolically.

On the other hand, we note that the $L^0$-regularisation strategy used for MTPs~\cite{Shapeev2016-pd} removes or at least diminishes the need for linear independence.

Alternatively, to construct a {\em basis} for the MTPs we may proceed as follows: The tensor product $\hat{\br} \otimes \dots \otimes \hat{\br}$ ($\nu$  times) can be expanded in spherical harmonics (see \cite[Appendix C]{Drautz20} for the details)
\begin{equation*}
\hat{r}_{n_1}  \hat{r}_{n_2} \dots \hat{r}_{n_{\nu}} = \sum_{l=0}^{\nu} \sum_{m=-l}^l X_{n_1 n_2 \dots n_{\nu}}^{lm} Y^m_l(\hat{\br}) \,,
\end{equation*}
with $n_i = x, y, z$ and the transformation matrix $X$ is obtained from generalized Clebsch-Gordan coefficients; cf.~\S~\ref{sec:rotinv:Drautz}.

Note that the number of spherical harmonics on the right hand side is $(\nu+1)^2$ while the number of matrix elements on the left hand side is $3^{\nu}$, that is, the expansion in spherical harmonics provides a sparse representation of the moment tensors.
We emphasize, however, that the MTP implementation does not store the full tensors $M_{\mu,\nu}$ either but uses a sparse representation; see the paragraph {\em Precomputation} in~\cite[\S~4.1]{Shapeev2016-pd}. It is therefore difficult without a more detailed study to compare the relative efficiency of the two approaches.

To make further contact to ACE, we define radial functions as
\begin{equation*}
P_{\mu \nu}(r_{j})  = f_{\mu}(r_{j}) r_{j}^{\nu} \,,
\end{equation*}
such that \eqref{eq:proj:defn_Anlm} reads
\begin{equation*}
A_{\mu \nu l m} = \sum_j P_{\mu \nu} (r_{j}) Y_l^m(\hat{\br}_j) \,,
\end{equation*}
and the moment tensors are written as
\begin{equation}
M_{\mu,\nu} = \sum_{l=0}^{\nu} \sum_{m=-l}^l X_{n_1 n_2 \dots n_{\nu}}^{lm} A_{\mu \nu l m} \,. \label{eq:AtoM}
\end{equation}
It is now easy to see how a specific MTP parameterization may be expressed in terms of the bases constructed in the present paper: replace the moment tensors in the MTP by \eqref{eq:AtoM} and order the resulting expansion according to powers of $A$. 

In summary, we see that suitably chosen finite sets of MTP and ACE basis functions span the same space, and that an explicit connection can be made. This connection moreover implies that theoretical results can be transferred; for example Shapeev's completeness result~\cite{Shapeev2016-pd} applies to the ACE basis while our result, under somewhat different assumptions, applies to MTPs.

\subsection{SOAP}
\def\calX{\mathcal{X}}
For nonlinear approximation schemes, such as artificial neural networks or Gaussian process regression, one usually encodes the symmetries of the target function (in our case, the site potential $V$) in a {\em descriptor map}.
That is, given an atomic environment $R_\iota := \{\br_{\iota j}\}_j$ one defines a descriptor $\calX(R_\iota) \in \R^M$ for some $M > 0$, which is invariant under permutations and isometries and then aims to construct a {\em nonlinear function} $F(\calX(R_\iota)) \approx V(R_\iota)$.
A particularly succesful construction is the SOAP descriptor~\cite{Bartok13}, which in the notation of this paper can simply be defined as follows: we choose a finite sub-basis ${\bm B} \subset {\bm B}_1 \cup {\bm B}_2$ and define
\[
    \calX^{(2)}(R_\iota) := \big( B(R_\iota) \big)_{B \in {\bm B}}.
\]
(In \revii{practice}, as a form of regularisation, the SOAP descriptor ``mollifies'' the basis functions, replacing the delta-distributions $\delta(\cdot - \br_j)$ in \eqref{eq:proj:defn_Anlm} with Gaussians.)
This descriptor has been used with considerable success for the construction of a variety of highly accurate and transferable interatomic potentials~\cite{Bartok2018-fk,Szlachta2014-qr,Deringer:2017ea,Deringer:2018eh,MAvracic:2018da,Mocanu:2018ck,Bernstein:2019jw,Veit:2019gp,Fujikake:2018ce,Nguyen:2018kh,Rowe:2018ct,Dragoni:2018je,Bartok:2017hz}.

A significant {\em theoretical} shortcoming is that $\calX^{(2)}$ is not injective; that is, one can construct atomic environments $R, R'$ which are distinct up to symmetries, but $\calX^{(2)}(R) = \calX^{(2)}(R')$~\cite{Pozdnyakov2020-gc}. This observation is independent of the basis subset chosen but is due to the fact that only 3-body correlations are accounted for in $\calX^{(2)}$.
A natural question therefore is to ask whether this can be remedied by introducing higher order correlations: for $\calN > 2$, let ${\bm B} \subset \bigcup_{N = 1}^\calN {\bm B}_N$ be a finite but sufficiently rich ACE basis, then this gives a descriptor map
\[
    \calX^{(\calN)}(R_\iota) := \big( B(R_\iota) \big)_{B \in {\bm B}}.
\]
Thus, the symmetric polynomial construction can be used as a natural generalisation of the SOAP descriptor to arbitrary interaction orders. It is currently unknown whether this can entirely overcome the lack of injectivity of $\calX^{(2)}$ (and other descriptors based on 3-body correlations), but it is clear that choosing sufficiently large order $\calN$ this yields at least a possible path to ensuring that all configurations in a given training set become distinguishable.
We refer to~\cite{Pozdnyakov2020-gc} for a more detailed discussion of these issues.

\subsection{SNAP}
A variant of the SOAP descriptor is represented in hyperspherical harmonics. To this end a 3-dimensional vector $\pmb{r} = (x,y,z)$ of length $r$ is mapped onto a 4-dimensional unit sphere,
\begin{equation*}
\varphi = \arctan( x/y), \qquad
\theta = \arccos( z/r), \qquad
\omega = \pi r/r_0.
\end{equation*}
Applying this construction to the bispectrum~\cite{Bartok2010-mv,Bartok13}, thus introducing 4-body correlations, is the basis for the Spectral Neighbor Analysis Potential (SNAP) \cite{Thompson2015-af}. The transformation onto a 4-dimensional unit sphere allows one to write the hyperspherical harmonics $Z^n_{lm}(\omega, \theta, \varphi)$ in the form of a radial function times a spherical harmonics,
\begin{equation}
Z^n_{lm}(\omega, \theta, \varphi) =  \phi_{nlm}(\pmb{r}) = P_{nl}(r)  Y^{m}_{l} (\theta,\varphi) \,, \label{eq:AtoZ}
\end{equation}
with a suitably defined function $P_{nl}$ \cite{Drautz20}. Because SNAP also uses linear regression, one may therefore represent it exactly in the form of an ACE.

\bibliography{ace}
\bibliographystyle{plain}

\end{document}